\setlist[enumerate]{parsep=0pt, itemsep=0.8ex, topsep=0.8ex}
\tikzset{
  commutative diagrams/.cd,
  arrow style=tikz,
  diagrams={>=stealth', line width=0.7pt},
}
\title{Orderability and Dehn filling}
\author{Marc Culler}
\address{ Dept.~of Mathematics, Statistics, and 
      Computer Science, M/C 249 \\
          University of Illinois at Chicago\\
          851 S. Morgan St. \\ 
          Chicago, IL 60607-7045, USA
}
\email{culler@math.uic.edu}
\urladdr{http://math.uic.edu/~culler}
\author{Nathan M. Dunfield}
\address{ Dept.~of Mathematics, MC-382 \\
          University of Illinois \\
          1409 W. Green St. \\
          Urbana, IL 61801, USA
}
\email{nathan@dunfield.info}
\urladdr{http://dunfield.info}
\newcommand{\G}{G}
\newcommand{\GG}{{G^\star}}
\newcommand{\SLcharvar}[1]{X\left(#1\right)}
\newcommand{\augcharvar}[1]{\widehat{X}\left(#1\right)}
\newcommand{\augrepvar}[1]{\widehat{R}\left(#1\right)}
\newcommand{\auginc}{\hat\inc\mkern2mu}
\newcommand{\PSLRcharvar}[1]{\mathit{X}_{G}\left(#1\right)}
\newcommand{\RG}[1]{\mathit{R}_{G}\left(#1\right)}
\newcommand{\XG}[1]{\mathit{X}_{G}\left(#1\right)}
\newcommand{\RGPE}[1]{\mathit{PE}_{G}\left(#1\right)}
\newcommand{\RGlift}[1]{\mathit{PE}^{\mathit{lift}}_{G}\left(#1\right)}
\newcommand{\RGtil}[1]{\mathit{R}_{\Gtil}\left(#1\right)}
\newcommand{\RGtilPE}[1]{\mathit{PE}_{\Gtil}\left(#1\right)}
\newcommand{\GC}{G_\C}
\newcommand{\slrhoplus}{\mathfrak{sl}_2(\C)_{\rho^+}}
\newcommand{\sslash}{/\mkern-3mu/}
\newcommand{\inc}{\iota}
\newcommand{\projsp}{\P}
\newcommand{\Pone}{\P^1(\C)}
\newcommand{\Cunits}{\C^\times}
\newcommand{\Runits}{\R^\times}
\newcommand{\Csquare}{\Cunits\times\Cunits}
\DeclareMathOperator{\trans}{trans}
\newcommand{\TEL}[1]{\mathit{EL}_\Gtil\left(#1\right)}
\newcommand{\SymTEL}[1]{D_\infty\left(#1\right)}
\newcommand{\TELquo}[1]{\mathit{BL}_\Gtil\left(#1\right)}
\newcommand{\rhohyp}{\rho_{\mathit{hyp}}}
\newcommand{\euler}[1]{\mathit{Euler}\left(#1\right)}
\newcommand{\Honefree}{H_1(M; \Z)_{\mathrm{free}}}
\newcommand{\Honefreedef}{%
\rightquom{H_1(M; \Z)}{(\mathrm{torsion})}{1pt}{\big}}
\newcommand{\twisted}[3]{#1^#2\left(#3; \  \slrhoplus\right)}
\newcommand{\abstrans}[1]{\abs{\trans\left(#1\right)}}
\newcommand{\cFtil}{\widetilde{\cF}}
\newcommand{\psiF}{{\psi|_{\pi_1(F)}}}
\newcommand{\rhoF}{{\rho|_{\pi_1(F)}}}
\definecolor{locus}{HTML}{8B008B}
\definecolor{simplealex}{HTML}{48D1CC}
\definecolor{galoisgeom}{HTML}{ADF802}
\definecolor{otherparabolic}{gray}{0.2}
\definecolor{axesgray}{gray}{0.5}
\begin{document}
\begin{abstract} 
  Motivated by conjectures relating group orderability, Floer
  homology, and taut foliations, we discuss a systematic and broadly
  applicable technique for constructing left-orders on the fundamental
  groups of rational homology \3-spheres.  Specifically, for a compact
  3-manifold $M$ with torus boundary, we give several criteria which
  imply that whole intervals of Dehn fillings of $M$ have
  left-orderable fundamental groups.  Our technique uses certain
  representations from $\pi_1(M)$ into $\PSLRtilde$, which we organize
  into an infinite graph in $H^1(\partial M; \R)$ called the
  translation extension locus.  We include many plots of such loci
  which inform the proofs of our main results and suggest interesting
  avenues for future research.
\end{abstract}
\maketitle

\setcounter{tocdepth}{1}
\tableofcontents
\pagebreak 

\section{Introduction}

A group is called \emph{left-orderable} when it admits a total ordering
that is invariant under left multiplication (see
\cite{ClayRolfsen2015} for an introduction to the role of orderable groups
in topology).  We will say that a closed $3$-manifold $Y$ is
\emph{orderable} when $\pi_1(Y)$ is left-orderable.  (Technical aside:
by convention, the trivial group is not left-orderable, and so $S^3$
is not orderable.)  Our focus here is on the following question: given
a compact orientable $3$-manifold $M$ with torus boundary, which Dehn
fillings of $M$ are orderable?  We care about this question because of
its relationship with the following conjecture.

\begin{conjecture}\label{BGWconjecture}
  For an irreducible $\Q$-homology \3-sphere $Y$, the following
  are equivalent:
  \begin{enumerate}
  \item \label{item:orderable}
    $Y$ is orderable;
  \item \label{item:notL}
    $Y$ is not an $L$-space; 
  \item \label{item:taut}
    $Y$ admits a co-orientable taut foliation. 
  \end{enumerate}
\end{conjecture}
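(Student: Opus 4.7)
The statement is the L-space conjecture, a major open problem in low-dimensional topology, so any proof sketch is necessarily aspirational. The plan is to establish each of the three pairwise equivalences. Of the six implications, only (\ref{item:taut}) $\Rightarrow$ (\ref{item:notL}) is currently known in full generality: starting from a co-orientable taut foliation on $Y$, one applies the Eliashberg--Thurston theorem to deform it into a weakly symplectically fillable contact structure, and then invokes Ozsv\'ath--Szab\'o's non-vanishing theorem for the Heegaard Floer contact invariant to conclude that $\widehat{HF}(Y)$ has rank strictly greater than $|H_1(Y;\Z)|$, so $Y$ is not an L-space.

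For (\ref{item:orderable}) $\Leftrightarrow$ (\ref{item:taut}), the natural bridge is the action of $\pi_1(Y)$ on the leaf space of the pulled-back foliation $\widetilde{\mathcal{F}}$ on the universal cover $\widetilde Y$. When this leaf space is homeomorphic to $\R$, one obtains a non-trivial action $\pi_1(Y) \to \mathrm{Homeo}_+(\R)$, from which a left-order follows in the standard way, so (\ref{item:taut}) $\Rightarrow$ (\ref{item:orderable}) is in principle accessible modulo controlling leaf-space pathologies and fixed points. Going from (\ref{item:orderable}) to (\ref{item:taut}) is substantially harder: an abstract left-order on $\pi_1(Y)$ produces an action on $\R$, but there is no known general mechanism to realize such an action as the transverse holonomy of a taut foliation on $Y$.

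The implication (\ref{item:notL}) $\Rightarrow$ (\ref{item:orderable}) is the direction most directly attacked by the present paper. My plan would be to use the non-L-space hypothesis to produce a non-abelian representation $\rho\colon \pi_1(Y) \to \widetilde{\mathrm{PSL}_2(\R)}$, and then extract a left-order from the induced action on $\R$ via translation numbers. The main obstacle is constructing such a representation: no direct bridge from Heegaard Floer invariants to $\widetilde{\mathrm{PSL}_2(\R)}$ character varieties is presently available. The paper circumvents this by building the required representations geometrically in the Dehn filling setting via the translation extension locus, which is the main technical innovation, and this restricted setting is where I would focus concrete progress. A full proof of the conjecture would require a representation-theoretic or foliation-theoretic bridge to Floer homology in general, and this is where I expect the principal difficulty to lie.
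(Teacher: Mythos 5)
This statement is Conjecture~\ref{BGWconjecture}, the Boyer--Gordon--Watson / $L$-space conjecture; the paper does not prove it and does not claim to. It is stated as an open problem, and the paper's contribution is evidence for it: Theorems~\ref{MainTheoremOne} and \ref{MainTheoremTwo} establish orderability (condition (\ref{item:orderable})) for intervals of Dehn fillings via representations to $\Gtil$ organized into the translation extension locus. So there is no proof in the paper to compare your proposal against, and your proposal is correctly framed as aspirational rather than as a proof.

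Your summary of the state of the art is accurate and matches Figure~\ref{fig:conj}: of the six implications, only (\ref{item:taut}) $\Rightarrow$ (\ref{item:notL}) is a theorem, proved exactly as you describe (Eliashberg--Thurston perturbation to a weakly fillable contact structure, then non-vanishing of the Ozsv\'ath--Szab\'o contact invariant), with the caveat that the original argument requires $C^2$ foliations and the extension to $C^0$ taut foliations is due to Kazez--Roberts and Bowden, which the paper cites. Your identification of the obstacles in the other directions is also correct: the leaf space of $\cFtil$ need not be $\R$ (it can be a non-Hausdorff simply connected $1$-manifold), so even (\ref{item:taut}) $\Rightarrow$ (\ref{item:orderable}) is open; and there is no known bridge from the Heegaard Floer hypothesis in (\ref{item:notL}) to either orders or foliations. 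Just be aware that what you have written is a survey of why the conjecture is hard, not a proof strategy that could be completed with more work along the lines you indicate; in particular, the paper's technique produces representations to $\Gtil$ from hypotheses on the Alexander polynomial or the trace field, not from the non-$L$-space hypothesis, so it does not even in principle address (\ref{item:notL}) $\Rightarrow$ (\ref{item:orderable}).
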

Recall from \cite{OSLensSpace2005} that an \emph{$L$-space} is a
$\Q$-homology 3-sphere with minimal Heegaard Floer homology,
specifically one where $\dim \HFhat(Y) = |H_1(Y;\Z)|$.  The
equivalence of (\ref{item:orderable}) and (\ref{item:notL}) was boldly
postulated by Boyer, Gordon, and Watson in
\cite{BoyerGordonWatson2013}, which includes a detailed discussion of
this conjecture.  The equivalence of (\ref{item:notL}) and
(\ref{item:taut}) was formulated as a question by Ozsv\'ath and Szab\'o
after they proved that (\ref{item:taut}) implies (\ref{item:notL})
\cite{OSgenusbounds2004, KazezRoberts2015, Bowden2015}.  On its face,
Conjecture \ref{BGWconjecture} is quite surprising given the disparate
nature of these three conditions, but there are actually a number of
interconnections between them summarized in Figure~\ref{fig:conj}.
Despite much initial skepticism, substantial evidence has accumulated
in favor of Conjecture~\ref{BGWconjecture}.  For example, it holds for
\emph{all} graph manifolds
\cite{HanselmanRasmussenRasmussenWatson2015, BoyerClay2014}, many
branched covers of knots in the \3-sphere (\cite{GordonLidman2014} and
references therein), as well as more than 100{,}000 small-volume
hyperbolic \3-manifolds \cite{Dunfield2015}.

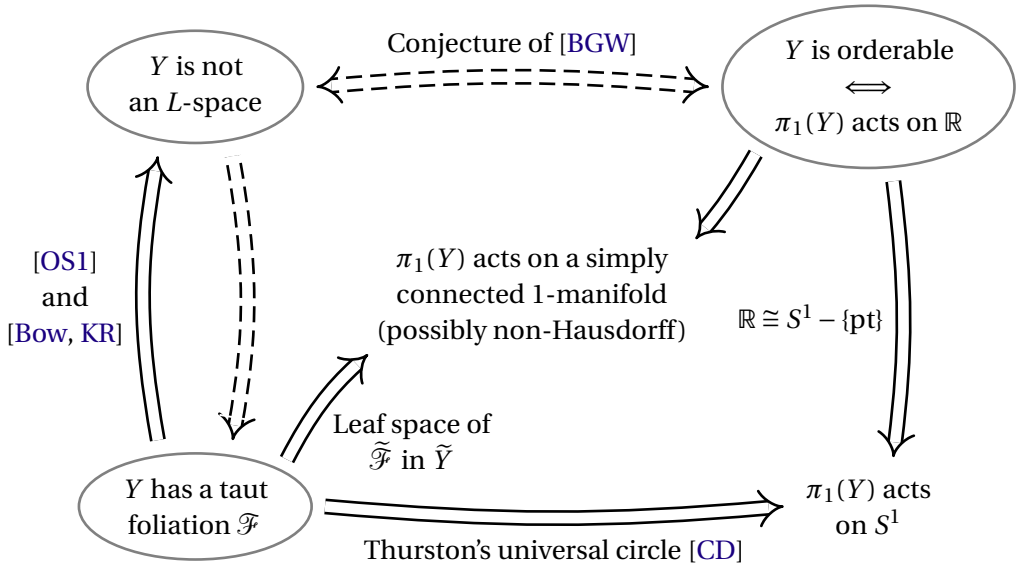
\begin{figure}
  \begin{center}
    \begin{tikzpicture}
      [scale=0.93,
       every node/.style={align=center},
       font=\small,
       main/.style={ellipse, draw=black!50, line width=1pt, outer sep=5pt},
       other/.style={inner sep=8pt}
      ]
      \node[main] (taut) at (0,0) {$Y$ has a taut \\ foliation $\cF$};
      \node[main] (orderable) at (9.5, 6)
          {$Y$ is orderable \\ $\Longleftrightarrow$ 
           \\ $\pi_1(Y)$ acts on $\R$}; 
      \node[main] (nonL) at (0, 6) {$Y$ is not \\ an $L$-space}; 

      \node[other] (circle) at (9.5, 0) {$\pi_1(Y)$ acts \\ on $S^1$};

      \node[other, inner sep=3pt] (leaf) at (4.75, 3) 
         {$\pi_1(Y)$ acts on a simply \\ connected 
        1-manifold \\ (possibly non-Hausdorff)};
    
      \begin{scope}[
        line width=1pt,
        double distance=4pt,
      ]
      \draw[-implies, double, bend right=5]    
           (taut.east) to node[below=4pt] 
           {Thurston's universal circle \cite{CalegariDunfield2003}}
           (circle.west);
      \draw[-implies, double] ([xshift=-15pt]taut.north) 
         to [bend left=10] 
         node[left=3pt] {
           \cite{OSgenusbounds2004} \\ 
             and\\ 
           \cite{Bowden2015, KazezRoberts2015}}
         ([xshift=-15pt]nonL.south);
      \draw[-implies, double, bend left=10] 
         (taut.north east) to 
         node[below right] {Leaf space of \\  $\cFtil$ in $\Ytil$}
         (leaf.south west); 

      \draw[-implies, double] ([xshift=15pt]nonL.south)
         to [bend left=10] ([xshift=15pt]taut.north);

      \draw[line width=10pt, loosely dashed, color=white, bend left=10]
         ([xshift=15pt]nonL.south) to ([xshift=15pt, yshift=9pt]taut.north);
      
      \draw[implies-implies, double, bend left=5] 
          (nonL.east) to node[above=3pt] 
          {Conjecture of \cite{BoyerGordonWatson2013}}
          (orderable.west);
     \draw[line width=8pt, loosely dashed, color=white, bend left=5] 
         ([xshift=9pt]nonL.east) to ([xshift=-9pt]orderable.west);

      \draw[-implies, double, bend left=5] (orderable.south west) to
         (leaf.north east);

      \draw[-implies, double, bend left=7] 
         ([xshift=10pt]orderable.south) to 
         node[left=3pt] {$\R \cong S^1 - \{\mbox{pt}\}$} 
         ([xshift=10pt]circle.north);
      \end{scope}
\end{tikzpicture}
  \end{center}
  \caption{Some results related to Conjecture~\ref{BGWconjecture},
    which asserts the equivalence of the three circled conditions.
    Here $Y$ is an irreducible $\Q$-homology 3-sphere, all
    foliations are co-orientable, and all actions are nontrivial,
    faithful, and orientation preserving; the solid arrows are
    theorems and dotted ones conjectures.  See
    \cite{BoyerGordonWatson2013} for a complete discussion.}
  \label{fig:conj}
\end{figure}

Here, we provide further evidence for Conjecture~\ref{BGWconjecture}
by giving tools that order whole families of $\Q$-homology \3-spheres
arising by Dehn filling a fixed manifold with torus boundary.
To formulate our first main result, we
introduce a new concept: a knot exterior is called \emph{lean} when
the longitudinal Dehn filling $M(0)$ is prime and every closed
essential surface in $M(0)$ is a fiber in a fibration over $S^1$.
(See Section~\ref{sec:background} for precise definitions of standard
terminology and conventions used in this introduction.) 

\begin{restatable}{theorem}{maintheoremone}
  \label{MainTheoremOne}
  Suppose $M$ is the exterior of a knot in a $\Z$-homology
  \3-sphere.  If $M$ is lean and its Alexander polynomial
  $\Delta_M$ has a simple root on the unit circle, then there exists
  $a>0$ such that for every rational $r\in (-a, a)$ the Dehn filling
  $M(r)$ is orderable.
\end{restatable}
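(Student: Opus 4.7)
The plan is to produce, for every rational $r$ in a small interval around $0$, a nontrivial representation $\pi_1(M(r)) \to \widetilde{\mathrm{PSL}_2(\R)}$, and then promote it, via the standard embedding $\widetilde{\mathrm{PSL}_2(\R)} \hookrightarrow \mathrm{Homeo}_+(\R)$, to a left-order on $\pi_1(M(r))$. All of these representations will come from a single smooth arc in the translation extension locus of $M$ in $H^1(\partial M; \R)$ that crosses the meridian axis transversally. The arc itself originates from the simple root $\zeta$ of $\Delta_M$: by the classical Burde--de Rham construction (refined by Heusener--Porti--Su\'arez-Peir\'o), $\zeta$ yields a reducible nonabelian representation $\rho_0 : \pi_1(M) \to \mathrm{PSL}_2(\R)$ sending the meridian $\mu$ to rotation by $\arg \zeta$, and simpleness of $\zeta$ is exactly the condition that makes $\rho_0$ a smooth point of the $\mathrm{PSL}_2(\R)$-character variety admitting a transverse one-parameter family of irreducible deformations.

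Because $M$ is a knot exterior in a $\Z$-homology sphere, the relevant lifting obstruction in $H^2(M;\Z)$ vanishes, so the family of representations lifts to $\widetilde{\mathrm{PSL}_2(\R)}$. Restricting to $\pi_1(\partial M)$ and recording translation data gives a smooth arc in the translation extension locus through a point $p_0$ on the meridian axis. The tangent vector to this arc at $p_0$ is governed by a class in $H^1\bigl(M; \mathfrak{sl}_2(\R)_{\rho_0}\bigr)$; simpleness of $\zeta$ forces its longitudinal component to be nonzero, or equivalently, forces the derivative of the longitudinal translation number along the deformation to be nonzero. Hence the arc is transverse to the meridian axis at $p_0$ and its image sweeps out an open interval of slopes $(-a, a)$. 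For each rational $r$ in this interval, the intersection of the arc with the line of slope $r$ yields a representation $\psi_r : \pi_1(M) \to \widetilde{\mathrm{PSL}_2(\R)}$ whose value on the slope-$r$ curve $\gamma_r$ has zero translation number and hence is trivial, so $\psi_r$ descends to $\bar\psi_r : \pi_1(M(r)) \to \widetilde{\mathrm{PSL}_2(\R)}$.

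The main obstacle, and what I expect to be the hardest step, is guaranteeing that $\bar\psi_r$ is nontrivial for \emph{every} rational $r \in (-a, a)$ rather than just generic ones. This is precisely where the leanness hypothesis enters. A slope at which $\bar\psi_r$ becomes trivial, or where its image degenerates to an abelian subgroup, would---via a Culler--Shalen style analysis along the arc, together with the behavior of translation numbers at ideal points---produce a closed essential surface in $M$ that persists in $M(0)$ as a non-fiber, directly contradicting the leanness hypothesis. Once nontriviality of $\bar\psi_r$ is secured, the composition $\pi_1(M(r)) \xrightarrow{\bar\psi_r} \widetilde{\mathrm{PSL}_2(\R)} \hookrightarrow \mathrm{Homeo}_+(\R)$ is a nontrivial orientation-preserving action on $\R$, and the standard argument that a nontrivial $\mathrm{Homeo}_+(\R)$ action of the fundamental group of an irreducible $\Q$-homology $3$-sphere yields left-orderability (irreducibility of $M(r)$ for small $|r|$ being itself a consequence of leanness) completes the proof.
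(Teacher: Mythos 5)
Your overall architecture---deform the reducible representation attached to the simple root $\zeta$ into $\PSL{2}{\R}$ via Heusener--Porti, lift to $\widetilde{\mathrm{PSL}_2(\R)}$ using $H^2(M;\Z)=0$, read off an arc in $\TEL{M}$, and intersect it with the lines $L_r$ of small slope to get nontrivial actions of $\pi_1(M(r))$ on $\R$---is exactly the paper's strategy. But there is a genuine gap at the most delicate step. You assert that simpleness of $\zeta$ ``forces the derivative of the longitudinal translation number along the deformation to be nonzero,'' so that the arc leaves the horizontal axis transversally. This does not follow from simpleness. The cohomological input is that $\twisted{H}{1}{M}\cong\C$ and that its restriction to $\twisted{H}{1}{\partial M}\cong\C^2$ is injective; that guarantees only that \emph{some} peripheral trace function varies to first order, and nothing identifies that direction with the longitude. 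A priori the entire deformed family could kill $\lambda$, so that the arc is contained in the horizontal axis and no line $L_r$ with $r\neq 0$ picks up anything beyond abelian representations. This is precisely the failure mode that the leanness hypothesis is there to exclude: if $\trans(\widetilde\rho_t(\lambda))\equiv 0$, then each $\rho_t(\lambda)$ is elliptic or trivial with zero translation number, hence trivial, so the nonconstant, eventually irreducible family factors through $\pi_1(M(0))$ and yields a positive-dimensional component of $X\!\left(M(0)\right)$ containing an irreducible character. Leanness forbids this because, when every closed essential surface in the prime manifold $M(0)$ is a fiber, a Culler--Shalen ideal-point argument applied to such a curve of characters of $\pi_1(M(0))$ (not to your arc in $\TEL{M}$) shows $M(0)$ has no positive-dimensional components of irreducible characters. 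The paper's open problems make clear this hypothesis cannot simply be dropped, so your transversality claim is not just unproved but false in general without it.

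Conversely, the step you single out as hardest---nontriviality of $\bar\psi_r$ for \emph{every} rational $r$ in the interval---is not actually an issue and does not require leanness. Since $|\Delta_M(1)|\neq 0$ we have $\zeta\neq 1$, so the arc emanates from the Alexander point $\left(\arg\zeta/2\pi,\,0\right)$, which is not the origin; after shrinking the arc to avoid ideal and parabolic points, any intersection of $L_r$ with it (or with its image under the $\pi$-rotation symmetry of $\TEL{M}$, which you need in order to cover both signs of $r$) is a nonzero point of $H^1(\partial M;\R)$, so some peripheral element has nonzero translation number and the induced representation of $\pi_1(M(r))$ is automatically nontrivial. Finally, irreducibility of $M(r)$ for the Boyer--Rolfsen--Wiest step comes from Gordon--Luecke (at most three reducible fillings), not from leanness, and the slope $r=0$ is handled separately since $b_1(M(0))=1$.
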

\noindent
In fact, with slightly more technical hypotheses, we extend this
result to $\Q$\hyp homology \3-spheres in
Theorem~\ref{MainTheoremOneOne} below.  The latter result also weakens
the requirement that $M$ is lean, replacing it by a condition
involving $\PSL{2}{\C}$-character varieties.  Combining
Theorem~\ref{MainTheoremOne} with Roberts' construction of foliations
on Dehn fillings of fibered manifolds in \cite{Roberts2001}
immediately gives:

\begin{corollary}
  Suppose $M$ is the exterior of a knot in a $\Z$-homology
  \3-sphere.  Suppose that $M$ is lean and fibers over the circle. If $\Delta_M$
  has a simple root on the unit circle, then
  Conjecture~\ref{BGWconjecture} holds for all $M(r)$ with $r \in (-a,
  a)$ for some $a > 0$.  In particular, these
  $M(r)$ are orderable and have a co-orientable taut foliation.
\end{corollary}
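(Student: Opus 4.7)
The plan is to deduce the corollary as an essentially immediate synthesis of Theorem~\ref{MainTheoremOne} with two well-known results. My strategy is to exhibit all three conditions of Conjecture~\ref{BGWconjecture} simultaneously on a common interval of slopes around $r=0$, so that the equivalence asserted by the conjecture is verified trivially for those fillings.

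First, I would apply Theorem~\ref{MainTheoremOne} directly to obtain an $a_1>0$ such that every rational $r \in (-a_1, a_1)$ yields an orderable filling $M(r)$; this establishes condition~(\ref{item:orderable}). Second, I would invoke Roberts~\cite{Roberts2001}: for the exterior of a fibered knot in a $\Z$-homology 3-sphere (with $M(0)$ prime, which is part of the leanness hypothesis), her construction modifies the fibration across the surgery solid torus to produce a co-orientable taut foliation on $M(r)$ for every $r$ in some nontrivial interval $(-a_2, a_2)$ about the longitudinal slope. This gives condition~(\ref{item:taut}). Third, the theorem of Ozsv\'ath-Szab\'o~\cite{OSgenusbounds2004} then at once delivers condition~(\ref{item:notL}), since any closed orientable 3-manifold admitting a co-orientable taut foliation fails to be an $L$-space.

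Setting $a = \min(a_1, a_2)$, every $M(r)$ with $r \in (-a,a)$ simultaneously satisfies all three conditions of the conjecture, so the conjectured equivalence holds trivially for such fillings. The only bookkeeping concerns the conjecture's standing hypothesis of irreducible $\Q$-homology 3-spheres: because $M$ is a knot exterior in a $\Z$HS, each $M(r)$ with $r \neq 0$ is automatically a $\Q$HS, and one can shrink $a$ slightly, if needed, to avoid the finitely many reducible or toroidal exceptional slopes near zero. The slope $r=0$ itself falls outside the scope of the conjecture (since $H_1(M(0))=\Z$), but still satisfies the "in particular" clause: the fibration of $M(0)$ is a co-orientable taut foliation, and the surjection $\pi_1(M(0)) \to \Z$ makes $M(0)$ orderable. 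I do not foresee any serious obstacle here, since the genuine depth lives in Theorem~\ref{MainTheoremOne} and in the cited work of Roberts and Ozsv\'ath-Szab\'o; the corollary is essentially the intersection of two intervals.
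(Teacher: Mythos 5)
Your proposal is correct and matches the paper's own (one-line) justification exactly: the corollary is stated as an immediate combination of Theorem~\ref{MainTheoremOne} with Roberts' foliation construction and the Ozsv\'ath--Szab\'o theorem that taut foliations preclude $L$-spaces, so that all three conditions of Conjecture~\ref{BGWconjecture} hold simultaneously on the intersection of the two intervals. Your additional bookkeeping about irreducibility and the slope $r=0$ is sensible and consistent with the paper's intent.
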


Our second main result is the following, and applies to branched
covers as well as Dehn fillings; see Section~\ref{sec:realtraces} for
the definition of the trace field of a hyperbolic \3-manifold.

\begin{restatable}{theorem}{maintheoremtwo}
  \label{MainTheoremTwo}
  Let $K$ be a hyperbolic knot in a $\Z$-homology 3-sphere $Y$. If the
  trace field of the knot exterior $M$ has a real embedding then:
\begin{enumerate}
  \item\label{item:branched}
    For all sufficiently large $n$, the $n$-fold cyclic 
    cover of $Y$ branched over $K$ is orderable.
  \item\label{item:fill} There is an interval $I$ of the form
    $(-\infty, a)$ or $(a, \infty)$ so that the Dehn filling $M(r)$ is
    orderable for all rational $r \in I$.
  \item \label{item:newfill} There exists $b >0$ so that for every rational
    $r \in (-b, 0) \cup (0, b)$ the Dehn filling $M(r)$ is orderable.
\end{enumerate}
\end{restatable}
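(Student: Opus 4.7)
The plan is to combine the real embedding hypothesis with the translation extension locus $\EL{\Gtil}{M}$ developed earlier in the paper. I would begin by post-composing the hyperbolic holonomy $\rhohyp: \pi_1(M) \to \PSL{2}{\C}$ with the given real embedding $\sigma$ of the trace field to obtain a representation $\rho: \pi_1(M) \to \PSL{2}{\R}$. Since $H^2(M; \Z/2)$ vanishes for the exterior of a knot in a $\Z$-homology sphere, $\rho$ lifts to $\tilde\rho: \pi_1(M) \to \PSLRtilde$. The meridian and longitude map to commuting parabolics, whose lifts carry well-defined integer translation numbers, producing a nontrivial point $p = \bigl(\trans \tilde\rho(\mu),\ \trans \tilde\rho(\lambda)\bigr) \in \EL{\Gtil}{M} \subset H^1(\partial M;\R)$. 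Throughout, I rely on the paper's general mechanism that any non-origin intersection of the locus with the line in $H^1(\partial M;\R)$ dual to a rational slope $r$ yields a nontrivial $\PSLRtilde$-representation of $\pi_1(M(r))$, and hence a left-order on $\pi_1(M(r))$ via the Euler-class criterion.

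For part (\ref{item:fill}), I would invoke the structural result that the component of $\EL{\Gtil}{M}$ through $p$ contains an unbounded arc. Its projection to the space of slopes then sweeps out an interval of the form $(-\infty,a)$ or $(a,\infty)$, and each rational slope in that interval gives an orderable filling by the mechanism above.

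For part (\ref{item:newfill}), I would instead examine the local arc of $\EL{\Gtil}{M}$ through $p$ produced by the real one-parameter deformation of $\rhohyp$ (the real slice of the Thurston deformation cut out by $\sigma$). Provided this arc is not tangent to the line dual to the longitude, its projection to slope-space covers a punctured neighborhood of $0$, yielding the asserted interval $(-b,0)\cup(0,b)$ of orderable slopes.

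For part (\ref{item:branched}), I would use that the $n$-fold cyclic branched cover $\Sigma_n$ is a Dehn filling of the $n$-fold cyclic cover $M_n$ of $M$ along a slope whose image in $H^1(\partial M;\R)$ tends to the meridional direction as $n\to\infty$. Restricting $\tilde\rho$ to $\pi_1(M_n)$ gives a nontrivial point on $\EL{\Gtil}{M_n}$, and running the argument of part (\ref{item:newfill}) on $M_n$ places the branching slope inside the orderable neighborhood for all sufficiently large $n$. The chief technical obstacle is the transversality step hidden in (\ref{item:newfill}): ruling out that the local arc of $\EL{\Gtil}{M}$ through $p$ is tangent to the longitudinal line, so that nearby slopes on \emph{both} sides of $0$ are actually realized. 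I expect this to come down to nonvanishing of an appropriate derivative of translation number along the Thurston deformation, which is precisely the sort of infinitesimal question the translation extension locus machinery is designed to control.
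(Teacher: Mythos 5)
Your overall strategy is the paper's: Galois-conjugate the holonomy into $\PSL{2}{\R}$, lift to $\Gtil$, and feed the resulting point and arc of $\TEL{M}$ into the ordering criterion (Lemma~\ref{lemma:key} / Lemma~\ref{lemma:branched}). But there are genuine gaps. The most serious one is that you assert the point $p=\bigl(\trans\rhotil(\mu),\trans\rhotil(\lambda)\bigr)$ is nontrivial without any justification. Both coordinates are integers since the peripheral elements are parabolic, and the lift can be normalized so that $\trans\rhotil(\mu)=0$; if $\trans\rhotil(\lambda)$ also vanished, $p$ would be the origin and the ordering criterion gives nothing. The paper's Lemma~\ref{lem:transodd} rules this out by invoking Calegari's theorem that for any $\SL{2}{\C}$-lift of the holonomy the longitude has trace $-2$ (because it bounds an orientable spanning surface); Galois conjugation preserves this, and trace $-2$ forces the translation number of the lifted longitude to be an \emph{odd} integer, hence nonzero. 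This is exactly the ``location of the arc'' issue the introduction flags as the key point, and your proposal has no substitute for it. (A smaller but real gap of the same kind: the existence of a genuine arc of $\XG{M}$ through the Galois conjugate character, parameterized by $\tr^2_\gamma$, requires transporting the smoothness of the character variety at the discrete faithful character across the Galois action; this is Lemma~\ref{lem:galois}.)

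Second, your mechanisms for (\ref{item:fill}) and (\ref{item:newfill}) do not work as stated. For (\ref{item:fill}), the structure theorem does \emph{not} supply an unbounded arc through $p$: the locus is compact modulo the dihedral symmetry group, and a single component need not be unbounded. What is actually used is that the arc emanates from $(0,k)$ on the vertical axis with $k\neq 0$, so the steep lines $L_r$ with $\abs{r}$ large on one side must cross it; that is what produces $(-\infty,a)$ or $(a,\infty)$. For (\ref{item:newfill}), the set of slopes $r$ for which $L_r$ meets a single short arc near $(0,k)$ is a neighborhood of $\infty$, not a punctured neighborhood of $0$, so local transversality to the horizontal direction is not by itself enough. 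The missing ingredient is the Agol--Futer argument (Lemma~\ref{lem:iandave}): one translates the non-horizontal arc by the infinite dihedral symmetry out to large $\mu^*$-coordinate, where the nearly horizontal lines $L_r$ with small $\abs{r}$ descend far enough to cross one of the translates. Relatedly, your (\ref{item:branched}) proposes running (\ref{item:newfill}) on the cyclic cover $M_n$, but the branched cover is the \emph{meridional} filling of $M_n$, while (\ref{item:newfill}) only controls slopes near the longitude; the correct route is that the arc meets the vertical line $\mu^*=1/n$ for all large $n$, which is what Lemma~\ref{lemma:branched} converts into orderability of the branched cover. Finally, any application of Lemma~\ref{lemma:key} also requires irreducibility of the fillings, handled in the paper by Gordon--Luecke.
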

\noindent
The reason the slope $0$ is excluded from the conclusion in
(\ref{item:newfill}) is that $M(0)$ might have a lens space
connect-summand and hence not be orderable. Part (\ref{item:branched})
of Theorem~\ref{MainTheoremTwo} was also proven independently by
Steven Boyer (personal communication); the lemma behind part
(\ref{item:newfill}) was pointed out to us by Ian Agol and David
Futer.

\subsection{Translation extension loci}
\FloatBarrier
We prove Theorems~\ref{MainTheoremOne} and \ref{MainTheoremTwo} by
studying representations of \3-manifold groups into the nonlinear Lie
group $\Gtil = \PSLRtilde$.  Using such representations to order
\3-manifold groups goes back at least to
\cite{EisenbudHirschNeumann1981}, and has been exploited repeatedly of
late to provide evidence for Conjecture \ref{BGWconjecture}.  Closest
to our results here, representations to $\Gtil $ were used to obtain
ordering results for Dehn surgeries on two-bridge knots in
\cite{HakamataTeragaito2014, Tran2015c}, as well as branched covers of
two-bridge knots in \cite{Hu2015, Tran2015b}.  Indeed, some of the
results on branched covers in \cite{Hu2015, Tran2015b, Gordon2016} can be viewed
as special cases of both the statement and the proof of
Theorem~\ref{MainTheoremTwo}(a).

\begin{figure}
  \begin{center}
    \hspace{3pt}%
 \pgfkeys{/matplotlibfigure, default, width=6cm, font=\scriptsize}%
 \begin{tikzoverlayabs}[width=\matplotlibfigurewidth]{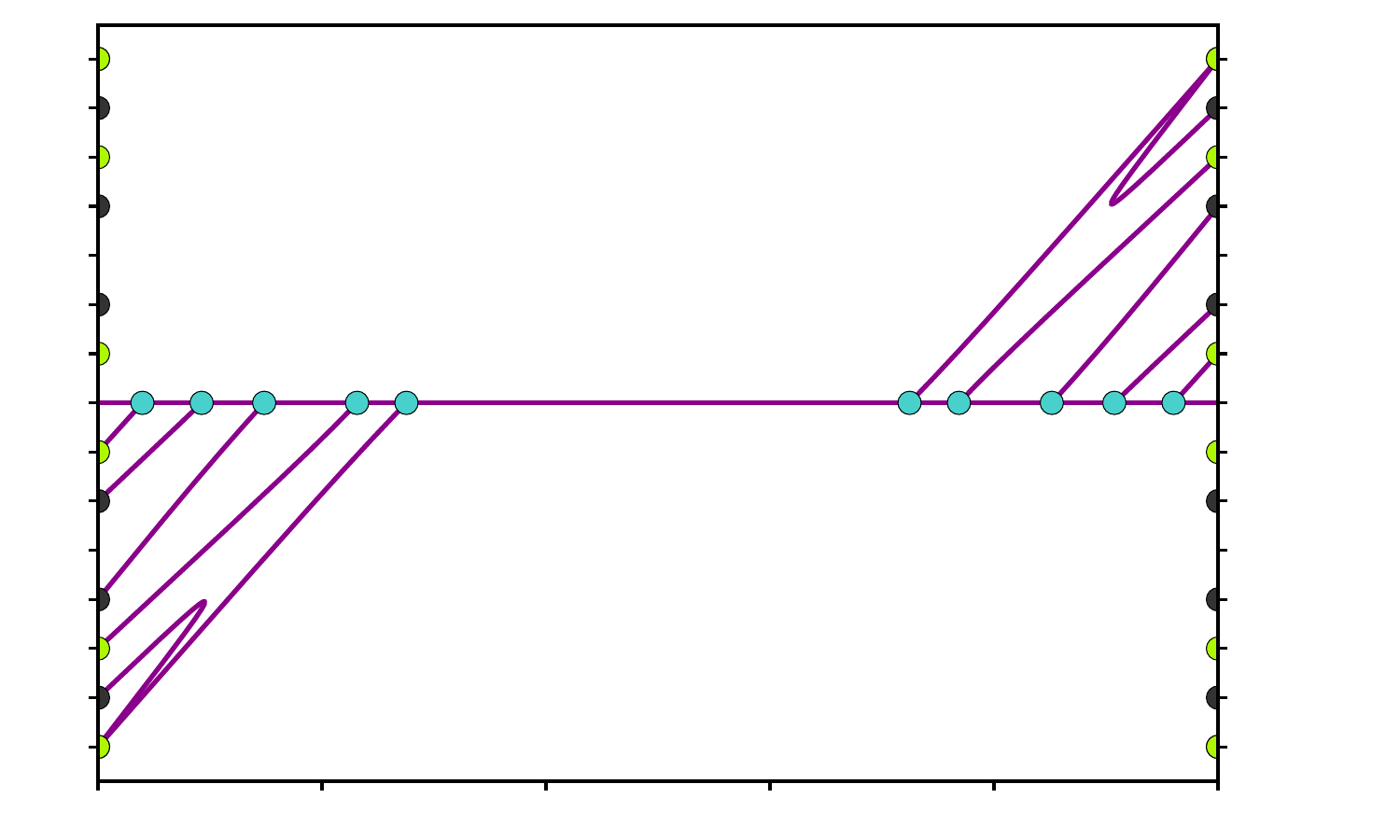}[\matplotlibfigurefont]
  \draw (0.086000, 0.952000) node[below right] {$s841$};
  \draw (0.070000, 0.049167) node[below] {$0.0$};
  \draw (0.230000, 0.049167) node[below] {$0.2$};
  \draw (0.390000, 0.049167) node[below] {$0.4$};
  \draw (0.550000, 0.049167) node[below] {$0.6$};
  \draw (0.710000, 0.049167) node[below] {$0.8$};
  \draw (0.870000, 0.049167) node[below] {$1.0$};
  \draw (0.057500, 0.169441) node[left] {$-6$};
  \draw (0.057500, 0.286429) node[left] {$-4$};
  \draw (0.057500, 0.403418) node[left] {$-2$};
  \draw (0.057500, 0.520407) node[left] {$0$};
  \draw (0.057500, 0.637396) node[left] {$2$};
  \draw (0.057500, 0.754385) node[left] {$4$};
  \draw (0.057500, 0.871374) node[left] {$6$};
  \node[right] at (0.89, 0.08) {$\mu^*$};
  \node[left] at (0.06000, 0.96)  {$\lambda^*$};
\end{tikzoverlayabs}

 \pgfkeys{/matplotlibfigure, default, width=6cm, font=\scriptsize}%
 \begin{tikzoverlayabs}[width=\matplotlibfigurewidth]{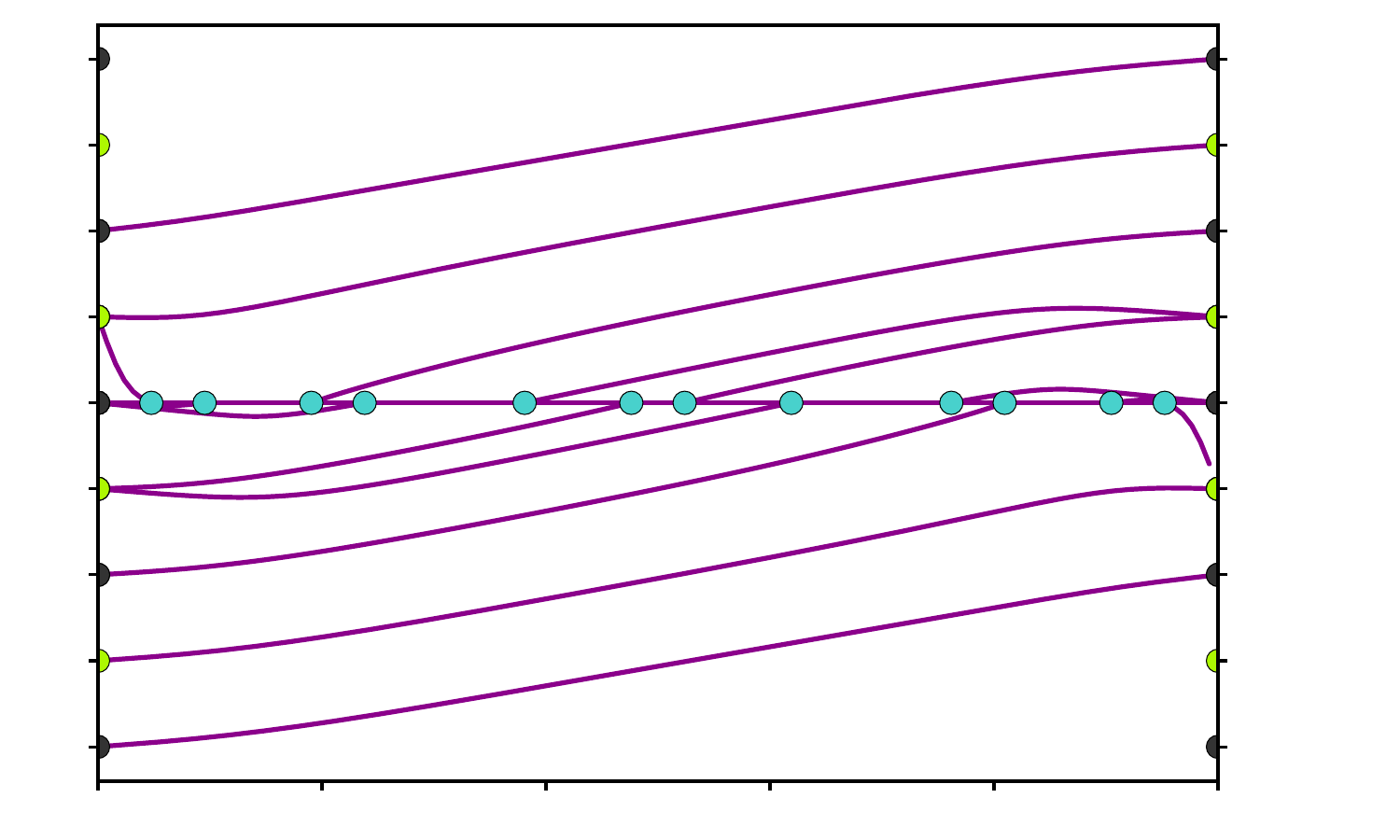}[\matplotlibfigurefont]
  \draw (0.086000, 0.952000) node[below right] {$o9_{04139}$};
  \draw (0.070000, 0.049167) node[below] {$0.0$};
  \draw (0.230000, 0.049167) node[below] {$0.2$};
  \draw (0.390000, 0.049167) node[below] {$0.4$};
  \draw (0.550000, 0.049167) node[below] {$0.6$};
  \draw (0.710000, 0.049167) node[below] {$0.8$};
  \draw (0.870000, 0.049167) node[below] {$1.0$};
  \draw (0.057500, 0.110948) node[left] {$-4$};
  \draw (0.057500, 0.315690) node[left] {$-2$};
  \draw (0.057500, 0.520432) node[left] {$0$};
  \draw (0.057500, 0.725174) node[left] {$2$};
  \draw (0.057500, 0.929916) node[left] {$4$};
  \node[right] at (0.89, 0.08) {$\mu^*$};
  \node[left] at (0.0300, 0.96)  {$\lambda^*$};
\end{tikzoverlayabs}

    \vspace{0.4cm}

 \pgfkeys{/matplotlibfigure, default, width=6cm, font=\scriptsize}%
 \begin{tikzoverlayabs}[width=\matplotlibfigurewidth]{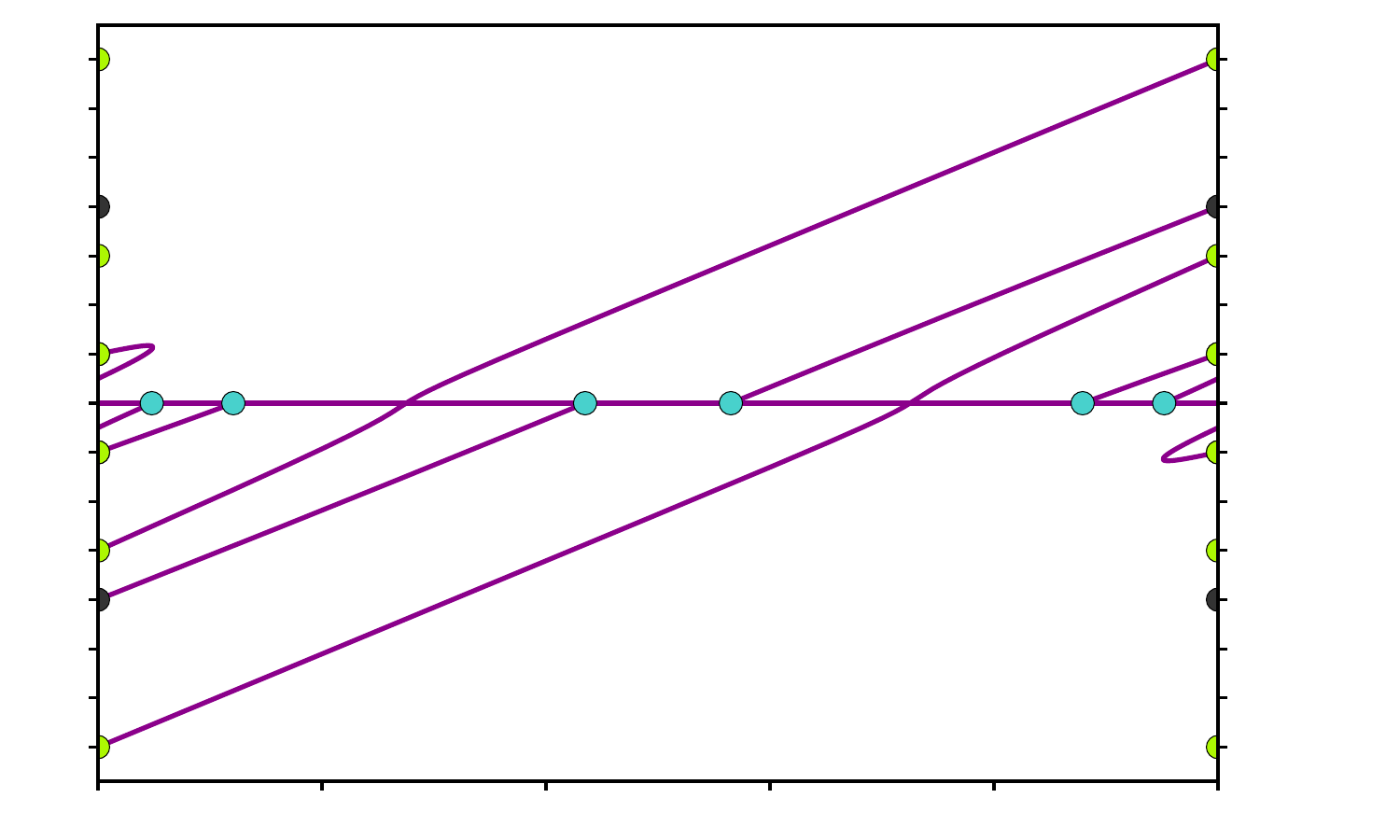}[\matplotlibfigurefont]
  \draw (0.086000, 0.952000) node[below right] {$t03632$};
  \draw (0.070000, 0.049167) node[below] {$0.0$};
  \draw (0.230000, 0.049167) node[below] {$0.2$};
  \draw (0.390000, 0.049167) node[below] {$0.4$};
  \draw (0.550000, 0.049167) node[below] {$0.6$};
  \draw (0.710000, 0.049167) node[below] {$0.8$};
  \draw (0.870000, 0.049167) node[below] {$1.0$};
  \draw (0.057500, 0.169119) node[left] {$-6$};
  \draw (0.057500, 0.286079) node[left] {$-4$};
  \draw (0.057500, 0.403040) node[left] {$-2$};
  \draw (0.057500, 0.520000) node[left] {$0$};
  \draw (0.057500, 0.636960) node[left] {$2$};
  \draw (0.057500, 0.753921) node[left] {$4$};
  \draw (0.057500, 0.870881) node[left] {$6$};
  \node[right] at (0.89, 0.08) {$\mu^*$};
  \node[left] at (0.06000, 0.96)  {$\lambda^*$};
\end{tikzoverlayabs}

 \pgfkeys{/matplotlibfigure, default, width=6cm, font=\scriptsize}%
 \begin{tikzoverlayabs}[width=\matplotlibfigurewidth]{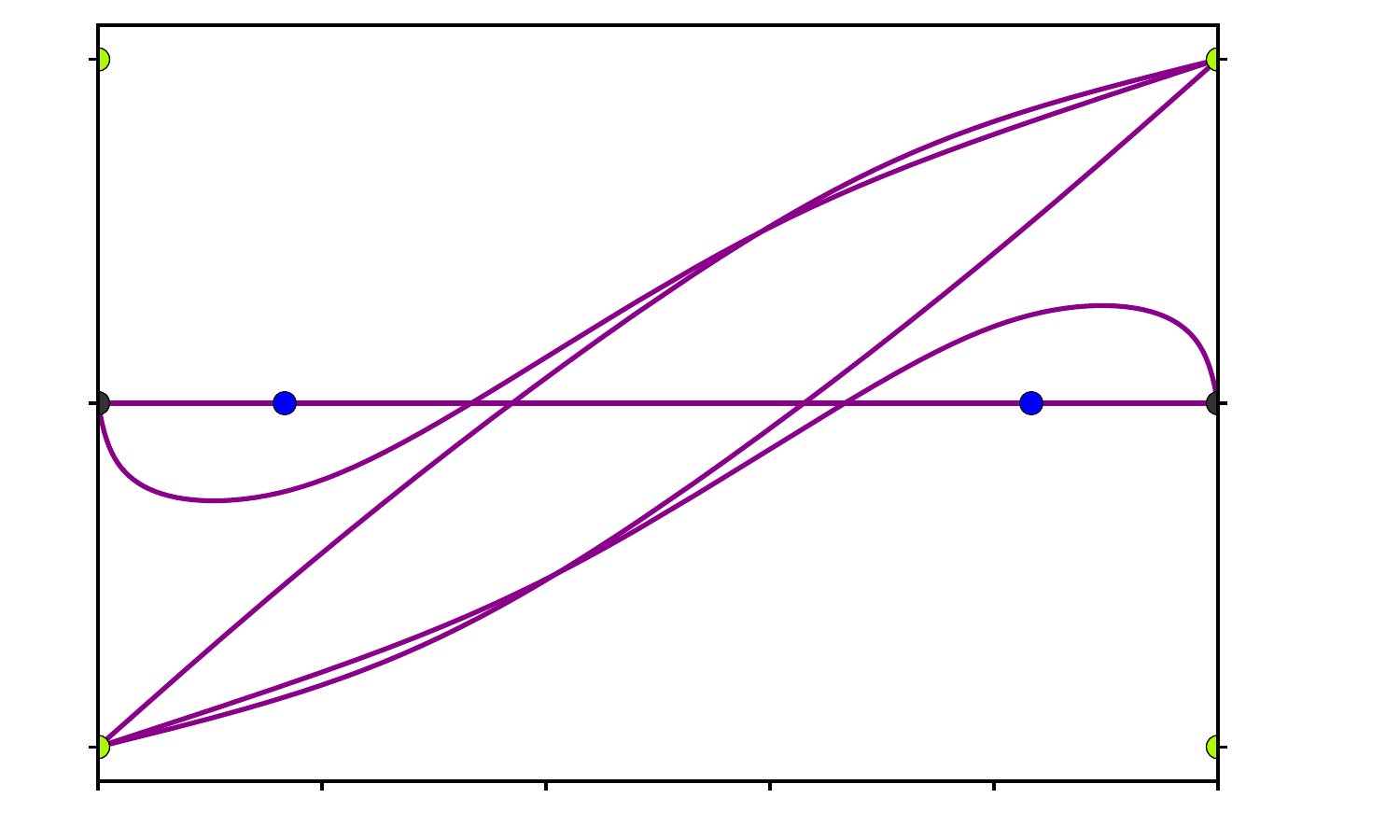}[\matplotlibfigurefont]
  \draw (0.086000, 0.952000) node[below right] {$o9_{30426}$};
  \draw (0.070000, 0.049167) node[below] {$0.0$};
  \draw (0.230000, 0.049167) node[below] {$0.2$};
  \draw (0.390000, 0.049167) node[below] {$0.4$};
  \draw (0.550000, 0.049167) node[below] {$0.6$};
  \draw (0.710000, 0.049167) node[below] {$0.8$};
  \draw (0.870000, 0.049167) node[below] {$1.0$};
  \draw (0.057500, 0.110748) node[left] {$-1$};
  \draw (0.057500, 0.520000) node[left] {$0$};
  \draw (0.057500, 0.929252) node[left] {$1$};
  \node[right] at (0.89, 0.08) {$\mu^*$};
  \node[left] at (0.03000, 0.96)  {$\lambda^*$};
\end{tikzoverlayabs}

  \end{center}
  \caption{Some translation extension loci that are discussed in
    detail in Section~\ref{sec:menagerie}.}
  \label{fig:transexs}
\end{figure}

Our main contribution here is to provide a framework for
systematically studying representations to $\Gtil$ in a way tailored
for applications such as Theorems~\ref{MainTheoremOne} and
\ref{MainTheoremTwo}.  Specifically, given the exterior $M$ of a knot
in a $\Q$-homology sphere, we organize the representations of
$\pi(M) \to \Gtil $ that are elliptic on $\pi_1(\partial M)$ into a
graph in $H^1(\partial M; \R)$ called the \emph{translation extension
  locus} and denoted $\TEL{M}$.  Very roughly, the locus $\TEL{M}$ is
the image of the ``character variety of $\Gtil $ representations'' of
$\pi_1(M)$ in the corresponding object for $\pi_1(\partial M)$ under
the map induced by $\partial M \hookrightarrow M$; as such, it
parallels the $A$-polynomial story of \cite{CCGLS}.  This locus was
first studied by Khoi in his computations of Seifert volumes of
certain hyperbolic \3-manifolds \cite{Khoi2003}.  While the graph
$\TEL{M}$ is infinite, it is actually compact modulo a discrete group
of symmetries, and so it is possible to draw a picture of it: see
Figure~\ref{fig:transexs} for some examples, and
Section~\ref{sec:menagerie} for many more.

To prove Theorems~\ref{MainTheoremOne} and \ref{MainTheoremTwo}, we
give a simple criteria (Lemma~\ref{lemma:key}) which says roughly that
if the line in $H^1(\partial M; \R)$ of slope $-r$ meets $\TEL{M}$
away from the origin, then the Dehn surgery $M(r)$ is orderable.
(Lemma~\ref{lemma:branched} is our analogous result for branched
covers.)  In Section~\ref{sec:menagerie}, we use Lemma~\ref{lemma:key}
to order large intervals of Dehn fillings in some specific examples;
indeed, the conclusions of Theorems~\ref{MainTheoremOne} and
\ref{MainTheoremTwo} are much weaker than what we typically observe in
practice.

Once we establish the basic properties of these loci in
Theorem~\ref{thm:structure}, our main results are proved by using the
given hypotheses to produce at least a small arc of $\TEL{M}$ in a
certain location in $H^1(\partial M; \R)$, and then applying
Lemma~\ref{lemma:key} at many points along the arc.  For
Theorem~\ref{MainTheoremOne}, we build the arc by using
\cite{HeusenerPorti2005} to deform reducible representations
corresponding to a root of $\Delta_M$ on the unit circle to more
interesting representations in $\PSL{2}{\R}$.  In
Theorem~\ref{MainTheoremTwo}, we first use a combination of hyperbolic
geometry and algebraic geometry to produce an arc which contains (the
image of) the representation associated with the real embedding of the
trace field. The key issue of the arc's location in
$H^1(\partial M; \R)$ hinges on the result of \cite[Corollary
2.4]{Calegari2006} that for a lift of the holonomy representation of
the hyperbolic structure of $M$ to $\SL{2}{\C}$ the trace of the
longitude is $-2$ rather than $2$.

\subsection{Applicability}

While there are many cases where neither Theorem~\ref{MainTheoremOne}
or Theorem~\ref{MainTheoremTwo} applies, we next argue that some of
their hypotheses are quite generic and therefore our results should be
interpreted as providing a profusion of orderable \3-manifolds.

For example, the Alexander polynomial hypothesis of
Theorem~\ref{MainTheoremOne} holds for the vast majority of the
simpler \3-manifolds that one can tabulate: specifically, it occurs
for 81.2\% of the 1.7 million prime knots with at most 16 crossings
\cite{HosteThistlethwaiteWeeks1998} and 96.2\% of the 59{,}068
hyperbolic $\Q$-homology solid tori that can be triangulated with at
most 9 ideal tetrahedra \cite{Burton2014}.  We also looked at more
complicated knots by taking braid closures of random 10-strand braids
with 100--1{,}000 crossings (conditioned on the closure being a knot
rather than a link); of the 100{,}000 such knots we examined, some
99.87\% had Alexander polynomial with a simple root on the unit
circle.  Finally, of particular interest in light of
Conjecture~\ref{BGWconjecture} are the $L$-space knots in $S^3$, that
is, those with a non-trivial Dehn surgery producing an $L$-space.  The
Alexander polynomials of such knots have a very special form
\cite[Corollary~1.3]{OSLensSpace2005}, and it follows from
\cite{KonvalinaMatache2004} that $L$-space knots have $\Delta_M$ with
a root on the unit circle; experimentally, there is always a simple
root, but we are unable to prove this.

Turning to Theorem~\ref{MainTheoremTwo}, it is also very common for a
hyperbolic 3-manifold to have a trace field with a real embedding.
For example, Goerner~\cite{GoernerDatabase} has calculated the trace
fields of all 61{,}911 cusped manifolds that can be triangulated with
at most 9 ideal tetrahedra \cite{Burton2014}.  Among these, some
95.5\% had trace fields with a real embedding.  Indeed, about 36.3\%
of the roots of the polynomials defining these fields were real.  We
conjecture that, for any reasonable model of a random hyperbolic
\3-manifold, the trace field has a real embedding with probability
one.

In contrast, the leanness condition of Theorem~\ref{MainTheoremOne},
whose use in the proof is more technical, is hardly ubiquitous.
While it can easily be arranged by, for example, taking the exterior
of a knot in $S^2 \times S^1$ which generates
$H_1(S^2 \times S^1; \Z)$, it seems that a generic knot in $S^3$ is
not lean: work of Gabai \cite{Gabai1987} implies that a lean knot must
be fibered, and the latter condition is experimentally probability 0
in the above models of random knots (see also
\cite{DunfieldThurston2006}).  That said, the strengthened version of
Theorem~\ref{MainTheoremOne}, namely Theorem~\ref{MainTheoremOneOne},
requires the weaker hypothesis that $M$ is longitudinally rigid (see
Section~\ref{sec:alexordering}).  This condition might well be
generic---we know of only a few cases where it fails---but
unfortunately it is hard to study in bulk.


\subsection{Outline of the rest of the paper}

Sections~\ref{sec:background} and \ref{sec:basicfacts} review some
definitions and background results; Section~\ref{sec:background}
discusses topology, character varieties, and real algebraic geometry,
whereas Section~\ref{sec:basicfacts} is focused on the group $\Gtil$.
Section~\ref{sec:TEL} defines the translation extension locus and
states its basic properties.  Section~\ref{sec:menagerie} is the
longest and arguably the heart of the paper; it gives 12 examples of
translation extension loci and discusses their properties as they
relate to our results here.  Section~\ref{sec:structureproofs} proves
the basic structure result for these loci
(Theorem~\ref{thm:structure}), as well as Lemmas~\ref{lemma:key} and
\ref{lemma:branched}.  Sections~\ref{sec:alexordering} and
\ref{sec:realtraces} then prove Theorems~\ref{MainTheoremOne} and
\ref{MainTheoremTwo} respectively.  Section \ref{sec:realtraces}
includes Remark~\ref{rem:allL} which answers \cite[Question
6]{LidmanWatson2014} by giving an example of a hyperbolic $\Q$\hyp
homology solid torus that is not fibered and where every
non-longitudinal Dehn filling is an $L$-space.  Finally,
Section~\ref{sec:open} lists ten related open problems.

\subsection{Acknowledgements} 

We were partially supported by several US NSF grants: DMS-1105476,
DMS-1207720, DMS-1510204, and also the GEAR Network (DMS-1107452);
additionally, Dunfield was partially supported by a Simons
Fellowship. Some of our work was conducted during extended stays at
ICERM, the University of Melbourne, and the Institute for Advanced
Study.  We gratefully thank Steve Boyer for helpful conversations and
comments, as well as Ian Agol and Dave Futer for discovering
Lemma~\ref{lem:iandave} and graciously letting us include it here.
Finally, we thank the referee for providing helpful comments and
suggestions.

\section{Background}
\label{sec:background}

\subsection{Topological terminology and conventions} 

We first review some basic concepts that will be used throughout this
paper, and in the process set some standing conventions.  First, all
\3-manifolds will be assumed connected and orientable unless noted
otherwise.  A \emph{knot} $K$ in a \3-manifold $Y$ is a smoothly
embedded $S^1$ inside of $Y$.  The \emph{exterior} of $K$ is $Y$ with
an open tubular neighborhood about $K$ removed; this is a compact
\3-manifold with boundary a torus.  A $\Q$-homology \3-sphere is a
closed \3-manifold whose rational homology is the same as that of
$S^3$; a $\Z$-homology \3-sphere is defined analogously.  A
$\Q$-homology solid torus is a compact \3-manifold $M$ with boundary a
torus where $H_*(M; \Q) \cong H_*(D^2 \times S^1; \Q)$; this is
equivalent to $M$ being the exterior of a knot in some $\Q$-homology
\3-sphere.  Analogously, a $\Z$-homology solid torus is a compact
\3-manifold $M$ with boundary a torus where
$H_*(M; \Z) \cong H_*(D^2 \times S^1; \Z)$; again, this is equivalent
to $M$ being the exterior of a knot in a $\Z$-homology \3-sphere.

We call a compact orientable surface $F$ in a \3-manifold $M$
\emph{essential} when it is properly embedded, incompressible, and not
boundary parallel; here, incompressible means that
$\pi_1(F) \to \pi_1(M)$ is injective and that $F$ is not a \2-sphere
bounding a \3-ball.

\subsection{Framings and slopes}
\label{sec:framings}

For a $\Q$-homology solid torus $M$, we denote the inclusion map of
its boundary by $\inc \maps \partial M \to M$.  We define a
\emph{homologically natural framing} to be a generating set
$(\mu, \lambda)$ for $H_1(\partial M; \Z)$ where
$\inc_*(\lambda) = 0$ in the rational homology $H_1(M; \Q)$.  While
the \emph{homological longitude} $\lambda$ is defined up to sign,
there are infinitely many choices for $\mu$.

An isotopy class of unoriented essential simple closed curves in
$\partial M$ is called a \emph{slope}.  A slope can be recorded by a
primitive element in $H_1(\partial M; \Z)$ which is well-defined up to
sign.  Once we fix a framing $(\mu, \lambda)$ for $\partial M$, we
shall identify slopes with elements of $\Q \cup \{\infty\}$ via
$p/q \leftrightarrow \pm(p \mu + q \lambda)$.

\subsection{Representation and character varieties} 
\label{sec:repcharvar}

Throughout this paper, we will use $\GC$ to denote the Lie group
$\PSL{2}{\C} \cong \PGL{2}{\C}$.  We now review some basic facts about
representation and character varieties with target group $\GC$; for
details, see e.g.~\cite{HeusenerPorti2004}.  For a compact manifold
$M$, the representation space $R(M) = \Hom(\pi_1(M), \GC)$ is an
affine algebraic set in some $\C^n$.  However, we are usually only
interested in representations up to conjugacy by elements of $\GC$, so
we consider the minimal Hausdorff quotient $X(M)$ of $R(M)$ generated
by the orbits of the conjugation action of $\GC$.  Equivalently,
$X(M)$ is the invariant theory quotient $R(M) \sslash \GC$; hence
$X(M)$ is again an affine algebraic set, which is referred to as the
$\GC$-\emph{character variety} of $M$.  (These algebraic sets are not
always irreducible, but we still refer to them as ``varieties'' for
historical reasons.)  For each group element $\gamma \in \pi_1(M)$,
there is a regular function $\tr^2_\gamma \maps X(M) \to \C$ given by
$\tr^2_\gamma\left([\rho]\right) = \tr\left(\rho(\gamma)\right)^2$; we
must take the square here because the trace of a matrix in $\GC$ is
only defined up to sign.  One can always choose a finite set of
elements in $\pi_1(M)$ so that the corresponding $\tr^2$ functions
give a complete system of coordinates for the affine algebraic set
$X(M)$ \cite[Corollary 2.3]{HeusenerPorti2004}.

We will frequently regard $\GC$ as the group of orientation preserving
isometries of hyperbolic \3-space $\H^3$.  The group $\GC$ acts on
$\Pone$ by M\"obius transformations, in a way that extends the action
on $\H^3$ to the sphere at infinity
$\partial \H^3 = S^2_\infty \cong \Pone$.  A representation
$\rho \in R(M)$ is called \emph{reducible} when $\rho(\pi_1(M))$ fixes
a point in $\Pone$ under the M\"obius action of $\GC$; otherwise
$\rho$ is called \emph{irreducible}.  A character $\chi \in X(M)$ is
called \emph{reducible} if any (equivalently all) representations
$\rho$ mapping to $\chi$ are reducible, and analogously for
\emph{irreducible}.  While non-conjugate representations can have the
same character, this can only happen in the reducible case
\cite[Lemma~3.15]{HeusenerPorti2004}.

Now suppose $M$ is a compact \3-manifold with torus boundary, and let
$\inc \maps \partial M \to M$ be the inclusion map.  By restricting
representations, we get an induced regular map
$\inc^* \maps X(M) \to X(\partial M)$.  We will need the following
fact in the proof of Theorem~\ref{thm:structure}.

\begin{lemma}\label{lem:1d-image}
  The image of $\inc^* \maps X(M) \to X(\partial M)$ has complex
  dimension at most 1.
\end{lemma}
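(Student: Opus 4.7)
The plan is to decompose $X(M)$ into its reducible and irreducible loci and bound the complex dimension of the image of $\inc^*$ on each piece separately.

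On the reducible locus $X^{\mathrm{red}}(M)$, every character factors through the abelianization $H_1(M;\Z)$, so $X^{\mathrm{red}}(M)$ is (up to the inversion $z\mapsto z^{-1}$) the image of $\Hom(H_1(M;\Z), \Cunits)$, and the restriction $\inc^*$ on this locus is induced by pre-composition with the homology inclusion $\inc_* : H_1(\partial M;\Z) \to H_1(M;\Z)$. The classical half-lives-half-dies theorem for a compact oriented $3$-manifold with torus boundary says $\inc_* \otimes \Q$ has rank exactly one, so dually the image of reducible characters in $X(\partial M)$ has complex dimension at most one.

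For each irreducible component $X_0 \subseteq X(M)$ containing an irreducible character, I would bound the derivative of $\inc^*$ at a generic smooth point. On a Zariski open subset $X_0^\circ \subseteq X_0$ every $\chi = [\rho]$ is a smooth point of $X(M)$ with $\rho$ irreducible and $\rho|_{\pi_1(\partial M)}$ non-trivial. At such a $\chi$ the Zariski tangent space to $X(M)$ is naturally $H^1(\pi_1(M); \mathfrak{sl}_2(\C)_{\mathrm{Ad}\,\rho})$ and the derivative of $\inc^*$ is identified with the cohomology restriction $H^1(\pi_1(M); \mathrm{Ad}\,\rho) \to H^1(\pi_1(\partial M); \mathrm{Ad}\,\rho)$. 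A direct computation on the torus shows $\dim_\C H^1(\partial M; \mathrm{Ad}\,\rho) = 2$, and the Killing form on $\mathfrak{sl}_2(\C)$ combined with the cup product makes this a symplectic $\C$-vector space. Poincar\'e--Lefschetz duality for the pair $(M, \partial M)$ then forces the image of the restriction map to be a Lagrangian subspace, hence of complex dimension exactly one. By upper semicontinuity of fibre dimension, $\inc^*(X_0)$ has complex dimension at most one; and in the degenerate case where $\rho|_{\pi_1(\partial M)}$ is trivial on a Zariski open subset of $X_0$ the image is a single point.

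Assembling the two cases shows $\inc^*(X(M))$ is a finite union of subvarieties each of complex dimension at most one, yielding the lemma. The main obstacle is justifying the twisted Lagrangian claim: one needs to check that the cup-product pairing on $H^1(\partial M; \mathrm{Ad}\,\rho)$ is non-degenerate, which in turn requires that a generic irreducible $\rho \in X_0$ send $\pi_1(\partial M)$ to a non-central abelian subgroup of $\GC$. This is the generic behaviour on any component carrying an irreducible character with non-trivial boundary restriction, so the verification goes through component by component.
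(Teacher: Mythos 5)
Your proposal is correct, and it takes a genuinely different route from the paper's. The paper follows Cooper--Long: it passes to the augmented character variety $\augcharvar{M}$ (a degree-two cover, needed because representations to $\GC$ need not lift to $\SL{2}{\C}$), identifies $\augcharvar{\partial M}$ with $\Csquare$, and observes that the $1$-form $\omega = \log|z|\;d\!\arg w - \log|w|\;d\!\arg z$ is not exact on $\Csquare$ but, by a theorem from Hodgson's thesis, pulls back to the exact form $-2\,d\!\vol$ on every component of $\augcharvar{M}$, so no component can have $2$-dimensional image. Your argument is the infinitesimal counterpart of this: the symplectic pairing on $H^1(\partial M;\mathrm{Ad}\,\rho)$ for which the restriction image is Lagrangian is essentially the tangent-space avatar of the form $d\log z\wedge d\log w$ whose imaginary part is $d\omega$. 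What your route buys is independence from Hodgson's volume result and from the augmented variety --- twisted Poincar\'e--Lefschetz duality plus half-lives-half-dies suffice; what the paper's route buys is a single uniform argument for all components, reducible or not, together with the cohomological obstruction it records as a by-product. Two small points to tighten in your write-up. First, the Zariski tangent space of $X(M)$ at $[\rho]$ is in general only \emph{contained} in $H^1(\pi_1(M);\mathrm{Ad}\,\rho)$, but that inclusion is all the rank bound needs, since the differential of $\inc^*\circ t$ on $R(M)$ always factors through the cohomology restriction map $H^1(M;\mathrm{Ad}\,\rho)\to H^1(\partial M;\mathrm{Ad}\,\rho)$. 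Second, the nondegeneracy of the cup-product pairing on $H^1(\partial M;\mathrm{Ad}\,\rho)$ is unconditional --- it is Poincar\'e duality on the torus with the self-dual coefficient system $\mathrm{Ad}\,\rho$; what the behaviour of $\rho|_{\pi_1(\partial M)}$ actually controls is the \emph{dimension} of $H^1(\partial M;\mathrm{Ad}\,\rho)$, which is $2$ when the boundary image is non-central and non-Klein, $0$ in the Klein-four case, and $6$ in the central case that you already set aside (where the image of $\inc^*$ is a point).
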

For $\SL{2}{\C}$-character varieties, rather than the $\GC$ ones we
work with here, the corresponding result is
\cite[Corollary~10.1]{CooperLong1996}.  As not every representation
$\pi_1(M) \to \GC$ lifts to $\SL{2}{\C}$ by
\cite[Theorem~1.4]{HeusenerPorti2004}, we must prove
Lemma~\ref{lem:1d-image} directly.  However, the argument is
essentially identical to the $\SL{2}{\C}$ case, and the proof may
be safely skipped at first reading.

\begin{proof}[Proof of Lemma~\ref{lem:1d-image}]

We identify $\pi_1(\partial M)$ with $\Z\oplus\Z$ via a fixed framing
$(\mu, \lambda)$.  We will view the character variety $X(\partial M)$
as the minimal Hausdorff quotient of
$R(\partial M) = \Hom(\Z\oplus\Z, \GC)$ by the conjugation action.  It
is shown in \cite[Lemma~7.4]{HeusenerPorti2005} that $R(\partial M)$
has exactly two irreducible components.  The first consists exactly of
the conjugacy class of a representation onto a Klein $4$-group whose
non-trivial elements are rotations about three mutually orthogonal
lines in $\H^3$.  The other component consists of representations that
either send both $\mu$ and $\lambda$ to non-parabolic elements with a
common axis, or to parabolic elements with a common fixed point.  By
\cite[Lemma~7.4]{HeusenerPorti2005}, this component is 4\hyp
dimensional and smooth away from the trivial representation.  We will
denote its image in $X(\partial M)$ by $D$.  The conjugacy class of a
non-parabolic representation is closed and isomorphic to the coset
space $\GC/S$, where $S$ is the stabilizer of the axis in $\GC$, and
so the conjugacy class has dimension $2$.  The conjugacy class of a
parabolic representation, on the other hand, is not closed and
contains the trivial representation in its closure.  Thus the
conjugacy class of any parabolic representation maps to the same point
in $X(\partial M)$ as the trivial representation.  These two facts
imply that the complex variety $D$ is $2$-dimensional.

Since $D$ is the only irreducible component of $X(\partial M)$ with
dimension larger than $1$, to prove the lemma it suffices to show that
if $Z$ is an irreducible component of $X(M)$ such that
$\inc^*(Z)\subset D$ then $\inc^*(Z)$ has dimension at most $1$.  For this it
is convenient to pass to a $2$-fold cover of $X(M)$.

Following \cite{Dunfield2003}, we define the \emph{augmented
  representation variety} $\augrepvar{M}$ to be the subalgebraic set
of $R(M) \times \P^1$ consisting of all pairs $(\rho, x)$ where $x$ is
a point of $\Pone$ that is fixed by the image of
$\pi_1(\partial M)$ under $\rho\circ\inc$.  On a typical irreducible
component of $R(M)$ there are generically two points fixed by the
group $\rho(\pi_1(\partial M))$, and so the projection
$(\rho, x) \mapsto \rho$ gives a regular map of degree $2$ onto an
irreducible component of $R(M)$.  There is a natural diagonal action
of $\GC$ on $\augrepvar{M}$ which acts by conjugation on the
representation $\rho$ and by the induced M\"obius transformation on
$\P^1$.  The quotient
$\augcharvar{M} = \augrepvar{M}\sslash\GC$ is the
\emph{augmented character variety} of $M$.

The augmented representation and character varieties of the boundary
torus, $\augrepvar{\partial M}$ and $\augcharvar{\partial M}$, are
defined analogously.  These augmented varieties for $\partial M$ are
in fact irreducible, since the pesky Klein 4-group representations
have no fixed points on $\Pone$ and hence are missing from
$\augrepvar{\partial M}$.  Our choice of framing $(\mu, \lambda)$
determines an identification of $\augcharvar{\partial M}$ with
$\Csquare$ as follows. If $\rho$ is given by
$$
\rho(\mu) = \pm\left(\begin{matrix}z&0\\0&z^{-1}\end{matrix}\right) \mtext{and}
\rho(\lambda) = \pm\left(\begin{matrix}w&0\\0&w^{-1}\end{matrix}\right)\,,
$$
and $\infty$ denotes the point of $\Pone$ with homogeneous coordinates
$[1:0]$, then the $\GC$-orbit of the pair $(\rho,\infty)$ is
identified with the point $(z^2, w^2)$, that is, with the pair
consisting of the holonomies of $\rho(\mu)$ and $\rho(\lambda)$.

Since the conjugacy class of any parabolic representation of
$\pi_1(\partial M)$ has the same image in $X(\partial M)$ as the
trivial representation, when $\augcharvar{\partial M}$ is identified
with $\Csquare$ in this way, any pair
$(\rho, x)\in\augrepvar{\partial M}$ where $\rho$ is parabolic will be
mapped to $(1, 1)$ by the quotient map
$\augrepvar{\partial M}\to\augcharvar{\partial M}$. Also, the deck
transformation for the branched covering
$\augcharvar{\partial M} \to D$ is given by $(z,w)\mapsto(1/z, 1/w)$.

The augmented and unaugmented character varieties fit into the
following commutative diagram
\[
\begin{tikzcd}
\augcharvar{M} \arrow{r}{\auginc^*} \arrow{d} &
   \augcharvar{\partial M} \arrow{d} \\ 
   \SLcharvar{M} \arrow{r}{\inc^*} & 
   \SLcharvar{\partial M}
\end{tikzcd}
\]
in which the vertical maps are induced by the projection
$(\rho, x) \mapsto \rho$.  Since the vertical maps are finite, it
suffices to show that for each irreducible component $\Zhat$ of
$\augcharvar{M}$, its image $\auginc^*(\Zhat)$ is at most
$1$-dimensional.

To prove this we apply the same argument used in \cite[Corollary
10.1]{CooperLong1996} in the case of $\SL{2}{\C}$.  Namely, we
consider the real $1$-form
\[\omega = \log|z|\;d\!\arg w - \log|w|\;d\!\arg z\] defined on $\Csquare$,
viewed as a real manifold. The form $\omega$ is not closed
since
\[d\omega = d\!\log|z|\wedge d\!\arg w - d\!\log|w|\wedge d\!\arg z
.\]
However, since $d\omega$ is the imaginary part of the complex $2$-form
$d\log z \wedge d\log w$, it does restrict to a closed form on any complex
curve in $\Csquare$.  Moreover, it follows from a result in Craig
Hodgson's thesis (see \cite[\S 4.4]{CCGLS}) that $\omega$ pulls back
under $\auginc^*$ to an exact $1$-form on $\Zhat$.  In fact, the
pull-back of $\omega$ is equal to ${-2\mkern1mu}d\!\vol$ where $\vol$
is the real analytic function on $\augcharvar{M}$ that assigns to
$([\rho], z)$ the volume of the representation $\rho$, as defined in
\cite[\S 2.1]{Dunfield1999}.  (In particular, there is a mysterious
cohomology class which obstructs a given complex curve in $\Csquare$
from arising as a component of the image of $\auginc^*$.)  To complete
the argument, we just observe as in $\cite{CooperLong1996}$ that since
$\omega$ is not exact on $\Csquare$, but pulls back to an exact
$1$-form on $\Zhat$, we would obtain a contradiction if
$\auginc^*(\Zhat)$ were dense in $\Csquare$.  Thus $\auginc^*(\Zhat)$
must be at most $1$-dimensional, as required.
\end{proof}

\subsection{Real points}\label{sec:realpoints}

We will need a few basic facts from real algebraic geometry; for a
general reference, see \cite{BasuPollackRoy2006}.  If $X$ is an affine
algebraic set in $\C^n$, we denote the real points $X \cap \R^n$ by
$X_\R$.  When $X$ can be cut out by polynomials with real
coefficients, we say that $X$ is \emph{defined over $\R$}; in this
case, the set $X$ is invariant under coordinate-wise complex
conjugation $\tau \maps \C^n \to \C^n$, and $X_\R$ is precisely the
set of fixed points of $\tau$.  If $X$ is a quasi-projective variety
in $\projsp^n(\C)$ that can be defined by real polynomials, then the
real points $X_\R$ are again the fixed points of the involution $\tau$
on $\projsp^n(\C)$ which acts by complex conjugation of the projective
coordinates; in any affine chart whose hyperplane at infinity is
defined by a real linear form, the points of $X_\R$ are precisely the
points of $X$ whose coordinates are real.

In real algebraic geometry, the projective space
$\projsp^n(\R)$ is isomorphic to an \emph{affine} algebraic
variety, and hence any quasi-projective variety is isomorphic to an
affine one.  When working with real algebraic varieties, it is often
natural to consider the larger collection of semialgebraic sets, that
is, those defined by polynomial inequalities, and to consider
properties such as irreducibility of a real algebraic set in that enlarged category.
The dimension of a real semialgebraic set is equal to its topological
dimension. Here, we will need only the following three results.  

\begin{proposition}[\protect{\cite[Theorem 5.43]{BasuPollackRoy2006}}]
  \label{prop:real1}
  Suppose $X$ is an affine real semialgebraic set which is closed and
  bounded.  If the dimension of $X$ is at most 1, then $X$ is
  homeomorphic to a finite graph, where graphs are allowed to have
  isolated vertices.
\end{proposition}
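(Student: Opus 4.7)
The plan is to invoke the semialgebraic triangulation theorem: every closed and bounded semialgebraic set $X \subset \R^n$ admits a finite semialgebraic triangulation, in the sense that there exist a finite simplicial complex $K$ and a semialgebraic homeomorphism $\Phi \maps |K| \to X$ whose restriction to each open simplex is a semialgebraic diffeomorphism onto a semialgebraic subset of $X$. This is a standard result in real algebraic geometry and is proved via cylindrical algebraic decomposition: first decompose $\R^n$ into finitely many semialgebraic cells compatible with a polynomial system defining $X$; then refine so that every cell meeting $X$ is semialgebraically homeomorphic to an open ball of the correct dimension; and finally assemble the cells into a simplicial complex using induction on dimension, leaning on the boundedness of $X$ to keep the process finite.

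Granted such a triangulation, the proposition is immediate. The topological dimension of $|K|$ equals the largest dimension of a simplex in $K$, and semialgebraic homeomorphisms preserve dimension, so the hypothesis $\dim X \le 1$ forces $K$ to consist only of $0$-simplices and $1$-simplices. By definition, such a simplicial complex is a finite graph, where the $0$-simplices that are not faces of any $1$-simplex serve as the isolated vertices permitted in the statement.

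The principal obstacle is the triangulation theorem itself, whose verification is nontrivial; granting it, the rest is a bookkeeping exercise. Since we only need the one-dimensional case, a lighter alternative would be to stratify $X$ into smooth semialgebraic strata, use the dimension hypothesis to conclude that each top-dimensional stratum is a finite disjoint union of semialgebraic arcs homeomorphic to open intervals, and then invoke closedness and boundedness of $X$ to show that each such arc has two (possibly coincident) endpoints in the $0$-dimensional stratum, so that $X$ is built from finitely many vertices and finitely many arcs glued at their endpoints, i.e., a finite graph.
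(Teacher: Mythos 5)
Your proposal is correct and matches the paper's approach: the paper offers no proof of its own but simply cites the semialgebraic triangulation theorem (Theorem 5.43 of Basu--Pollack--Roy), which is exactly the result you invoke, and the remaining step---that a finite simplicial complex of dimension at most $1$ is a finite graph with possible isolated vertices---is the same routine bookkeeping. The only caveat is that your sketch of the triangulation theorem itself is schematic, but since the paper treats that theorem as a black box, this is not a gap relative to the paper.
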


\begin{proposition}[\protect{\cite[Theorem 5.48]{BasuPollackRoy2006}}]
  \label{prop:real3}
  A real semialgebraic set is locally path connected.  
\end{proposition}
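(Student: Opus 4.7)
The plan is to reduce to the case of a finite simplicial complex via the semialgebraic triangulation theorem, since local path connectedness is then immediate and is preserved by homeomorphism. Local path connectedness is a local topological property, so it suffices to produce, for each point $x$ of a semialgebraic set $X\subset\R^n$ and each $\epsilon>0$, a path-connected semialgebraic neighborhood of $x$ in $X$ of diameter less than $\epsilon$.

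First I would replace $X$ by its intersection with a small closed ball $\bar{B}(x,\delta)$ for $\delta<\epsilon$, which is again semialgebraic and now closed and bounded. I would then invoke the semialgebraic triangulation theorem (Theorem 5.43 of \cite{BasuPollackRoy2006} or the version in van~den~Dries's book on o-minimal structures), which produces a finite abstract simplicial complex $K$ and a semialgebraic homeomorphism $h\colon |K|\to X\cap\bar{B}(x,\delta)$. Pulling back, $x$ corresponds to a point $p\in|K|$ lying in the relative interior of a unique simplex, and the open star of that simplex in $K$ is an open neighborhood of $p$ in $|K|$ which is star-convex, hence path connected.

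The main technical step is to ensure that the neighborhood produced is honestly open in $X$ (not merely in $X\cap\bar{B}(x,\delta)$) and of small diameter. For openness, I would use $X\cap B(x,\delta)$ in place of the closed ball in the last step: the homeomorphism $h$ restricts to a homeomorphism onto the open subset $X\cap B(x,\delta)$, so open path-connected neighborhoods of $p$ in $|K|$ contained in $h^{-1}(X\cap B(x,\delta))$ transfer to open path-connected neighborhoods of $x$ in $X$ of diameter less than $2\epsilon$. For the diameter control, one may simply choose $\delta$ in advance so that $2\delta<\epsilon$.

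The main obstacle is really just invoking the triangulation theorem, which is a nontrivial structural result but completely standard in semialgebraic geometry; the alternative is to use the cylindrical algebraic decomposition directly, writing $X$ near $x$ as a finite disjoint union of semialgebraic cells each homeomorphic to an open cube, and then joining nearby points to $x$ by paths through the cells whose closures contain $x$ via the curve selection lemma. Either route proves the statement.
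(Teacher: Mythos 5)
The paper offers no proof of this proposition: it is quoted directly from Basu--Pollack--Roy as their Theorem~5.48, so there is no in-paper argument to compare against. Your triangulation argument is correct and is essentially the standard derivation of the fact (and close to how the cited source obtains it, via semialgebraic triangulation of a closed bounded semialgebraic set). One simplification worth noting: the ``technical step'' about landing inside $h^{-1}\bigl(X\cap B(x,\delta)\bigr)$ is unnecessary, and as written would actually require subdividing $K$ until the open star of $p$ is small enough. You can avoid this entirely: since $X\cap\overline{B}(x,\delta)$ already contains the set $X\cap B(x,\delta)$, which is open in $X$ and contains $x$, any subset of $X\cap\overline{B}(x,\delta)$ that is open in $X\cap\overline{B}(x,\delta)$ and contains $x$ is automatically a neighborhood of $x$ in $X$. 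The image under $h$ of the open star of the simplex carrying $p$ is such a set, it is path connected (star-shaped about $p$), and it has diameter at most $2\delta$; so choosing $2\delta<\epsilon$ at the outset finishes the proof. With that cleanup the argument is complete, and your alternative route through cylindrical decomposition plus curve selection would also work.
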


\begin{proposition}\label{prop:real2}
  Suppose $X$ is a complex affine algebraic curve defined over $\R$.  If
  $x_0$ is a smooth point of $X$ that lies in $X_\R$, then $x_0$ has a
  classical neighborhood in $X_\R$ which is a smooth arc.
\end{proposition}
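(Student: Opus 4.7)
The plan is to prove this via the implicit function theorem in both its complex and real-analytic forms, exhibiting $X_\R$ locally as the graph of real-analytic functions of a single real variable. First I would choose real polynomials $f_1, \ldots, f_k \in \R[z_1, \ldots, z_n]$ cutting out $X$ in a Zariski neighborhood of $x_0$. Since the complex curve $X$ has codimension $n-1$ and $x_0$ is a smooth point, the Jacobian $(\partial f_i / \partial z_j)(x_0)$ has rank $n-1$, and after relabeling coordinates I may assume that the $(n-1) \times (n-1)$ minor $J(x_0) = \bigl(\partial f_i / \partial z_j(x_0)\bigr)_{1 \le i,j \le n-1}$ is invertible. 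Crucially, because $x_0$ lies in $\R^n$ and the $f_i$ have real coefficients, $J(x_0)$ is a matrix with real entries.

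The complex implicit function theorem then yields a classical neighborhood $U$ of $x_0$ in $\C^n$ and holomorphic functions $g_1, \ldots, g_{n-1}$ on a disk $V$ around $(x_0)_n$ such that
\[
X \cap U = \bigl\{ \bigl(g_1(w), \ldots, g_{n-1}(w), w\bigr) : w \in V \bigr\}.
\]
Applying the real-analytic implicit function theorem to the same system of equations on an open real interval $I \subset V \cap \R$ containing $(x_0)_n$ produces real-analytic solutions which, by the uniqueness clause in the complex IFT, must agree with the restrictions of the $g_i$ to $I$; hence $g_i(I) \subset \R$ for each $i$. Finally, a point of $X \cap U$ lies in $X_\R$ exactly when all its coordinates are real, which forces its last coordinate $w$ to lie in $I$; but then the remaining coordinates $g_i(w)$ are automatically real. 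Therefore $X_\R \cap U$ coincides with the image of the real-analytic embedding $t \mapsto (g_1(t), \ldots, g_{n-1}(t), t)$ of $I$, which is a smooth arc.

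There is no serious obstacle here; the one mild subtlety that I want to handle with care is ensuring that \emph{every} real point of $X$ in a classical neighborhood of $x_0$ is captured by this parametrization, rather than only those produced directly by the real IFT. Using the complex IFT first to describe all of $X \cap U$, and then invoking the real IFT only to see that the parametrization takes real values on the real interval $I$, handles this point cleanly.
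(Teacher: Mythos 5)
Your argument is correct, but it takes a genuinely different route from the paper's. You work in local coordinates: choosing real generators of the ideal of $X$ (note that you do need generators of the ideal, not merely polynomials cutting out $X$ set-theoretically, for the Jacobian criterion to give rank $n-1$ at the smooth point $x_0$), you observe that the relevant Jacobian minor is a real invertible matrix, apply the complex implicit function theorem to write $X$ near $x_0$ as a holomorphic graph over a disk, and then apply the real-analytic implicit function theorem together with the uniqueness clause to see that the graphing functions take real values on the real axis. Your care in first describing \emph{all} of $X\cap U$ via the complex IFT, and only then identifying the real points, correctly handles the one genuine subtlety, namely that no real points of $X$ near $x_0$ escape the parametrization. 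The paper instead argues globally and coordinate-freely: it deletes the finitely many singular points, notes that complex conjugation $\tau$ restricts to an orientation-reversing involution of the resulting smooth surface, and concludes via a $\tau$-invariant Riemannian metric that the fixed-point set is a smooth $1$-manifold (the fixed set of an isometric involution is totally geodesic, and orientation-reversal forces the $+1$-eigenspace of $d\tau$ to be one-dimensional). Your approach is more elementary and yields slightly more, an explicit real-analytic parametrization of the arc; the paper's is shorter and exposes the conceptual reason, namely that $X_\R$ is the fixed locus of an anti-holomorphic involution on a Riemann surface.
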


\begin{proof}
The curve $X$ has finitely many singular points which are permuted by
$\tau$. Let $X'$ be the complementary set of smooth points.  Now
$X'$ is a smooth surface and the restriction of $\tau$ to $X'$ is an
orientation reversing involution.  Using a Riemannian metric on $X'$
which is invariant under $\tau$, it is easy to see that $X'_\R$ is a
smooth \1-manifold, proving the proposition.
 \end{proof}

\subsection{Real representations}

Throughout this paper, we will set $\G = \PSL{2}{\R}$ and
$K = \PSU_2$, where both groups are viewed as subgroups of
$\GC = \PSL{2}{\C}$.  We will also occasionally consider the subgroup
$\PGL{2}{\R}$ in $\GC$, which makes sense via the identification of
$\PSL{2}{\C}$ with $\PGL{2}{\C}$; geometrically, the subgroup
$\PGL{2}{\R}$ is the full stabilizer in $\GC$ of the copy of $\H^2$
fixed by $G$ (in particular, $\PGL{2}{\R}$ includes orientation
reversing isometries of $\H^2$).  We will view
$\RG{M} = \Hom(\pi_1(M), G)$ as a subset of $R(M)$, and we will denote
by $\XG{M}$ the image of $\RG{M}$ under the quotient map
$t: R(M) \to X(M)$.  Thus $\XG{M}\subset X_\R(M)$.  By \cite[Lemma
10.1]{HeusenerPorti2004}, in fact the set $X_\R(M)$ is the image of
$R_{\PGL{2}{\R}}(M) \cup R_K(M)$ under $t$.  Since $\XG{M}$ is the
image of a real algebraic set under a polynomial map, it is a real
semialgebraic subset of $X_\R(M)$.  Note that $\XG{M}$ is \emph{not}
the quotient of $\RG{M}$ under the action of $\G$ by conjugation, even
neglecting the issue of nonclosed orbits; rather, it is essentially
the quotient of $\RG{M}$ under conjugation by the larger group
$\PGL{2}{\R}$.  Geometrically, the point is that $\PGL{2}{\R}$, not
$\G$, is the full stabilizer in $\GC = \Isom^+(\H^3)$ of the standard
$\H^2$ in $\H^3$.  Since $G$ can be characterized as the subgroup of
$\PGL{2}{\R}$ which preserves the orientation of $\H^2$, considering
representations into $\G$ up to conjugacy in $\GC$ amounts to
forgetting the orientation on $\H^2$.  We also use $X_K(M)$ to denote
the image of $R_K(M)$ in $X_\R(M)$; in this case, the set $X_K(M)$ is
the ordinary quotient $R_K(M)/K$.  Let $S$ be the subgroup
$\PSO_2 = G \cap K \cong S^1$, which is the stabilizer of a point in
$\H^2$ under the action of $G$.  As usual, we use $X_S(M)$ to denote
the image of $R_S(M)$ in $X_\R(M)$; note here that any representation
in $R_S(M)$ factors through $H_1(M; \Z)$ since $S$ itself is abelian.
The next two lemmas will be used in the proof of
Theorem~\ref{MainTheoremTwo}.

\begin{lemma}\label{lem:real_red}
  The intersection $X_K(M) \cap \XG{M}$ is exactly $X_S(M)$.  In
  particular, if $[\rho]$ is in $X_K(M) \cap \XG{M}$ then $[\rho]$ is
  reducible over $\GC$.
\end{lemma}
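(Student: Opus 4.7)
The plan is to establish the two inclusions separately, with the nontrivial direction handled by splitting into the reducible and irreducible cases.

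The inclusion $X_S(M) \subset X_K(M) \cap \XG{M}$ is immediate because $S \subset K$ and $S \subset G$ by definition. For the reverse inclusion, fix $\chi \in X_K(M) \cap \XG{M}$ with representatives $\rho_K \maps \pi_1(M) \to K$ and $\rho_G \maps \pi_1(M) \to G$, and split into cases on the irreducibility of $\chi$ over $\GC$.

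First suppose $\chi$ is reducible over $\GC$. Then $\rho_K$ is reducible, so its image has a common fixed point $p \in \Pone = \partial \H^3$. Every element of $K = \PSU_2$ acts on $S^2_\infty$ as a rotation of the round sphere, so each $\rho_K(\gamma)$ is a rotation that fixes $p$; such rotations all share the axis through $p$ and its antipode, hence lie in the stabilizer of that axis, which is (conjugate to) $S^1 = S$. Thus $\rho_K$ factors through a conjugate of $S$, and $\chi \in X_S(M)$.

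Now suppose $\chi$ is irreducible. Then by \cite[Lemma~3.15]{HeusenerPorti2004}, $\rho_K$ and $\rho_G$ are conjugate in $\GC$, so there is some $g \in \GC$ with $\rho_G = g\rho_K g^{-1}$. Since the image of $\rho_K$ fixes the center $p_0 \in \H^3$ of $K$, the image of $\rho_G$ fixes $p := g \cdot p_0 \in \H^3$. On the other hand $\rho_G$ has image in $G$ and so preserves the standard copy of $\H^2 \subset \H^3$. The key observation is that every orientation-preserving isometry of $\H^2$ that fixes a point of $\H^3$ must be elliptic (as a parabolic or hyperbolic element of $G$ fixes no point of $\H^3$), and its fixed axis in $\H^3$ is the geodesic perpendicular to $\H^2$ through its fixed point on $\H^2$. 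For all $\rho_G(\gamma)$ to share the fixed point $p$, their axes must all pass through $p$; but any two distinct geodesics perpendicular to $\H^2$ are disjoint in $\H^3$, so all these axes coincide. They therefore meet $\H^2$ in a common point $q$, and each $\rho_G(\gamma)$ fixes $q$. Hence $\rho_G$ takes values in the stabilizer of $q$ in $G$, which is conjugate to $S$, and $\chi \in X_S(M)$.

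For the ``in particular'' clause, if $[\rho] \in X_K(M) \cap \XG{M} = X_S(M)$ then (up to conjugation in $\GC$) the image of $\rho$ consists of rotations of $\H^3$ about a common geodesic; both endpoints of that geodesic on $\Pone$ are fixed, so $\rho$ is reducible over $\GC$. The main obstacle, and the only nontrivial step, is the geometric assertion in the irreducible case that two perpendiculars to $\H^2$ cannot meet in $\H^3$ unless they coincide; this is what upgrades a common fixed point in $\H^3$ for a $G$-valued representation to a common fixed point in $\H^2$.
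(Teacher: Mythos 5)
Your proof is correct, and its skeleton matches the paper's: both arguments split on whether the character is reducible over $\GC$, and both reduce the irreducible case to finding a common fixed point in $\H^2$ and then passing to the perpendicular geodesic in $\H^3$. The differences are in how each case is discharged. In the irreducible case the paper notes that the image consists of elliptic elements of $G$ and invokes the classical fact (cited to Beardon) that a subgroup of $\PSL{2}{\R}$ consisting solely of elliptics has a global fixed point in $\H^2$; you instead get the global fixed point in $\H^3$ for free from the conjugacy into $K$, and then your observation that distinct perpendiculars to $\H^2$ are disjoint forces all the rotation axes to coincide. This is a nice self-contained substitute for the citation. (One cosmetic point: you should restrict the axis argument to the nontrivial $\rho_G(\gamma)$, and you could note explicitly that this case is in fact vacuous, since landing in a conjugate of $S$ contradicts irreducibility -- though your implication is valid as stated.) In the reducible case the paper diagonalizes over $\GC$ and uses the constraint $\tr^2_\gamma \in [0,4]$ to force the diagonal entries onto the unit circle, whereas you argue geometrically that rotations of the visual sphere of $K$ sharing a fixed point share an axis; both are elementary and correct. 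Net effect: same strategy, with your version trading an external reference and a small trace computation for two short synthetic-geometry observations.
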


\begin{proof}
Consider $[\rho] \in X_K(M) \cap \XG{M}$.  If $[\rho]$ is irreducible
over $\GC$, then any two representatives of $[\rho]$ are conjugate in
$\GC$, so we can assume that $\rho \in \RG{M}$ and that every
$\rho(\gamma)$ is elliptic or trivial as $\rho$ is also conjugate into
$R_K(M)$.  However, every subgroup of $\G$ consisting solely of
elliptic elements has a global fixed point $x_0$ in $\H^2$ (see
e.g.~\cite[Theorem~4.3.7]{Beardon1983}). The representation $\rho$
then fixes pointwise the geodesic in $\H^3$ that is perpendicular to
$\H^2$ and contains $x_0$; in particular, it is reducible over $\GC$,
contradicting our initial assumption.  So we have reduced to the case
where $[\rho]$ is reducible over $\GC$, and there we can choose the
representative $\rho$ to be diagonal.  As $[\rho]$ is in $X_K(M)$, we
have $\tr_\gamma^2(\rho)$ in $[0, 4]$ for all $\gamma \in \pi_1(M)$.
A diagonal matrix $A$ in $\GC$ with $\tr^2(A)$ in $[0, 4]$ has nonzero
entries on the unit circle, and so $\rho$ comes from a homomorphism
$\pi_1(M) \to S^1/\{\pm 1\}$.  In particular, the representation
$\rho$ is conjugate into $R_S(M)$ as desired.
\end{proof}

\begin{lemma}\label{lem:pathlift}
  The map $t \maps R(M) \to X(M)$ has the weak path lifting property,
  that is, given a path $c \maps I \to X(M)$ there is a $\ctil \maps I
  \to R(M)$ with $c = t \circ \ctil$.  The same is true for its
  restrictions $R_K(M) \to X_K(M)$, $\RG{M} \to \XG{M}$, and
  $R_{\PGL{2}{\R}}(M) \to  X_{\PGL{2}{\R}}(M)$.   Moreover, if $c(0)$
  is an irreducible character, then we can require $\ctil(0)$ to be
  any specified representation in $t^{-1}\left(c(0)\right)$.   
\end{lemma}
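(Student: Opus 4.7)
The entire setup fits within real semialgebraic geometry: viewing $\C^N$ as $\R^{2N}$, the representation space $R(M)$ is a real algebraic set, $t$ is a polynomial map, and $X(M) = t(R(M))$ is real semialgebraic by the Tarski--Seidenberg theorem. The analogous statements hold for the three restricted maps, since $K$, $G$, and $\PGL{2}{\R}$ are real algebraic subgroups of $\GC$. My strategy is to apply Hardt's semialgebraic triviality theorem to produce local semialgebraic sections of $t$, and then patch them along $I$ via compactness together with path-connectedness of the fibers.

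\textbf{Local sections.} Hardt's theorem provides, for each of the four maps $f \maps A \to B$ in the statement, a finite semialgebraic partition $B = \sqcup_i B_i$ into locally closed strata such that $f \maps f^{-1}(B_i) \to B_i$ is semialgebraically trivial, i.e., there is a semialgebraic homeomorphism $f^{-1}(B_i) \cong B_i \times F_i$ compatible with $f$ and the first projection. When $b_0$ lies in an \emph{open} stratum $B_0$, any small enough semialgebraic neighborhood $U$ of $b_0$ lies in $B_0$, and the product structure yields a continuous semialgebraic section $\sigma \maps U \to A$ through any prescribed $a_0 \in f^{-1}(b_0)$ --- simply the section whose ``fiber coordinate'' in $F_0$ equals that of $a_0$. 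Crucially, an irreducible character always lies in such an open stratum: the $\GC$-stabilizer of an irreducible representation is trivial (the center of $\PSL{2}{\C}$ is trivial), so the orbit has maximal dimension and the local dimensional structure of $B$ is constant at $b_0$. The same holds for the restricted maps, where one uses the corresponding real subgroup stabilizers.

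\textbf{Path lifting and patching.} Given $c \maps I \to B$ and a starting point $\rho_0 \in f^{-1}(c(0))$ (the prescribed value when $c(0)$ is irreducible; otherwise any value afforded by Hardt's trivialization over the stratum containing $c(0)$), construct $\ctil$ inductively. Set $\ctil(0) = \rho_0$. Having defined $\ctil$ continuously on $[0, s_k]$, apply the local section construction at $c(s_k)$ through $\ctil(s_k)$ to obtain $\sigma_k$ on a neighborhood $U_k$ of $c(s_k)$, and extend $\ctil$ on $[s_k, s_{k+1}]$ via $\sigma_k \circ c$, with $s_{k+1}$ chosen so that $c([s_k, s_{k+1}]) \subset U_k$. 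When $c$ passes through reducible characters lying on lower-dimensional strata, the patching uses path-connectedness of individual fibers together with semialgebraic triangulation of $f$: each fiber contains a unique closed orbit in the closure of all its orbits, so is a connected semialgebraic set and hence path-connected by Proposition~\ref{prop:real3}. Compactness of $I$ guarantees that finitely many iterations yield a continuous lift $\ctil \maps I \to A$ with $f \circ \ctil = c$. The argument is identical for each of the four maps.

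\textbf{Main obstacle.} The hardest point is producing local sections through specified starting points and ensuring continuity across stratum transitions --- Hardt's theorem directly handles only trivialization over individual strata, and bridging between strata requires the path-connectedness of fibers and care with the semialgebraic structure. Once these ingredients are in place, the specified-starting-point clause reduces simply to the observation that an irreducible character lies in an open stratum, which is where the hypothesis on $c(0)$ enters.
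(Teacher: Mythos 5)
Your strategy is genuinely different from the paper's: the paper does not prove the main claim at all, but cites it --- \cite[Section II.6]{Bredon1972} for $K$, \cite[Corollary 3.3]{KraftPetrieRandall1989} for $\GC$, and \cite[Lemma 2.1]{BiswasLawtonRamras2015} for the two real forms --- and disposes of the ``moreover'' clause by noting that all representations above an irreducible character are conjugate, so one may conjugate an arbitrary lift to arrange $\ctil(0) = \rho_0$. Your attempt to prove path lifting from scratch via Hardt triviality has a genuine gap at exactly the step you flag as hardest. Hardt's theorem trivializes $f$ over each stratum $B_i$ \emph{as a subset of $B_i$ only}; when $c(s_k)$ lies in a non-open stratum, the resulting ``section'' is defined only on a neighborhood of $c(s_k)$ inside that stratum, not on a neighborhood $U_k$ in $B$. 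So the inductive step $\ctil|_{[s_k,s_{k+1}]} = \sigma_k \circ c$ is unavailable there, and the compactness argument collapses: since $c$ is merely continuous (not semialgebraic), $c^{-1}(\text{lower strata})$ can be an arbitrary closed subset of $I$ --- a Cantor set, say --- so no finite subdivision separates the strata. Even in the benign case where $c$ meets a lower stratum only at an isolated parameter $s_0$, you have not shown the partial lift built from the trivialization over the adjacent open stratum converges as $s \to s_0$; the trivialization degenerates at the stratum boundary and the fibers (e.g.\ copies of the non-compact group $\GC$) allow the lift to escape to infinity. Path-connectedness of the fiber over $c(s_0)$ does not bridge this. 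The actual proofs you are implicitly replacing (Bredon's via slices for compact groups, Kraft--Petrie--Randall's via the Luna stratification) exist precisely to control this transition.

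Two smaller points. The parenthetical ``the $\GC$-stabilizer of an irreducible representation is trivial (the center of $\PSL{2}{\C}$ is trivial)'' is false: the stabilizer is only finite in general (e.g.\ a representation onto a Klein four-group is irreducible with stabilizer that Klein four-group; cf.\ \cite[Proposition~3.16(i)]{HeusenerPorti2004}), and triviality of the center is irrelevant. Relatedly, it is not automatic that an irreducible character lies in an \emph{open} stratum of whatever partition Hardt's theorem hands you; you would need to argue this (say, by refining the stratification using constancy of orbit type on the irreducible locus). For the ``moreover'' clause the paper's route is both correct and much shorter: since every element of $t^{-1}(c(0))$ is conjugate to every other, post-conjugate the entire lifted path.
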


\begin{proof}
With regards to the main claim, for the group $K$ this is
\cite[Section II.6]{Bredon1972}, for the group $\GC$ this is
\cite[Corollary 3.3]{KraftPetrieRandall1989}, and the other two cases
follow from \cite[Lemma 2.1]{BiswasLawtonRamras2015}.  The fact that
we can specify $\ctil(0)$ when $c(0)$ is irreducible is simply because
in this case everything in $t^{-1}\left(c(0)\right)$ is actually
conjugate.
\end{proof}

The next lemma is a comforting fact, but it is not needed for any of
the main results in this paper; indeed, we use it only in
Lemma~\ref{lem:smallnoideal}, which is just a remark to justify a claim
about the examples in Section~\ref{sec:menagerie}; you should
therefore skip the proof at first reading.

\begin{lemma}\label{lem:closed}
  The subsets $X_K(M)$ and $\XG{M}$ are closed in $X(M)$ in the
  classical topology.  
\end{lemma}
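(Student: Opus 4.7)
The plan is to handle the two subsets separately: for $X_K(M)$ a compactness argument suffices, while $\XG{M}$ requires more care since $G = \PSL{2}{\R}$ is noncompact. For $X_K(M)$: since $K = \PSU_2$ is compact and $\pi_1(M)$ is finitely generated by, say, $n$ elements, $R_K(M) = \Hom(\pi_1(M), K)$ embeds as a closed subset of $K^n$ and is therefore compact. Its continuous image $X_K(M) = t(R_K(M))$ in the Hausdorff space $X(M)$ is then compact, and in particular closed.

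For $\XG{M}$, I will take a sequence $\chi_n \in \XG{M}$ converging to $\chi \in X(M)$, choose lifts $\rho_n \in \RG{M}$, and show $\chi \in \XG{M}$. Since every element of $G$ has a real lift to $\SL{2}{\R}$ and therefore real trace, $\tr^2_\gamma(\chi) = \lim \tr^2_\gamma(\chi_n) \geq 0$ for every $\gamma \in \pi_1(M)$; in particular $\chi \in X_\R(M)$. I then split into cases according to whether $\chi$ is reducible or irreducible.

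If $\chi$ is reducible, a diagonal $\GC$-conjugate representative has the form $\rho(\gamma) = \mathrm{diag}(z_\gamma, z_\gamma^{-1})$ and factors through $H_1(M; \Z) \to \Cunits$. The inequality $(z_\gamma + z_\gamma^{-1})^2 \geq 0$ forces $z_\gamma \in \Runits \cup S^1$ for every $\gamma$, and a short product computation shows that any subgroup of $\Cunits$ contained in $\Runits \cup S^1$ must lie entirely in $\Runits$ or entirely in $S^1$. In the first case $\rho$ is $\GC$-conjugate into the real diagonal subgroup of $\SL{2}{\R}$, and in the second into $S = \PSO_2 \subset G$; either way $\chi \in \XG{M}$.

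If $\chi$ is irreducible, I choose $\gamma_1, \gamma_2 \in \pi_1(M)$ whose images under any representative of $\chi$ generate an irreducible subgroup of $\GC$. Since $G$ is normal in $\PGL{2}{\R}$, I may replace each $\rho_n$ by a $\PGL{2}{\R}$-conjugate (which keeps it in $\RG{M}$) so that $\rho_n(\gamma_1)$ and $\rho_n(\gamma_2)$ sit in a prescribed canonical position in $\H^2$; the three real parameters of $\PGL{2}{\R}$ suffice to pin this down. After normalization, continuity of the trace-squared coordinates forces the normalized $\rho_n$ to converge entrywise to some $\rho \in G^n$ with character $\chi$, and since $G$ is closed in $\GC$ the limit lies in $\RG{M}$, giving $\chi = t(\rho) \in \XG{M}$. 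The main technical obstacle is this normalization step, which requires a case analysis based on the Jordan types (hyperbolic, elliptic, or parabolic in $G$) of the two elements; but irreducibility of $\chi$ provides the rigidity needed to choose a consistent canonical form for all $n$ sufficiently large.
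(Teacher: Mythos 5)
Your treatment of $X_K(M)$ (compact image of a compact set) and of the reducible case (diagonal representative with entries forced into $\Runits \cup S^1$, which must lie wholly in one factor) is correct and essentially identical to the paper's proof. The irreducible case is where you diverge from the paper, and it is also where there is a genuine gap.

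The gap is the sentence ``continuity of the trace-squared coordinates forces the normalized $\rho_n$ to converge entrywise to some $\rho \in G^n$ with character $\chi$.'' This is precisely the point that needs proof, and irreducibility of $\chi$ over $\GC$ is not by itself enough. By \cite[Lemma~10.1]{HeusenerPorti2004}, a real character of $X(M)$ is realized by a representation into $\PGL{2}{\R}$ \emph{or} into $K = \PSU_2$, and by Lemma~\ref{lem:real_red} these possibilities are mutually exclusive for irreducible characters. So a priori your limit $\chi$ could be realizable only in the compact real form $\PSU_2$; in that case \emph{no} choice of $\PGL{2}{\R}$-conjugates of the $\rho_n$ can converge in $\RG{M}$, since a limit would exhibit $\chi$ as a $G$-character. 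In other words, your asserted convergence is logically equivalent to excluding the $\PSU_2$-only case, which is the actual content of this half of the lemma and which the paper spends its central paragraph on (the argument that the rotation axes of two elliptics $\rho(\gamma_1), \rho(\gamma_2)$ fixing a common point of $\H^3$ cannot be limits of axes that are both perpendicular to an invariant copy of $\H^2$). Concretely, in your normalization the relevant parameter is the distance $d_n$ between the fixed points of $\rho_n(\gamma_1), \rho_n(\gamma_2)$ in $\H^2$: the trace of the product constrains $\cosh d_n$, and one must argue that the limiting value stays in $[1,\infty)$ and is not $1$ (which would force a reducible limit). That argument can be made, but it is the theorem, not a consequence of continuity.

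Two further points you would need to address to complete your route. First, the phrase ``canonical position in $\H^2$'' presupposes a normal form that varies continuously with the traces, but over $\R$ the normal form depends on the Jordan type (an elliptic cannot be put in the same slice as a parabolic or hyperbolic), and the types of $\rho_n(\gamma_i)$ can differ from those of the limit when $\tr^2_{\gamma_i}(\chi) \in \{0,4\}$; you must choose $\gamma_1,\gamma_2$ so that $\chi$ restricted to $\langle\gamma_1,\gamma_2\rangle$ is irreducible \emph{and} these degenerate trace values are avoided, which itself requires a small argument. Second, even granting a normal form, you need the extension from $\langle\gamma_1,\gamma_2\rangle$ to all of $\pi_1(M)$ to converge; this follows from the standard spanning/trace-identity argument once the restricted pair converges irreducibly, but it should be said. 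The paper avoids all of this by combining \cite[Lemma~10.1]{HeusenerPorti2004} with local path-connectedness of semialgebraic sets (Proposition~\ref{prop:real3}) and the path-lifting Lemma~\ref{lem:pathlift}, reducing the problem to the fact that $G$ is a connected component of $\PGL{2}{\R}$.
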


\begin{proof}
For ease of notation, we set $\GG = \PGL{2}{\R}$ and
$\Gamma = \pi_1(M)$, and also surpress the manifold $M$ from our
representation and character varieties.  The subset $X_K$ is compact
since it is the image of the compact set $R_K$ under a continuous map,
and so we turn immediately to $X_G$.  Given $\chi \in \Xbar_G$, which
must be in $X_\R$, we need to show that $\chi$ is in $X_G$.  We give
separate arguments depending on whether $\chi$ is reducible over
$\GC$.

First, suppose $\chi$ is reducible over $\GC$.  Let $\rho$ be a
diagonal representation into $\GC$ with character $\chi$.  The
top-left entry of $\rho$ gives a homomorphism
$\psi \maps \Gamma \to \Cunits/\{\pm 1\}$.  Since $\chi \in \Xbar_G$,
we have that $\tr^2_\gamma(\rho) \in [0, \infty)$ for all
$\gamma \in \Gamma$.  Consequently, all $\psi(\gamma)$ are in
$S^1 \cup \Runits$.  In fact, the image $\psi(\Gamma)$ must be
contained entirely in one of $S^1$ or $\Runits$, as otherwise we can
easily find a $\psi(\gamma)$ that is not in $S^1 \cup \Runits$.  If
$\psi(\Gamma)$ is in $S^1$, then $\rho$ is conjugate into $S \leq G$,
and if instead $\psi(\Gamma)$ is in $\Runits$ then $\rho$ is already
in $R_G$.  Thus, when $\chi$ is reducible we have shown that
$\chi \in X_G$ as desired.

Suppose instead that $\chi$ is irreducible over $\GC$.  By \cite[Lemma
10.1]{HeusenerPorti2004}, we need to consider two cases, depending on
whether $\chi$ is in the image of $R_K$ or $R_\GG$.  To start, suppose
$\chi$ can be realized by a $\rho \in R_K$; in particular,
$\rho(\Gamma)$ fixes a point $x_0 \in \H^3$.  As $\rho$ is
irreducible, there must be $\gamma_1$ and $\gamma_2$ in $\Gamma$ where
the $\rho(\gamma_i)$ are elliptic elements with rotation axes $L_i$
and $L_1 \cap L_2 = \{x_0\}$.  By Proposition~\ref{prop:real3} and
Lemma~\ref{lem:pathlift}, we can approximate $\rho$ by an
irreducible $\rho'$ in $R$ whose character $\chi'$ is in $X_G$ and
where the $\rho'(\gamma_i)$ are still elliptic with axes $L_i'$ very close
to the $L_i$.  As $\rho'$ is conjugate into $R_G$, there is a totally
geodesic plane $P'$ preserved by $\rho'(\Gamma)$, and the axes $L_i'$
must meet $P'$ in right angles; in particular, the angle between $L_1'$
and $L_2'$, as measured along their perpendicular bisector (which is
contained in $P'$), is $0$.  For $\rho'$ close enough to $\rho$, this
is impossible as $L_1 \cap L_2 = \{x_0\}$.  So we cannot have $\chi$
in $X_K$.

Thus our final case is when $\chi$ is irreducible and in $X_\GG$.  As
$X_\GG$ is locally path connected by Proposition~\ref{prop:real3} and
$\chi$ is a limit point of $X_\G \subset X_\GG$, we can find a path
$\chi_t$ from $\chi_0$ in $X_G$ to $\chi$.  Applying
Lemma~\ref{lem:pathlift} to $R_\GG \to X_\GG$, we lift
$\chi_t$ to a path $\rho_t$ starting at $\rho_0 \in R_G$.  As $G$ is a
connected component of $\GG$ and each $\rho_0(\gamma) \in G$, it
follows by continuity that $\rho_1(\gamma)$ is also in $G$.  Thus
$\rho_1$ is in $R_G$ and so $\chi$ is in $X_G$ as desired, proving the
lemma.
\end{proof}

\section{Basic facts about $\PSLRtilde$}
\label{sec:basicfacts}

For the group $\G = \PSL{2}{\R}$, consider its universal covering Lie group
$\Gtil = \PSLRtilde$, which is also its universal central extension
(see \cite[\S 5]{Ghys2001} or \cite[\S 2.3.3]{Calegari2009}):
\[
0 \to \Z \to \Gtil \xrightarrow{p} \G \to 1
\]
Concretely, we realize $\Gtil$ as follows. We identify
$S^1 = \projsp^1(\R)$ with $\R/\Z$ and view the quotient map as the
universal covering map $\R \to \projsp^1(\R)$.  The projective action
of $\G$ on $\projsp^1(\R)$ is faithful, so we identify $\G$ with its
image subgroup in $\Homeo^+(S^1)$.   Every
homeomorphism of $\projsp^1(\R)$ in $G$ lifts to countably many
homeomorphisms of $\R$. We define $\Gtil$ to be the subgroup of
$\Homeo^+(\R)$ consisting of all lifts of elements of $\G$.
The kernel of $p \maps \Gtil \to \G$, which is also the center of
$\Gtil$, is the deck group of $\R \to \projsp^1(\R)$, namely the group
of integer translations. We let $s$ be the element of the center which
acts by $x \mapsto x + 1$, and write elements of the center
multiplicatively as $s^k$.  An element of $\Gtil$ is called elliptic,
parabolic, or hyperbolic when its image in $\G$ is of that type.  The
disjoint partition of $\G$ into elliptic, parabolic, hyperbolic, and
trivial elements means that $\Gtil$ is similarly partitioned into
elliptic, parabolic, hyperbolic, and central elements.

\subsection{Translation number}
\label{sec:transnum}

An important concept for us is the \emph{translation number} of an
element $\gtil \in \Gtil$, which is defined as
\begin{equation}
\label{eq:transdef}
\trans(\gtil) = \lim_{n \to \infty} \frac{\gtil^n(x) - x}{n}
\mtext{for some $x \in \R$.}
\end{equation}
This is well-defined since the value of the limit is independent
of the choice of $x$.

Here are some key properties of the translation number; see \cite[\S
5]{Ghys2001} or \cite[\S 2.3.3]{Calegari2009} for extensive background
and details.  First, the map $\trans:\Gtil\to\R$ is continuous and is
constant on conjugacy classes in $\Gtil$.  Also, it is a homogenous
quasimorphism for $\Gtil$ in the sense discussed in
Section~\ref{sec:milnor-wood} below.  Considering the center
$Z(\Gtil) = \pair{s}$ as above, we have $\trans(s^k) = k$, and
moreover $\trans(\gtil \cdot s^k) = \trans(\gtil) + k$ for any $\gtil$
in $\Gtil$.

Since they map to elements in $G$ that have a fixed point in
$\projsp^1(\R)$, all parabolic and hyperbolic elements of $\Gtil$ have
integral translation numbers.  In contrast, any real number arises
as the translation number of an elliptic element. Moreover, if $\gtil$
is an elliptic element of $\Gtil$, then $2 \pi \trans(\gtil)$ is equal,
modulo $2\pi$, to the rotation angle of $p(\gtil)$ at its unique fixed
point in $\H^2$.

\subsection{The Euler class}
\label{sec:euler}

Given a group $\Gamma$ and a representation $\rho:\Gamma\to \G$, the
Euler class $\euler{\rho}\in H^2(\Gamma; \Z)$ is a complete
obstruction to lifting $\rho$ to a representation
$\rhotil:\Gamma\to\Gtil$ such that $p\circ\rhotil = \rho$.  Here is a
review of its definition; see e.g.~\cite[\S 6.2]{Ghys2001} for
details. Choose an arbitrary section $\sigma:\Gamma\to\Gtil$, that is,
a function satisfying $p\circ\rhotil = \rho$.  Define a
function $\phi_\sigma \maps \Gamma\times\Gamma\to \Z$ by
$$
s^{\phi_\sigma(\gamma_1, \gamma_2)} =
\sigma(\gamma_1)\sigma(\gamma_2)\sigma(\gamma_1\gamma_2)^{-1}
\mtext{where $Z(\Gtil) = \pair{s}$.}
$$
Associativity of group multiplication implies that
$\phi_\sigma$ satisfies the $2$-cocycle relation
$$
\phi_\sigma(\gamma_2,\gamma_3) - \phi_\sigma(\gamma_1\gamma_2, \gamma_3)
+ \phi_\sigma(\gamma_1, \gamma_2\gamma_3) - \phi_\sigma(\gamma_1, \gamma_2) = 0 .
$$
We define $\euler{\rho}$ to be the class in $H^2(\Gamma; \Z)$
represented by $\phi_\sigma$.  To see that this is well-defined, note
that if $\sigma'$ is another section, then
$\phi_\sigma - \phi_{\sigma'}$ is the coboundary of the 1-cochain
$\tau \maps \Gamma \to \Z$ determined by
$s^{\tau(\gamma)} = \sigma(\gamma) \sigma'(\gamma)^{-1}$.  Now a
section $\sigma$ is actually a lift of the representation $\rho$ when
the \2-cocycle $\phi_\sigma$ is identically zero; if $\phi_\sigma$ is
merely a coboundary, say $\phi_\sigma = \delta(\tau)$, then the
section $\sigma'$ determined by
$\sigma'(\gamma) = \sigma(\gamma) s^{-\tau(\gamma)}$ has
$\phi_{\sigma'} = 0$ on the nose. Thus a lift of $\rho$ exists
precisely when the cohomology class $\euler{\rho}$ vanishes.

Now suppose that $\rho_t$ is a continuous path of representations
$\Gamma \to \G$.  We
may choose a continuous family $\sigma_t$ of sections, for example by
choosing generators for $\Gamma$ and defining $\sigma_t(\gamma)$ in terms
of a fixed representation of $\gamma$ as a word in the generators.
This gives a continuous $1$-parameter family of cocycles.  In the general
setting, since the coboundaries are a closed subspace of the cocycles,
this implies that the map $\rho \mapsto \euler{\rho}$ is continuous.
In our setting, this means that for any $3$-manifold $M$ the Euler
class is constant on connected components of $\RG{M}$.

\subsection{Parameterizing lifts} \label{sec:paramlifts}

When $\rho \maps \Gamma \to \G$ lifts to $\Gtil$, there are many
lifts. Specifically, when $\rho$ lifts, the set of all lifts is
a 1-dimensional affine space over $H^1(\Gamma; \Z)$.  Concretely,
given some lift $\rhotil \maps \Gamma \to \Gtil$ and a
$\phi \in H^1(\Gamma; \Z)$, then, taking
$Z(\Gtil) = \langle s\rangle$, we can construct another lift
$\phi \cdot \rhotil$ via
$\gamma\mapsto \rhotil(\gamma) s^{\phi(\gamma)}$, where we are viewing
$\phi\in H^1(\Gamma; \Z)$ as a homomorphism $\Gamma \to \Z$.
Conversely, if $\rhotil_1$ and $\rhotil_2$ are two lifts of $\rho$, then
we claim that they differ by some $\phi \in H^1(\Gamma; \Z)$.  Since
$p\circ\rhotil_1(\gamma) = p\circ\rhotil_2(\gamma)$ for all $\gamma\in\Gamma$,
we have $\rhotil_1(\gamma) = \rhotil_2(\gamma)s^{\phi(\gamma)}$ for
some well-defined function $\phi \maps \Gamma\to\Z$.  To see that $\phi$ is
a homomorphism, note that
$$
\rhotil_1(\gamma_1\gamma_2)s^{\phi(\gamma_1\gamma_2)} =
\rhotil_2(\gamma_1\gamma_2) =
\rhotil_1(\gamma_1)s^{\phi(\gamma_1)}\rhotil_1(\gamma_2)s^{\phi(\gamma_2)} =
\rhotil_1(\gamma_1)\rhotil_1(\gamma_2)s^{\phi(\gamma_1) + \phi(\gamma_2)}
$$
which implies that $\phi(\gamma_1\gamma_2) = \phi(\gamma_1) + \phi(\gamma_2)$.
\subsection{Representations of $\Z^2$}
\label{sec:repsZZ}

For $\Lambda = \Z^2$, consider the set of representations
$\RGtil{\Lambda} = \Hom(\Lambda, \Gtil)$.  A representation
$\rhotil \in \RGtil{\Lambda}$ is called elliptic, parabolic, or
hyperbolic if the image group $\rhotil(\Lambda)$ contains an element
of the corresponding type.  Since $\Lambda$ is abelian, every
non-trivial element of $\rhotil(\Lambda)$ must be of the same type.
Thus these categories are disjoint; the remaining representations
which are not in any of these categories are called central since
$\rhotil(\Lambda)$ lies there.  For a fixed
$\rhotil \in \RGtil{\Lambda}$, we get a map
$(\trans \circ \rhotil) \maps \Lambda \to \R$.  The map
$\trans \circ \rhotil$ is actually a homomorphism; this is because a
homogenous quasimorphism is actually a homomorphism on any abelian
subgroup (see \cite[Prop.~2.65]{Calegari2009} or
\cite[Theorem~6.16]{Ghys2001}).

Identifying $\Hom(\Lambda, \R)$ with
$H^1(\Lambda; \R)$, we get a map
\[
\trans \maps \RGtil{\Lambda} \to H^1(\Lambda; \R)
   \mtext{defined by $\rhotil \mapsto \trans \circ \rhotil$.}
\]
This map is far from injective: any parabolic or hyperbolic element of
$\Gtil$ has an integral translation number, and it follows easily that
the preimage of any class in $H^1(\Lambda; \Z)$ contains many
nonconjugate parabolic and hyperbolic representations. However, for
elliptic and central representations, the homomorphism
$\trans(\rhotil)$ is a complete conjugacy invariant.  In particular,
it is easy to see that:
\begin{lemma}\label{lem:trans_on_elliptic}
  Suppose $\rhotil \in \RGtil{\Lambda}$ is elliptic or
  central.  If $\trans(\rhotil)(\nu) = 0$ for some $\nu \in
  \Lambda$, then $\rhotil(\nu) = 1$.  
\end{lemma}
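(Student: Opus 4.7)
My plan is to show that in both cases the image $\rhotil(\Lambda)$ lies inside a one‑parameter subgroup of $\Gtil$ on which $\trans$ restricts to an injective homomorphism $\R \to \R$; the conclusion then follows immediately.

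In the central case, $\rhotil(\Lambda) \subseteq Z(\Gtil) = \langle s\rangle$ by definition, so I would simply invoke the already-recorded identity $\trans(s^k) = k$: this makes $\trans$ an isomorphism $\langle s \rangle \to \Z$, and $\trans(\rhotil(\nu)) = 0$ forces $\rhotil(\nu) = s^0 = 1$.

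For the elliptic case, I would begin by choosing $\mu \in \Lambda$ with $\rhotil(\mu)$ elliptic. Since $\Lambda$ is abelian, every element of $\rhotil(\Lambda)$ lies in the centralizer $C_\Gtil(\rhotil(\mu))$. The image $p(\rhotil(\mu))$ is elliptic in $G$ and fixes a unique point $x_0 \in \H^2$, whose stabilizer in $G$ is a rotation subgroup conjugate to $\PSO_2 \cong S^1$. Thus $C_\Gtil(\rhotil(\mu))$ is the full preimage $p^{-1}(\mathrm{Stab}_G(x_0))$: one containment is automatic, and the reverse holds because this preimage is abelian (being a connected one‑dimensional subgroup of $\Gtil$ covering $S^1$). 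In particular, $C_\Gtil(\rhotil(\mu))$ is isomorphic to $\R$ and contains $\rhotil(\Lambda)$.

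The main step — really the only nonroutine one — is to identify the restriction of $\trans$ to this $\R$. The image of this subgroup in $G$ is a circle of rotations about $x_0$, and by the property recalled in Section~\ref{sec:transnum} that $2\pi\trans(\gtil)$ equals the rotation angle of $p(\gtil)$ modulo $2\pi$, together with the normalization $\trans(s) = 1$, the restriction of $\trans$ to this lift of $\mathrm{Stab}_G(x_0)$ is exactly the universal covering parameter. Hence it is an injective (in fact bijective) homomorphism onto $\R$, so $\trans(\rhotil(\nu)) = 0$ implies $\rhotil(\nu) = 1$, completing the proof. The only subtlety to watch is being clean about this identification of the one‑parameter subgroup covering $\PSO_2^{x_0}$ with $\R$ via $\trans$; everything else is a direct appeal to facts already stated.
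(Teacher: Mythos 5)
Your proof is correct. The paper itself gives no argument for this lemma --- it is introduced with ``it is easy to see that'' --- so there is nothing to match your write-up against; what you have written is a complete justification. The central case is immediate, as you say, and in the elliptic case your chain of claims holds up: the preimage $p^{-1}\left(\mathrm{Stab}_G(x_0)\right)$ really is connected (the inclusion of the rotation circle about $x_0$ into $G$ is a homotopy equivalence, so it unwinds to a single copy of $\R$ in the universal cover), it is abelian and contains $\rhotil(\Lambda)$, and continuity of $\trans$ together with the congruence $\trans(g_t)\equiv t \pmod 1$ forces $\trans$ to be the covering parameter on that subgroup, hence injective. I would only point out that the centralizer machinery can be bypassed entirely: since $\Lambda$ is abelian, the element $\rhotil(\nu)$ is itself either central or elliptic. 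If it is central it equals $s^k$ with $k=\trans(s^k)=0$, so it is trivial; if it is elliptic, then by the rotation-angle property of Section~\ref{sec:transnum} the value $\trans\left(\rhotil(\nu)\right)$ is congruent mod $1$ to $\theta/2\pi$ where $\theta$ is the (nonzero mod $2\pi$) rotation angle of $p\left(\rhotil(\nu)\right)$, contradicting $\trans\left(\rhotil(\nu)\right)=0$. This two-line version is presumably the ``easy to see'' argument the authors intend, but your longer route through the one-parameter subgroup is sound and has the side benefit of exhibiting $\trans$ as a complete invariant on the whole image $\rhotil(\Lambda)$.
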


\section{Translation extension loci} 
\label{sec:TEL}

We will now define the translation extension locus, which is the
central object in this paper.  Let $M$ be an irreducible $\Q$-homology
solid torus, and let $\inc \maps \partial M \to M$ be the inclusion
map.  Inside $\RGtil{M} = \Hom\left(\pi_1(M), \Gtil\right)$, let
$\RGtilPE{M}$ be the subset of representations whose restriction to
$\pi_1(\partial M)$ is either elliptic, parabolic, or central in the
sense of Section~\ref{sec:repsZZ}.  Consider the composition
\[
\RGtil{M} \xrightarrow{\inc^*} \RGtil{\partial M}
\xrightarrow{\trans} H^1(\partial M; \R).
\]
The closure in $H^1(\partial M; \R)$ (with respect to the vector space
topology) of the image of $\RGtilPE{M}$ under $\trans \circ \inc^*$ is
called the \emph{translation extension locus} and denoted $\TEL{M}$.
We distinguish two special kinds of points of $\TEL{M}$.  First, those
which are not in the image of $\RGtilPE{M}$, but only its closure, are
called \emph{ideal points}. Second, those coming from elements of
$\RGtilPE{M}$ which restrict to parabolic representations in
$\RGtil{\partial M}$ are called \emph{parabolic}; such points
necessarily lie on the integer lattice $H^1(\partial M; \Z)$.  The
translation extension locus was first considered by Khoi
\cite{Khoi2003} in his work on computing Seifert volumes of hyperbolic
\3-manifolds.

Let $T = \inc^*\left(H^1(M; \Z)\right) \subset H^1(\partial M; \R)$.
Consider the group of affine isomorphisms of $H^1(\partial M; \R)$
generated by the map $x \mapsto -x$ together with all translations by
elements of $T$.  As $T$ is isomorphic to $\Z$, this affine group is
isomorphic to an infinite dihedral group whose action on
$H^1(\partial M; \R)$ preserves the line containing $T$; we will
denote this dihedral group by $\SymTEL{M}$.

\subsection{Coordinates and lines}\label{subsection: coordinates}
It will be helpful to have concrete coordinates for the translation
extension locus. To this end, fix a homologically natural framing
$(\mu, \lambda)$ for $H_1(\partial M; \Z)$ as discussed in
Section~\ref{sec:framings}.  We now identify $H^1(\partial M; \R)$
with $\R^2$ by using the basis $(\mu^*, \lambda^*)$ that is
algebraically dual to the basis $(\mu, \lambda)$ of
$H_1(\partial M; \R)$, that is, $\mu^*(\mu) = \lambda^*(\lambda) = 1$
and $\mu^*(\lambda) = \lambda^*(\mu) = 0$.  Note that while $\lambda$
is unique up to sign and $\mu$ depends on our choice of framing, it is
$\mu^*$ that is unique (up to sign) and $\lambda^*$ that depends on
the framing; geometrically, the point is that $\mu^*$ is the
Poincar\'e dual of $\pm \lambda$.  Let $k \in \N$ be the order of
$\inc_*(\lambda)$ in $H_1(M; \Z)$; by Poincar\'e duality, the number
$k$ is also the index of $\pair{\inc_*(\mu)}$ in
$\Honefree = \Honefreedef \cong \Z$.  Hence, in our coordinates, the
subgroup $T = \inc^*\left(H^1(M; \Z)\right)$ is the points $(kn, 0)$
for $n \in \Z$.  Moreover, the group $\SymTEL{M}$ consists of
horizontal translations by shifts in $k\Z$ and $\pi$\hyp rotations
about every point of the form $(kn/2, 0)$ for $n \in \Z$.

To state our tool for constructing orders, we need the following
concept.  Given a slope $r$ on $\partial M$, which we can specify by a
primitive element $\gamma \in H_1(\partial M; \Z)$, define the line
$L_r = L_\gamma$ to be the subspace of $H^1(\partial M; \R)$
consisting of linear functionals that vanish on the $1$-dimensional
subspace of $H_1(\partial M; \R)$ determined by $\gamma$.  Thus the
line $L_\infty = L_\mu$ is the span of $\lambda^*$, which is the
vertical axis in our coordinates, and the line $L_0 = L_\lambda$ is
the span of $\mu^*$, which is the horizontal axis.  In general,
$L_{r}$ is a line through the origin in $\R^2$ of slope $-r$.

\subsection{Key results}

Here is the basic structural result about $\TEL{M}$, which is
roughly that it is a family of immersed arcs invariant under
$\SymTEL{M}$, such that the quotient is a finite
graph.

\begin{theorem}\label{thm:structure}
  The extension locus $\TEL{M}$ is a locally finite union of analytic
  arcs and isolated points.  It is invariant under
  $\SymTEL{M}$ with quotient homeomorphic to a finite graph.
  The quotient contains finitely many points which are ideal or
  parabolic in the sense defined above. The locus $\TEL{M}$ contains
  the horizontal axis $L_\lambda$, which comes from representations to
  $\Gtil$ with abelian image.
\end{theorem}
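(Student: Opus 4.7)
The plan is to relate $\TEL{M}$ to the restriction map $\inc^* \maps \XG{M} \to \XG{\partial M}$ on character varieties and use Lemma~\ref{lem:1d-image} to control dimensions. The key observation is that for a representation $\rhotil \in \RGtilPE{M}$ lifting $\rho \in \RG{M}$, the translation numbers $\trans(\rhotil(\mu))$ and $\trans(\rhotil(\lambda))$ are determined modulo $\Z$ by the conjugacy class of $\rho|_{\partial M}$ in $\G$; this conjugacy data is recorded in $\XG{\partial M}$ up to the $\{\pm 1\}$ ambiguity from conjugation by orientation-reversing $\PGL{2}{\R}$-elements. The integer part depends on the choice of lift and varies by $T = \inc^*(H^1(M;\Z))$ according to the parametrization of lifts in Section~\ref{sec:paramlifts}. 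Thus $\trans \circ \inc^*$ factors through $\inc^*(\XG{M})$ via an exponential-type covering map $H^1(\partial M;\R) \to \augcharvar{\partial M}$ restricted to the elliptic, parabolic, and central loci.

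From Lemma~\ref{lem:1d-image}, $\inc^*(\SLcharvar{M})$ has complex dimension at most $1$, so $\inc^*(\XG{M})$ is a real semialgebraic set of dimension at most $1$ in $\XG{\partial M}$. Proposition~\ref{prop:real2} then implies that smooth real points have classical neighborhoods which are smooth analytic arcs, and Propositions~\ref{prop:real1} and \ref{prop:real3} give the finite-graph structure on compact pieces and local path-connectedness. Pulling back through the exponential covering yields $\TEL{M}$ as a locally finite union of analytic arcs and isolated points.

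For $\SymTEL{M}$-invariance, the $T$-translations are induced by changing lifts by elements of $H^1(M;\Z)$, which shifts boundary translation numbers by $\inc^*(\phi)$. The antipodal symmetry $v \mapsto -v$ comes from conjugating $\rhotil$ by an orientation-reversing homeomorphism $\tilde\tau$ of $\R$ that covers an orientation-reversing isometry $\tau \in \PGL{2}{\R}$ of $\H^2$; since $\tilde\tau s = s^{-1} \tilde\tau$ as maps of $\R$, this conjugation sends $\Gtil$ to $\Gtil$ and negates translation numbers, while $\tau\rho\tau^{-1}$ has the same character as $\rho$ in $\XG{M}$. The quotient $\TEL{M}/\SymTEL{M}$ is finite-to-one over the compact portion of $\inc^*(\XG{M})$ corresponding to elliptic, central, and parabolic boundary characters, hence is a finite graph; the finitely many parabolic points arise where $\tr_\mu^2 = 4$ or $\tr_\lambda^2 = 4$ cuts the one-dimensional real curve, and the finitely many ideal points come from limit ends of the irreducible components of $\XG{M}$.

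Finally, $L_\lambda \subset \TEL{M}$ follows from abelian representations. Since $\Gtil$ is torsion-free, any abelian $\rhotil \maps \pi_1(M) \to \Gtil$ factors through $\Honefree \cong \Z$; thus $\rhotil(\lambda) = 1$ because $\inc_*(\lambda)$ is torsion, giving $\trans(\rhotil(\lambda)) = 0$, and sending a generator of the free part to elliptic elements of arbitrary translation number realizes every $(x,0)$. The main obstacle will be making the correspondence between the transcendental translation-number picture on $H^1(\partial M;\R)$ and the algebraic character-variety image $\inc^*(\XG{M})$ fully precise, in particular handling the closure in the definition of $\TEL{M}$ so as to correctly identify ideal points with limits at infinity of $\XG{M}$ and to ensure that the pulled-back semialgebraic and analytic structure descends to a finite graph under the $\SymTEL{M}$-action.
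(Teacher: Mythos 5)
Your overall strategy is the same as the paper's: push $\TEL{M}$ down to $\inc^*\left(\XG{M}\right) \subset X(\partial M)$ via the (pillowcase) covering map induced by exponentiating translation numbers, use Lemma~\ref{lem:1d-image} together with Propositions~\ref{prop:real1}--\ref{prop:real2} to see that the downstairs image is a compact finite graph, and obtain the symmetries exactly as in Lemma~\ref{lem:telsym} (translating lifts by $H^1(M;\Z)$ and conjugating by an orientation-reversing element of $\PGL{2}{\R}$). The local structure and the horizontal axis $L_\lambda$ are handled correctly.

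However, there is a genuine gap at the step where you assert that ``$\TEL{M}/\SymTEL{M}$ is finite-to-one over the compact portion of $\inc^*\left(\XG{M}\right)$ \ldots\ hence is a finite graph.'' The covering map $H^1(\partial M;\R) \to X(\partial M)$ is \emph{infinitely}-to-one, and $\SymTEL{M}$ only quotients out horizontal translations (by $k\Z$) and $\pi$-rotations about points on the horizontal axis. Consequently, the fiber over a single boundary character still contains infinitely many points in the quotient, differing by integer shifts in the $\lambda^*$-coordinate. Nothing in your argument prevents $\TEL{M}$ from containing points with arbitrarily large $\abs{\lambda^*}$-coordinate, in which case the quotient would be noncompact and could fail to be a finite graph. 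The paper closes this gap with a Milnor--Wood-type bound (Proposition~\ref{prop:milnor-wood} and Lemma~\ref{lem:compact}): since $\lambda^k$ bounds a Seifert-type surface $S$ in $M$ and $\trans$ is a homogeneous quasimorphism of defect at most $1$, one gets $\abstrans{\rhotil(\lambda)} \leq \max(-\chi(S),0)/k$ uniformly over all $\rhotil$, which confines $\TEL{M}$ to a horizontal strip of bounded height. This bound is an essential input that your proposal never supplies. A secondary, smaller omission: to conclude that the compact quotient is actually a finite graph (rather than, say, containing a Cantor set inside the locally finite preimage graph), you also need that $\TELquo{M}$ has finitely many connected components; the paper gets this from the semialgebraicity of the lift-admitting locus $\RGlift{M}$ and the continuity of the Euler class (Lemma~\ref{lem:finitecomp}), whereas you only gesture at descending the semialgebraic structure without establishing this finiteness.
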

Moreover, here are our key tools for constructing orders.

\begin{lemma}\label{lemma:key}
  Suppose $M$ is a compact orientable irreducible \3-manifold with
  $\partial M$ a torus, and assume the Dehn filling $M(r)$ is
  irreducible.  If $L_r$ meets $\TEL{M}$ at a nonzero
  point which is not parabolic or ideal, then $M(r)$ is orderable.
\end{lemma}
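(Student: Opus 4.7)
The plan is to convert the hypothesized point of $L_r \cap \TEL{M}$ into a nontrivial homomorphism from $\pi_1(M(r))$ to the left-orderable Lie group $\Gtil$, and then invoke a standard orderability criterion for irreducible 3-manifold groups.

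\emph{Extract a representation.} Since the chosen point is non-ideal, it actually lies in the image of $\RGtilPE{M}$ under $\trans \circ \inc^*$ rather than merely in the closure. Pick a preimage $\rhotil \maps \pi_1(M) \to \Gtil$. By the definition of $\RGtilPE{M}$ the restriction $\rhotil|_{\pi_1(\partial M)}$ is elliptic, parabolic, or central, and the parabolic case is excluded by hypothesis, so this restriction is elliptic or central. Let $\gamma_r \in \pi_1(\partial M)$ be a primitive element of slope $r$; then membership in the line $L_r$ is equivalent to $\trans(\rhotil(\gamma_r)) = 0$. Applying Lemma~\ref{lem:trans_on_elliptic} to $\rhotil|_{\pi_1(\partial M)}$ forces $\rhotil(\gamma_r) = 1$.

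\emph{Descend to the filled manifold.} By van Kampen, $\pi_1(M(r))$ is the quotient of $\pi_1(M)$ by the normal closure of $\gamma_r$, so the vanishing of $\rhotil(\gamma_r)$ lets $\rhotil$ descend to a representation $\bar\rhotil \maps \pi_1(M(r)) \to \Gtil$. The hypothesis that the chosen point is nonzero in $H^1(\partial M; \R)$ says that the linear functional $\trans \circ \rhotil|_{\pi_1(\partial M)}$ is nonzero; in particular $\rhotil|_{\pi_1(\partial M)}$ is nontrivial, so $\bar\rhotil$ has nontrivial image.

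\emph{Conclude via a general criterion.} Since $\Gtil$ embeds in $\Homeo^+(\R)$ it is left-orderable, and so the image of $\bar\rhotil$ is a nontrivial left-orderable group which $\pi_1(M(r))$ surjects onto. Because $M(r)$ is compact, connected, orientable, and irreducible, it is $P^2$-irreducible, so the theorem of Boyer--Rolfsen--Wiest on orderable 3-manifold groups applies and yields that $\pi_1(M(r))$ is left-orderable, proving the lemma. The one genuinely nontrivial ingredient is the Boyer--Rolfsen--Wiest criterion that a $P^2$-irreducible 3-manifold group surjecting onto a nontrivial left-orderable group is itself left-orderable; the remainder is essentially an unwinding of the definitions of $\TEL{M}$ and $L_r$ together with Lemma~\ref{lem:trans_on_elliptic}.
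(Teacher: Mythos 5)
Your proposal is correct and follows essentially the same route as the paper: pick a non-ideal, non-parabolic preimage $\rhotil \in \RGtilPE{M}$, use membership in $L_r$ together with Lemma~\ref{lem:trans_on_elliptic} to kill the filling slope, descend to a nontrivial representation of $\pi_1(M(r))$ into $\Gtil \leq \Homeo^+(\R)$, and conclude with the Boyer--Rolfsen--Wiest criterion. No gaps.
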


\begin{lemma}\label{lemma:branched}
  Suppose $K$ is a knot in a $\Z$-homology \3-sphere $Y$ whose
  exterior $M$ is irreducible.  Let $(\mu, \lambda)$ be a
  homologically natural framing with $M(\mu) = Y$.  Assume also that
  the $n$-fold cyclic cover $\Ytil$ of $Y$ branched over $K$ is
  irreducible.  If the vertical line $\mu^* = 1/n$ meets $\TEL{M}$ at
  a point which is not ideal, then $\Ytil$ is orderable.
\end{lemma}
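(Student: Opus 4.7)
The plan is to lift the given representation to a non-trivial homomorphism $\pi_1(\Ytil) \to \Gtil$ and then invoke the theorem of Boyer--Rolfsen--Wiest that an irreducible $3$-manifold admitting a non-trivial homomorphism to a left-orderable group has left-orderable fundamental group. Since the meeting point is non-ideal, there is an actual representation $\rhotil \colon \pi_1(M) \to \Gtil$ with $\rhotil|_{\pi_1(\partial M)}$ elliptic, parabolic, or central and $\trans(\rhotil(\mu)) = 1/n$. For $n \geq 2$ the value $1/n$ is not an integer, which by Section~\ref{sec:transnum} rules out the parabolic, hyperbolic, and central cases, so $\rhotil|_{\pi_1(\partial M)}$ must be elliptic. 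Its image then lies in the elliptic one-parameter subgroup of $\Gtil$ containing $\rhotil(\mu)$, a copy of $\R$ that also contains the center $\langle s \rangle$; consequently $\rhotil(\mu^n)$ is the unique element of this subgroup with translation number $n \cdot (1/n) = 1$, namely $s$ itself.

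Next I would modify the restriction $\psi = \rhotil|_{\pi_1(\Mtil)}$ so that it kills $\mu^n$ and hence descends to $\pi_1(\Ytil) = \pi_1(\Mtil)/\langle\langle \mu^n \rangle\rangle$. Following Section~\ref{sec:paramlifts}, any $\phi \in H^1(\pi_1(\Mtil); \Z)$ produces a new lift $\psi'(\gamma) = \psi(\gamma)\, s^{\phi(\gamma)}$, and the condition $\psi'(\mu^n) = 1$ amounts to $\phi([\mu^n]) = -1$. The key claim is that such $\phi$ exists because $[\mu^n]$ is primitive in $H_1(\Mtil; \Z)/\text{torsion}$. To establish this I use two ingredients: first, the map $H_1(\Mtil; \Z) \to H_1(M; \Z) = \Z$ has image exactly $n\Z$, because $\pi_1(\Mtil)$ is the index-$n$ kernel of $\pi_1(M) \to \Z/n$ with $\mu \mapsto 1$, and $[\mu^n]$ maps to $n$; second, when $\Ytil$ is a $\Q$-homology sphere, a Mayer--Vietoris computation for $\Ytil = \Mtil \cup (D^2 \times S^1)$ along the boundary torus forces $\dim_\Q H_1(\Mtil; \Q) = 1$. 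Together these pin $[\mu^n]$ as a generator of $H_1(\Mtil; \Z)/\text{torsion} \cong \Z$, so the desired $\phi$ exists.

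With $\phi$ chosen, $\psi'$ descends to $\tilde\rho \colon \pi_1(\Ytil) \to \Gtil$. For non-triviality I examine $\trans(\tilde\rho(\lambda))$: since $\lambda$ is null-homologous in $M$, the rank-one fact also forces $[\lambda]$ to be torsion in $H_1(\Mtil)$, so $\phi([\lambda]) = 0$ and $\trans(\tilde\rho(\lambda))$ equals the second coordinate $\lambda^*$ of the chosen point of $\TEL{M}$. Provided this point lies off the horizontal axis---the branched-cover analogue of the ``nonzero point'' hypothesis in Lemma~\ref{lemma:key}, as the horizontal axis always contributes the trivial abelian intersection at $(1/n, 0)$---this translation number is nonzero, so $\tilde\rho(\lambda) \neq 1$ and $\tilde\rho$ is non-trivial. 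Combined with the irreducibility of $\Ytil$, Boyer--Rolfsen--Wiest then gives the conclusion. The case in which $\Ytil$ fails to be a $\Q$-homology sphere is easier, since then $H_1(\Ytil; \Q) \neq 0$ supplies a direct non-trivial homomorphism $\pi_1(\Ytil) \to \Z$.

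The main technical obstacle is the primitivity of $[\mu^n]$ in $H_1(\Mtil;\Z)/\text{torsion}$, which combines the deck-group structure of the cyclic cover (controlling the image of $H_1(\Mtil) \to H_1(M)$) with the $\Q$-homology-sphere property of $\Ytil$ (forcing the free rank of $H_1(\Mtil)$ to be exactly one); both facts must be in play simultaneously for a suitable cohomology class to exist.
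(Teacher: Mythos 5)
Your proposal is correct and is essentially the paper's argument unpacked: restricting $\rhotil$ to the cyclic cover $\Mtil$ and twisting the lift by a class $\phi\in H^1(\Mtil;\Z)$ with $\phi([\mu^n])=-1$ (which exists exactly because $[\mu^n]$ generates $H_1(\Mtil;\Z)_{\mathrm{free}}\cong\Z$) is precisely the paper's act of translating the stretched locus $\pi^*\left(\TEL{M}\right)\subset\TEL{\Mtil}$ by $\SymTEL{\Mtil}$ so that it meets the vertical axis, after which both arguments conclude via Boyer--Rolfsen--Wiest (the paper by routing through Lemma~\ref{lemma:key} applied to $\Mtil(\mutil)=\Ytil$, you by descending the twisted representation directly). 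The caveat you flag---that the chosen point must lie off the horizontal axis so that $\trans\bigl(\tilde\rho(\lambda)\bigr)\neq 0$---is shared by the paper's proof, which likewise needs the translated point $(0,y)$ to be nonzero before Lemma~\ref{lemma:key} applies and which is satisfied in every use the paper makes of this lemma; your explicit handling of the case $b_1(\Ytil)>0$ is also a point where you are slightly more careful than the printed argument.
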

The proofs of Theorem~\ref{thm:structure}, Lemma~\ref{lemma:key}, and
Lemma~\ref{lemma:branched} occupy Section~\ref{sec:structureproofs}.
Before tackling them, we show pictures of various $\TEL{M}$ to get a
feel for these objects.

\section{A menagerie of translation extension loci}
\label{sec:menagerie}

\begin{figure}
  \begin{center}
 \pgfkeys{/matplotlibfigure, default, width=11cm}%
 \begin{tikzoverlayabs}[width=\matplotlibfigurewidth]{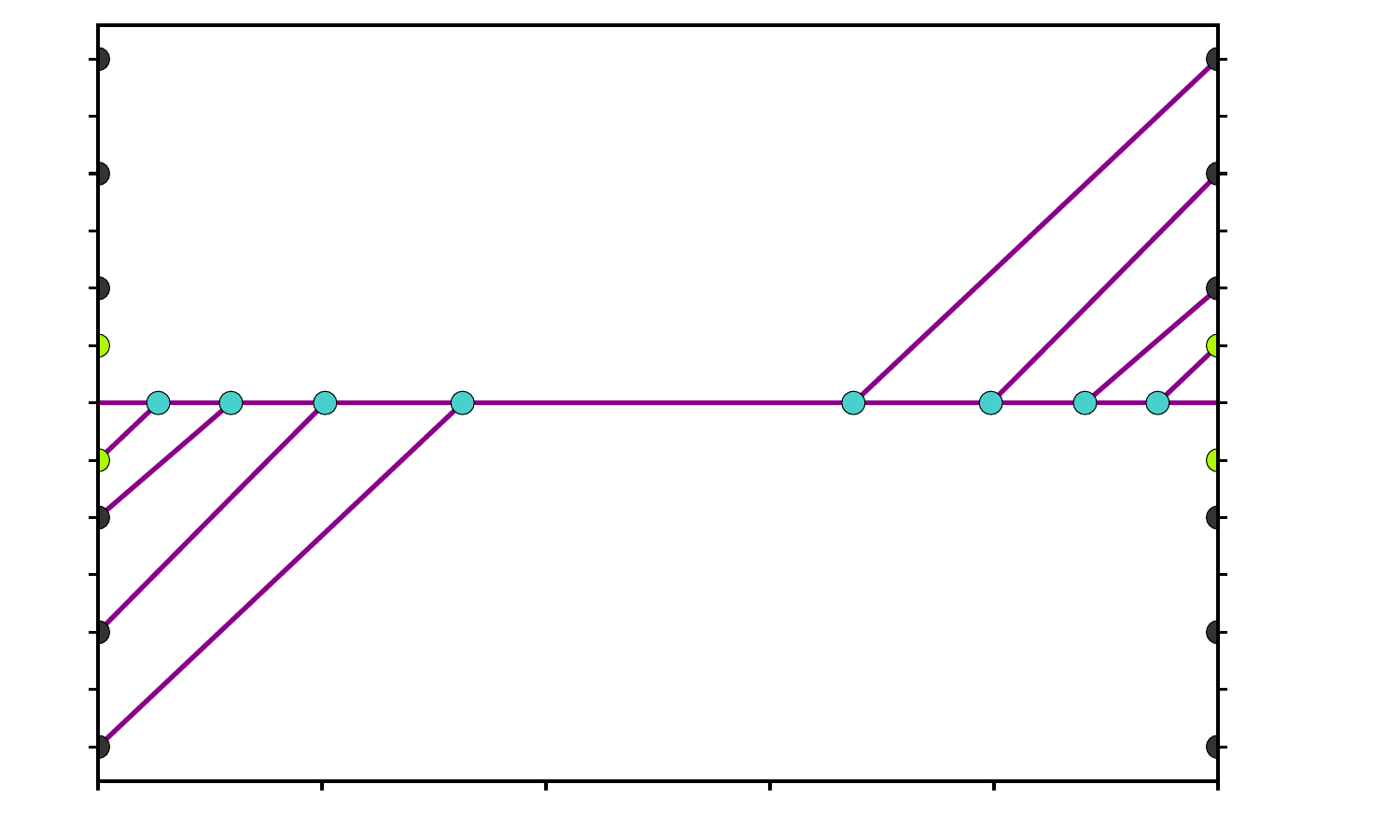}[\matplotlibfigurefont]
  \draw (0.086000, 0.952000) node[below right] {\small $m016$};
  \draw (0.070000, 0.049167) node[below] {$0.0$};
  \draw (0.230000, 0.049167) node[below] {$0.2$};
  \draw (0.390000, 0.049167) node[below] {$0.4$};
  \draw (0.550000, 0.049167) node[below] {$0.6$};
  \draw (0.710000, 0.049167) node[below] {$0.8$};
  \draw (0.870000, 0.049167) node[below] {$1.0$};
  \draw (0.057500, 0.110940) node[left] {$-6$};
  \draw (0.057500, 0.179173) node[left] {$-5$};
  \draw (0.057500, 0.247406) node[left] {$-4$};
  \draw (0.057500, 0.315639) node[left] {$-3$};
  \draw (0.057500, 0.383872) node[left] {$-2$};
  \draw (0.057500, 0.452105) node[left] {$-1$};
  \draw (0.057500, 0.520338) node[left] {$0$};
  \draw (0.057500, 0.588571) node[left] {$1$};
  \draw (0.057500, 0.656804) node[left] {$2$};
  \draw (0.057500, 0.725037) node[left] {$3$};
  \draw (0.057500, 0.793270) node[left] {$4$};
  \draw (0.057500, 0.861503) node[left] {$5$};
  \draw (0.057500, 0.929736) node[left] {$6$};
  \node[below] at (0.47, 0.0) {\small $\mu^*$};
  \node[left] at (0.017500, 0.520338)  {\small $\lambda^*$};
\end{tikzoverlayabs}

  \end{center}
  
  \vspace{-0.4cm}

  \caption{The translation extension locus for $m016$.  See
    Section~\ref{sec:menagerie} for how to read this and subsequent
    plots.}
  \label{fig:m016}
\end{figure}

\begin{figure}
  \begin{center}
 \pgfkeys{/matplotlibfigure, default, width=13cm}%
 \begin{tikzoverlayabs}[width=\matplotlibfigurewidth]{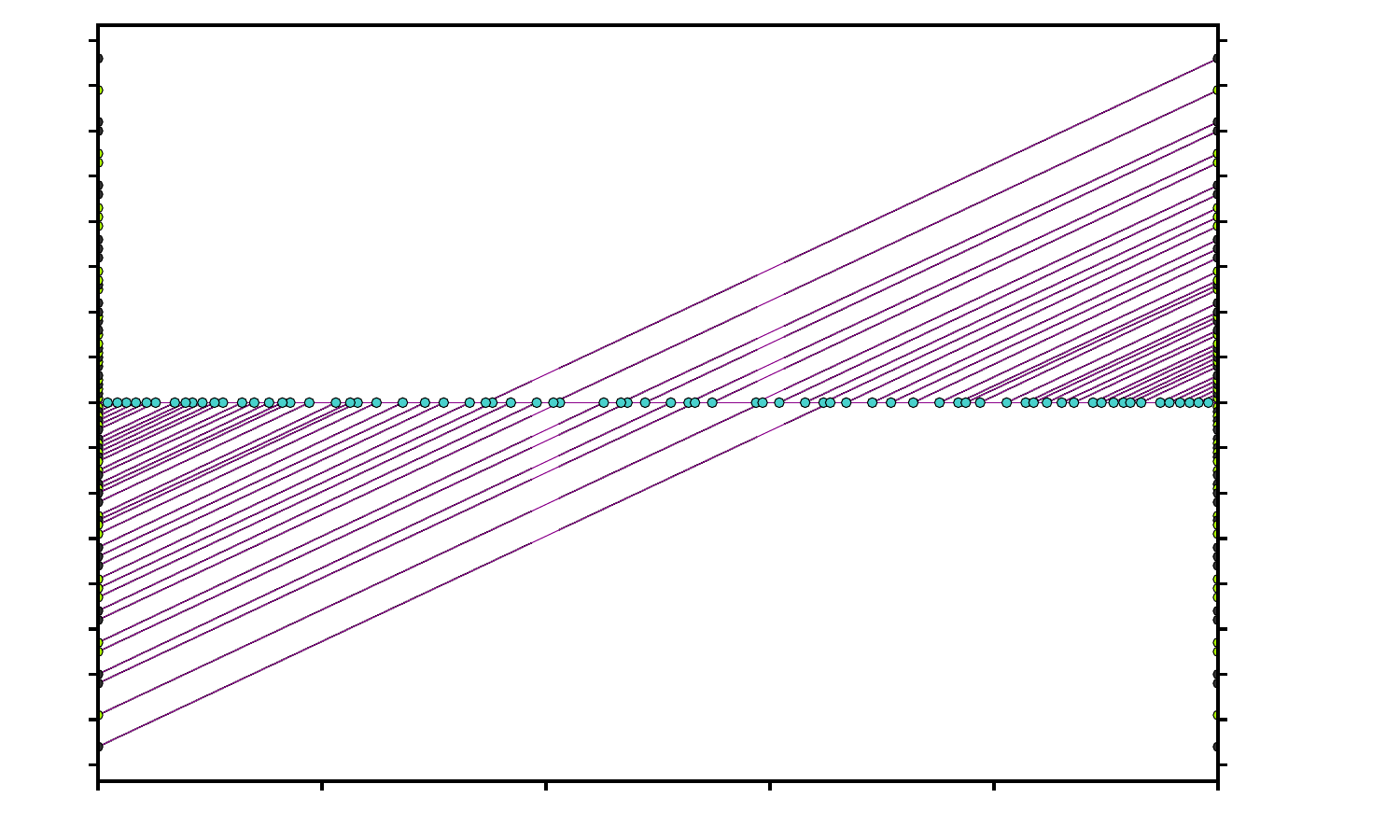}[\matplotlibfigurefont]
  \draw (0.086000, 0.952000) node[below right] {\small $v0220$};
  \draw (0.070000, 0.049167) node[below] {$0.0$};
  \draw (0.230000, 0.049167) node[below] {$0.2$};
  \draw (0.390000, 0.049167) node[below] {$0.4$};
  \draw (0.550000, 0.049167) node[below] {$0.6$};
  \draw (0.710000, 0.049167) node[below] {$0.8$};
  \draw (0.870000, 0.049167) node[below] {$1.0$};
  \draw (0.057500, 0.466766) node[left] {$-10$};
  \draw (0.057500, 0.412857) node[left] {$-20$};
  \draw (0.057500, 0.358949) node[left] {$-30$};
  \draw (0.057500, 0.305040) node[left] {$-40$};
  \draw (0.057500, 0.251132) node[left] {$-50$};
  \draw (0.057500, 0.197224) node[left] {$-60$};
  \draw (0.057500, 0.143315) node[left] {$-70$};
  \draw (0.057500, 0.089407) node[left] {$-80$};
  \draw (0.057500, 0.520674) node[left] {$0$};
  \draw (0.057500, 0.574582) node[left] {$10$};
  \draw (0.057500, 0.628491) node[left] {$20$};
  \draw (0.057500, 0.682399) node[left] {$30$};
  \draw (0.057500, 0.736307) node[left] {$40$};
  \draw (0.057500, 0.790216) node[left] {$50$};
  \draw (0.057500, 0.844124) node[left] {$60$};
  \draw (0.057500, 0.898033) node[left] {$70$};
  \draw (0.057500, 0.951941) node[left] {$80$};
  \node[right] at (0.89, 0.07) {\small $\mu^*$};
  \node[left] at (0.017500, 0.96)  {\small $\lambda^*$};
\end{tikzoverlayabs}

  \end{center}

  \caption{ This extension locus follows the same basic pattern as
    Figure~\ref{fig:m016}, but there are some 72 diagonal arcs each
    joining a parabolic point to an Alexander point.  The manifold
    here is $M = v0220$, which is the exterior of the knot
    $k7_6 = T(7,-17,2,1)$ in $S^3$ found in
    \cite{ChampanerkarKofmanPatterson2004}.  The framing is such that
    $M(\mu) = S^3$ and $M(-117)$ is the lens space $L(117, 43)$; in
    SnapPy's default framing $\mu = (1,0)$ and $\lambda = (-116, 1)$.
    The manifold $M$ fibers over the circle with fiber of genus 47.
    Here, we can use Lemma~\ref{lemma:key} to order $M(r)$ for all
    $r \in (-75, \infty)$; in contrast, the interval of non-$L$-space
    slopes is $(-93, \infty)$.  It is remarkable how complicated
    $\TEL{M}$ is given that $M$ has an ideal triangulation with only
    seven tetrahedra!}
  \label{fig:v0220}
\end{figure}

\begin{figure}
  \begin{center}
 \pgfkeys{/matplotlibfigure, default, width=11cm}%
 \begin{tikzoverlayabs}[width=\matplotlibfigurewidth]{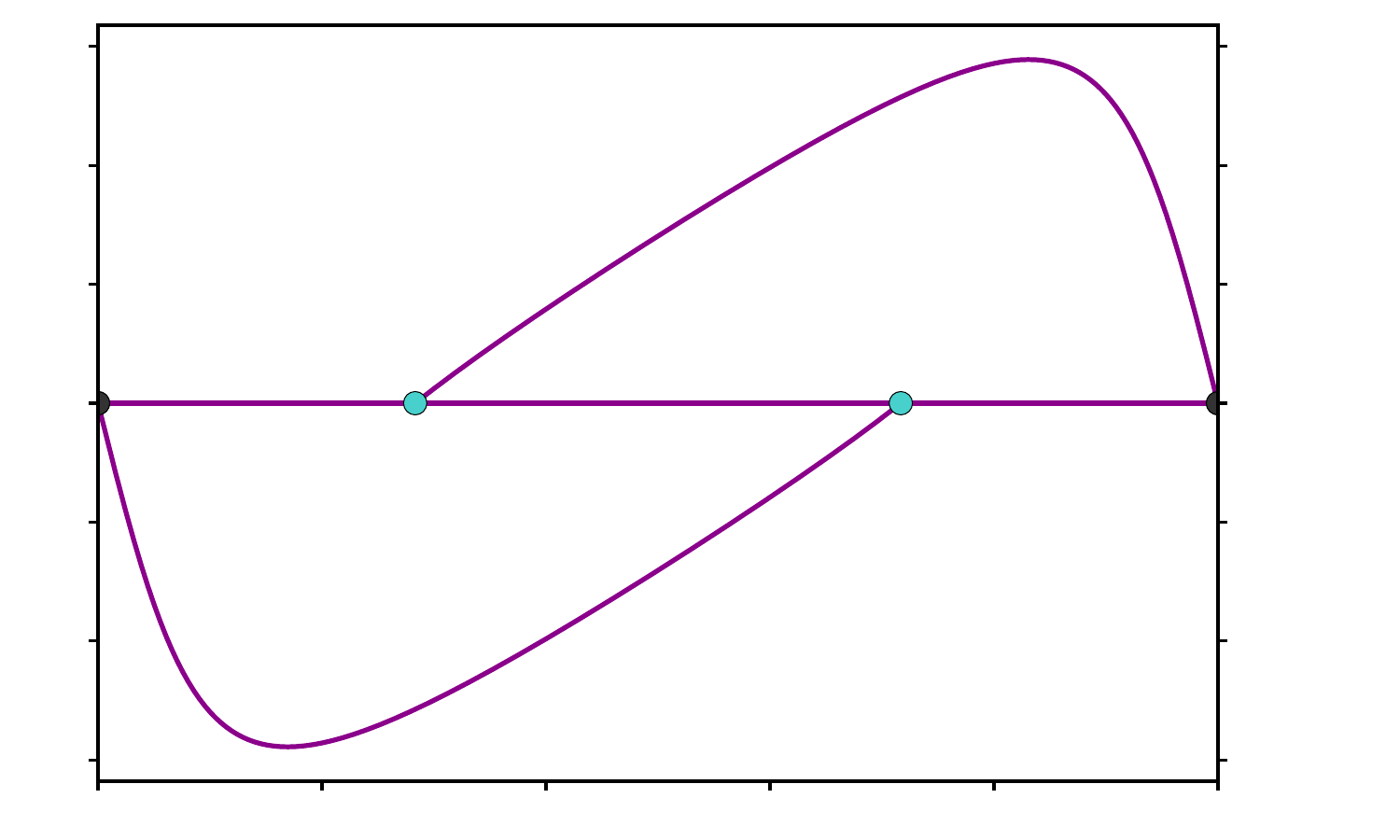}[\matplotlibfigurefont]
  \draw (0.086000, 0.952000) node[below right] {\small $o9_{34801}$};
  \draw (0.070000, 0.049167) node[below] {$0.0$};
  \draw (0.230000, 0.049167) node[below] {$0.2$};
  \draw (0.390000, 0.049167) node[below] {$0.4$};
  \draw (0.550000, 0.049167) node[below] {$0.6$};
  \draw (0.710000, 0.049167) node[below] {$0.8$};
  \draw (0.870000, 0.049167) node[below] {$1.0$};
  \draw (0.057500, 0.095273) node[left] {$-0.3$};
  \draw (0.057500, 0.236848) node[left] {$-0.2$};
  \draw (0.057500, 0.378424) node[left] {$-0.1$};
  \draw (0.057500, 0.520000) node[left] {$0.0$};
  \draw (0.057500, 0.661576) node[left] {$0.1$};
  \draw (0.057500, 0.803152) node[left] {$0.2$};
  \draw (0.057500, 0.944727) node[left] {$0.3$};
  \node[right] at (0.89, 0.07) {\small $\mu^*$};
  \node[left] at (0.014500, 0.97)  {\small $\lambda^*$};
\end{tikzoverlayabs}

  \end{center}

  \caption{Like those in Figures~\ref{fig:m016} and \ref{fig:v0220},
    this locus consists of arcs that run between parabolic and
    Alexander points, but a key difference is that the parabolic
    points lie on the horizontal axis.  The manifold $M = o9_{34801}$
    here is the exterior of a genus 2 fibered knot in $S^3$, and as
    usual $M(\mu) = S^3$.  (In SnapPy's default framing $\mu = (1,0)$
    and $\lambda = (-1, 1)$.)  Using Lemma~\ref{lemma:key}, we can
    order $M(r)$ for $r \in [-0.36, 3.6)$, where the endpoints of the
    interval are approximate.  In contrast, the interval of
    non-$L$-space slopes is $(-\infty, \infty)$ since the Alexander
    polynomial $t^4 - 2 t^3 + t^2 - 2 t + 1$ does not satisfy the
    condition of \cite[Corollary~1.3]{OSLensSpace2005}. This example
    illustrates the difficulty of strengthening the proof of
    Theorem~\ref{MainTheoremOne} to give a lower bound on the size of
    the interval $(-a, a)$ in the conclusion.}
  \label{fig:o9_34801}
\end{figure}

\begin{figure}
  \begin{center}
 \pgfkeys{/matplotlibfigure, default, width=11cm}%
 \begin{tikzoverlayabs}[width=\matplotlibfigurewidth]{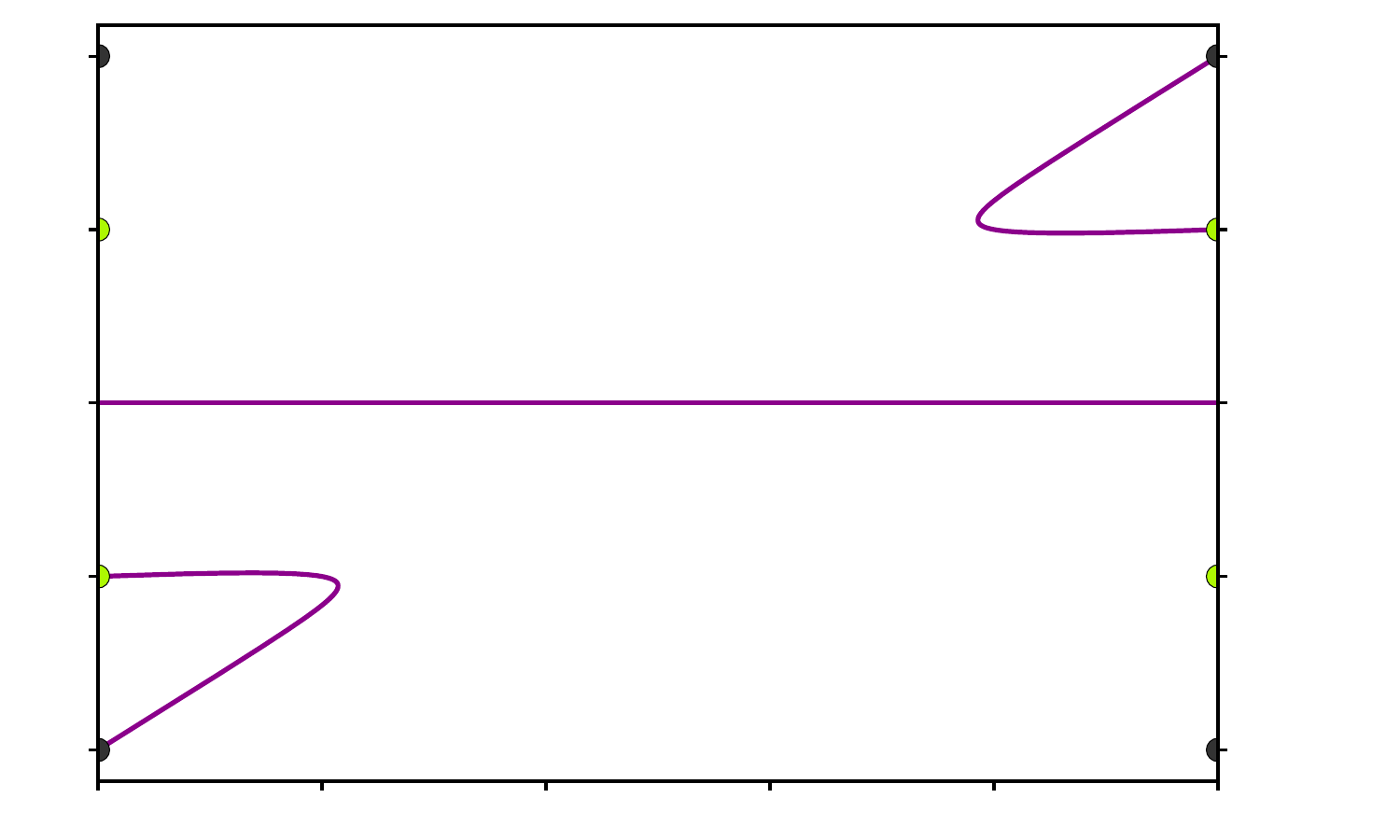}[\matplotlibfigurefont]
  \draw (0.086000, 0.952000) node[below right] {\small $t11462$};
  \draw (0.070000, 0.049167) node[below] {$0.0$};
  \draw (0.230000, 0.049167) node[below] {$0.2$};
  \draw (0.390000, 0.049167) node[below] {$0.4$};
  \draw (0.550000, 0.049167) node[below] {$0.6$};
  \draw (0.710000, 0.049167) node[below] {$0.8$};
  \draw (0.870000, 0.049167) node[below] {$1.0$};
  \draw (0.057500, 0.107269) node[left] {$-2$};
  \draw (0.057500, 0.313746) node[left] {$-1$};
  \draw (0.057500, 0.520224) node[left] {$0$};
  \draw (0.057500, 0.726701) node[left] {$1$};
  \draw (0.057500, 0.933179) node[left] {$2$};
  \node[right] at (0.89, 0.07) {\small $\mu^*$};
  \node[left] at (0.026500, 0.96)  {\small $\lambda^*$};
  \node at (0.2,0.2) {$B$};
  \node at (0.75,0.857) {$A$};
\end{tikzoverlayabs}

  \end{center}

  \caption{This locus has arcs that run between two parabolic points,
    rather than from parabolic to Alexander points.  The manifold
    $M = t11462$ is the exterior of a genus 3 fibered knot in $S^3$,
    namely $k8_{249}$ from \cite{ChampanerkarKofmanMullen2014}.  (In
    SnapPy's default framing $\mu = (1,0)$ and $\lambda = (3, 1)$, and
    as usual $M(\mu) = S^3$.)  Using Lemma~\ref{lemma:key}, we claim
    that we can order $M(r)$ for $r$ in
    $(-2, -1) \cup (-1, 2) \cup [a, \infty)$ where $a \approx 4.84$.
    For example, the arc labeled $A$ in the figure gives orderings for
    $r\in(-2, -1)$, and the arc labeled $B$ shown gives orderings for
    $r\in[a, \infty)$.  The translates of $A$ by positive shifts
    contribute the intervals $(-2/k, \, -1/k)$ for $k \geq 1$, as do
    all the translates of $B$ by negative shifts; the union of these
    inverals is $(-2, -1) \cup (-1, 0)$.  The other translates of $A$
    and $B$ contribute half-open intervals that contain, but are
    slightly larger than, $[1/k, \, 2/k)$ for $k \geq 1$; the union of
    these is $(0, 2)$. The interval of non-$L$-space slopes is
    $(-\infty, \infty)$ since the Alexander polynomial
    $t^6 - 2t^5 + 3t^4 - 5t^3 + 3t^2 - 2t + 1$ does not satisfy the
    condition of \cite[Corollary~1.3]{OSLensSpace2005}. This example
    illustrates the difficulty of strengthening the proof of
    Theorem~\ref{MainTheoremTwo}(\ref{item:fill}) to give an interval
    $(a, \infty)$ where $a$ is bounded above. }
  \label{fig:t11462}
\end{figure}

\begin{figure}
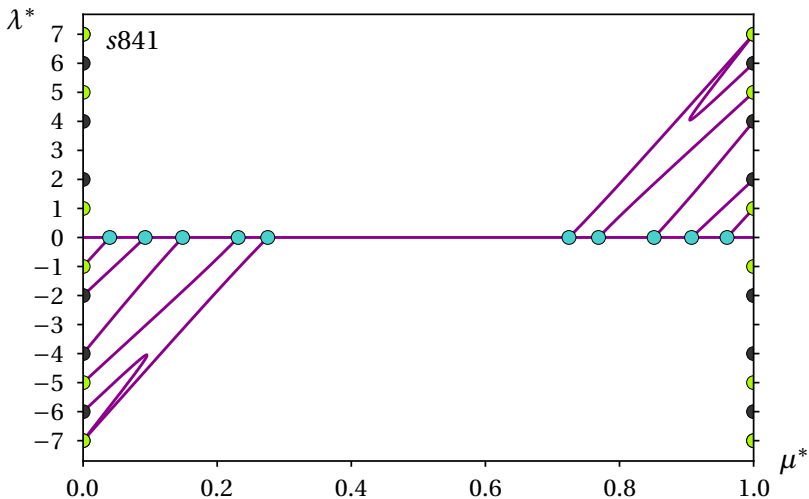

  \begin{center}
 \pgfkeys{/matplotlibfigure, default, width=11cm}%
 \begin{tikzoverlayabs}[width=\matplotlibfigurewidth]{plots/images/s841.pdf}[\matplotlibfigurefont]
  \draw (0.086000, 0.952000) node[below right] {\small $s841$};
  \draw (0.070000, 0.049167) node[below] {$0.0$};
  \draw (0.230000, 0.049167) node[below] {$0.2$};
  \draw (0.390000, 0.049167) node[below] {$0.4$};
  \draw (0.550000, 0.049167) node[below] {$0.6$};
  \draw (0.710000, 0.049167) node[below] {$0.8$};
  \draw (0.870000, 0.049167) node[below] {$1.0$};
  \draw (0.057500, 0.110946) node[left] {$-7$};
  \draw (0.057500, 0.169441) node[left] {$-6$};
  \draw (0.057500, 0.227935) node[left] {$-5$};
  \draw (0.057500, 0.286429) node[left] {$-4$};
  \draw (0.057500, 0.344924) node[left] {$-3$};
  \draw (0.057500, 0.403418) node[left] {$-2$};
  \draw (0.057500, 0.461913) node[left] {$-1$};
  \draw (0.057500, 0.520407) node[left] {$0$};
  \draw (0.057500, 0.578901) node[left] {$1$};
  \draw (0.057500, 0.637396) node[left] {$2$};
  \draw (0.057500, 0.695890) node[left] {$3$};
  \draw (0.057500, 0.754385) node[left] {$4$};
  \draw (0.057500, 0.812879) node[left] {$5$};
  \draw (0.057500, 0.871374) node[left] {$6$};
  \draw (0.057500, 0.929868) node[left] {$7$};
  \node[right] at (0.89, 0.07) {\small $\mu^*$};
  \node[left] at (0.026500, 0.96)  {\small $\lambda^*$};
\end{tikzoverlayabs}

  \end{center}

  \caption{This locus has a mix of the behaviors shown in the previous
    figures.  The manifold $M = s841$ is the exterior of a genus 7
    fibered knot in $S^3$, namely $k6_{38}$ from
    \cite{ChampanerkarKofmanMullen2014}. (In SnapPy's default framing
    $\mu = (1,0)$ and $\lambda = (22, 1)$, and as usual
    $M(\mu) = S^3$.)  Using Lemma~\ref{lemma:key}, we can order $M(r)$
    for $r$ in $(-7, \infty)$; the interval of non-$L$-space slopes is
    $(-\infty, \infty)$ since the Alexander polynomial does not
    satisfy the condition of
    \cite[Corollary~1.3]{OSLensSpace2005}. There are actually two
    distinct Galois conjugates of the holonomy representation that
    give rise to each of the points $(0, \pm 7)$ and $(1, \pm 7)$. 
    This is why there are two separate arcs of $\TEL{M}$ emerging from
    these parabolic points instead of the one you might expect from
    the proof of Theorem~\ref{MainTheoremTwo}.  }
  \label{fig:s841}
\end{figure}

\begin{figure}
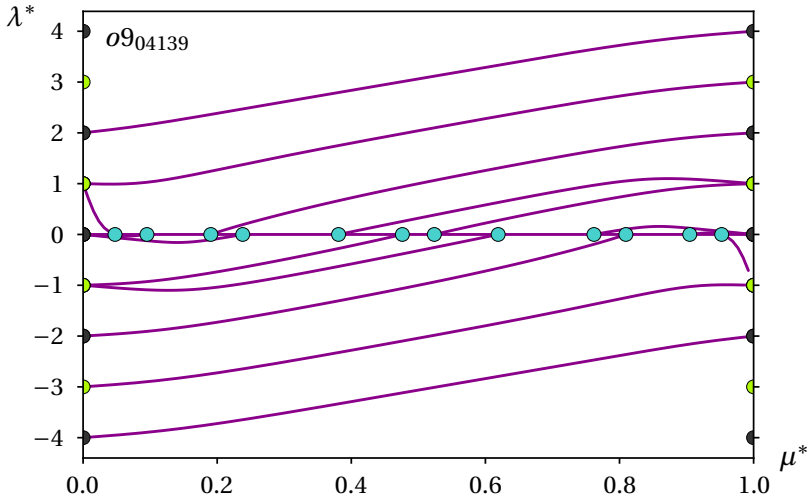

  \begin{center}
 \pgfkeys{/matplotlibfigure, default, width=11cm}%
 \begin{tikzoverlayabs}[width=\matplotlibfigurewidth]{plots/images/o9_04139.pdf}[\matplotlibfigurefont]
  \draw (0.086000, 0.952000) node[below right] {\small $o9_{04139}$};
  \draw (0.070000, 0.049167) node[below] {$0.0$};
  \draw (0.230000, 0.049167) node[below] {$0.2$};
  \draw (0.390000, 0.049167) node[below] {$0.4$};
  \draw (0.550000, 0.049167) node[below] {$0.6$};
  \draw (0.710000, 0.049167) node[below] {$0.8$};
  \draw (0.870000, 0.049167) node[below] {$1.0$};
  \draw (0.057500, 0.110948) node[left] {$-4$};
  \draw (0.057500, 0.213319) node[left] {$-3$};
  \draw (0.057500, 0.315690) node[left] {$-2$};
  \draw (0.057500, 0.418061) node[left] {$-1$};
  \draw (0.057500, 0.520432) node[left] {$0$};
  \draw (0.057500, 0.622803) node[left] {$1$};
  \draw (0.057500, 0.725174) node[left] {$2$};
  \draw (0.057500, 0.827545) node[left] {$3$};
  \draw (0.057500, 0.929916) node[left] {$4$};
  \node[right] at (0.89, 0.07) {\small $\mu^*$};
  \node[left] at (0.026500, 0.96)  {\small $\lambda^*$};
\end{tikzoverlayabs}

  \end{center}

  \caption{This locus has arcs between parabolics that are on opposite
    sides of the strip.  It is one of the few instances we found where
    Lemma~\ref{lemma:key} allows us to order $M(r)$ for all $r$ in
    $(-\infty, \infty)$. (Another such example is $v1971$ in
    Figure~\ref{fig:mult}.)  The manifold $M = o9_{04139}$ is the
    exterior of a genus 6 fibered knot in $S^3$.  As usual
    $M(\mu) = S^3$; in SnapPy's default framing $\mu = (1,0)$ and
    $\lambda = (-1, 1)$.  While the Alexander polynomial does statisfy
    \cite[Corollary~1.3]{OSLensSpace2005}, it turns out that
    the set of non-$L$-space slopes is $(-\infty, \infty)$; using the
    criterion of \cite{Roberts2001} and the program flipper
    \cite{flipper}, Mark Bell and the second author were able to show
    that every nontrivial Dehn filling on $M$ has a co-orientable taut
    foliation.  There are actually four distinct Galois conjugates of
    the holonomy representation that give rise to each of the points
    $(0, \pm 1)$ and $(1, \pm 1)$, explaining the arcs that emerge
    from them. }
  \label{fig:o9_04139}
\end{figure}

\begin{figure}
  \begin{center}
 \pgfkeys{/matplotlibfigure, default, width=11cm}%
 \begin{tikzoverlayabs}[width=\matplotlibfigurewidth]{plots/images/t03632.pdf}[\matplotlibfigurefont]
  \draw (0.086000, 0.952000) node[below right] {\small $t03632$};
  \draw (0.070000, 0.049167) node[below] {$0.0$};
  \draw (0.230000, 0.049167) node[below] {$0.2$};
  \draw (0.390000, 0.049167) node[below] {$0.4$};
  \draw (0.550000, 0.049167) node[below] {$0.6$};
  \draw (0.710000, 0.049167) node[below] {$0.8$};
  \draw (0.870000, 0.049167) node[below] {$1.0$};
  \draw (0.057500, 0.110639) node[left] {$-7$};
  \draw (0.057500, 0.169119) node[left] {$-6$};
  \draw (0.057500, 0.227599) node[left] {$-5$};
  \draw (0.057500, 0.286079) node[left] {$-4$};
  \draw (0.057500, 0.344559) node[left] {$-3$};
  \draw (0.057500, 0.403040) node[left] {$-2$};
  \draw (0.057500, 0.461520) node[left] {$-1$};
  \draw (0.057500, 0.520000) node[left] {$0$};
  \draw (0.057500, 0.578480) node[left] {$1$};
  \draw (0.057500, 0.636960) node[left] {$2$};
  \draw (0.057500, 0.695441) node[left] {$3$};
  \draw (0.057500, 0.753921) node[left] {$4$};
  \draw (0.057500, 0.812401) node[left] {$5$};
  \draw (0.057500, 0.870881) node[left] {$6$};
  \draw (0.057500, 0.929361) node[left] {$7$};
  \node[right] at (0.89, 0.07) {\small $\mu^*$};
  \node[left] at (0.026500, 0.96)  {\small $\lambda^*$};
\end{tikzoverlayabs}

  \end{center}

  \caption{The manifold $M = t03632$ is our first example that is not
    the exterior of a knot in $S^3$, with $M(\mu)$ being the small
    Seifert fibered space $S^2\left((2,1), (3, 1), (7, -6)\right)$
    which is a $\Z$-homology \3-sphere.  (In SnapPy's default framing
    $\mu = (1,0)$ and $\lambda = (-8, 1)$.)  One new phenomenon is that
    $\TEL{M}$ meets the sides of the strip at a point which is not an
    integer lattice point, namely the intersections at approximately
    $(0, \pm 1/2)$ and $(1, \pm 1/2)$.  Such nonintegral points of
    $\TEL{M}$ come from representations to $\Gtil$ which factor
    through $M(\mu)$, which is why they could not appear in the
    earlier examples where $M(\mu) = S^3$.  Another new phenomenon is
    that some arcs of $\TEL{M}$ cross the horizontal axis away from the
    Alexander points; the crossing points correspond to representations to $\Gtil$
    which factor through $M(\lambda)$ and have nonabelian image.  Such
    crossings also happen for certain exteriors of knots in $S^3$, for
    example with $o9_{21236}$, though not in any of the examples we
    show here.  }
  \label{fig:t03632}
\end{figure}

\begin{figure}
  \begin{center}

 \pgfkeys{/matplotlibfigure, default, width=6.9cm}%
 \begin{tikzoverlayabs}[width=\matplotlibfigurewidth]{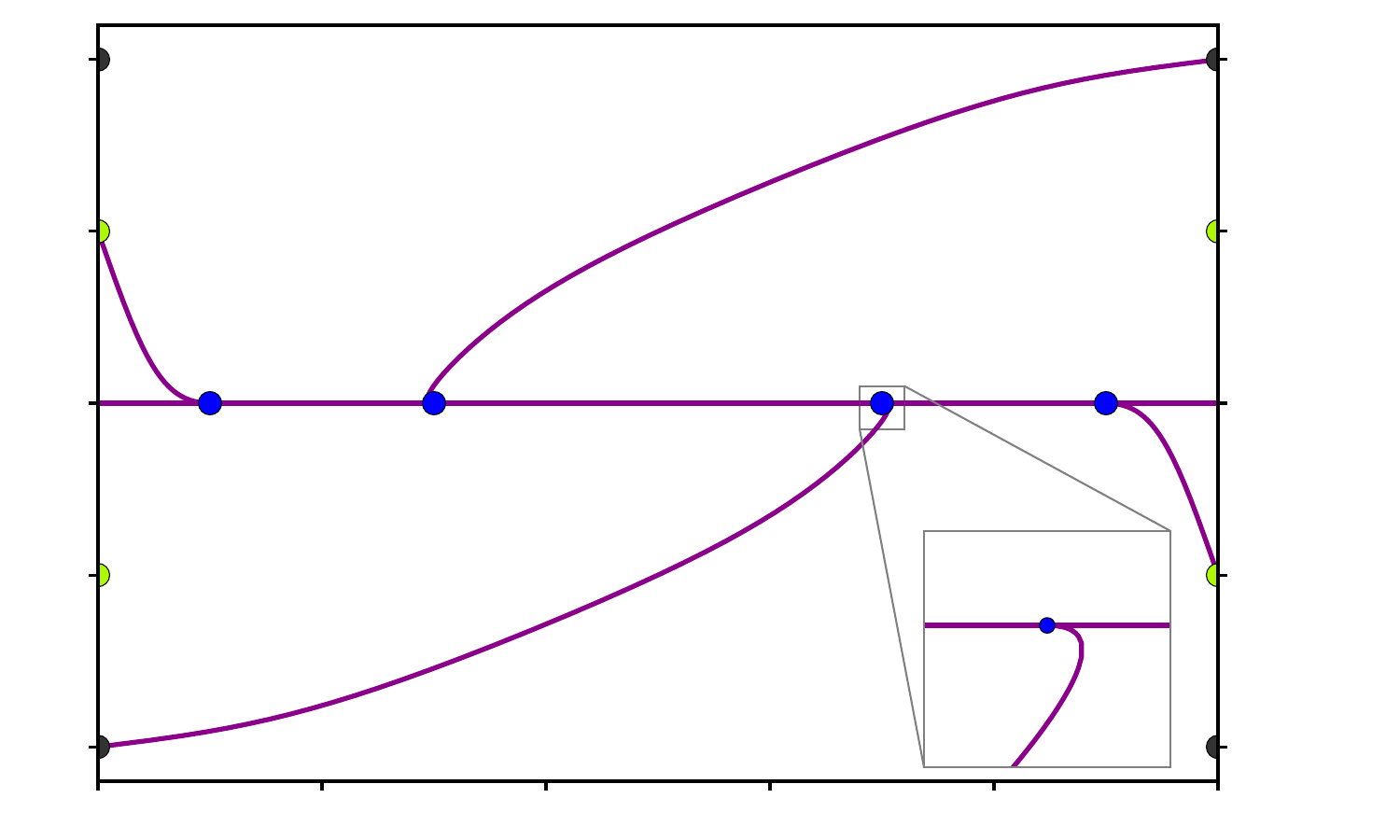}[\scriptsize]
  \draw (0.086000, 0.952000) node[below right] {\footnotesize $v1971$: genus = 4};
  \draw (0.070000, 0.049167) node[below] {$0.0$};
  \draw (0.230000, 0.049167) node[below] {$0.2$};
  \draw (0.390000, 0.049167) node[below] {$0.4$};
  \draw (0.550000, 0.049167) node[below] {$0.6$};
  \draw (0.710000, 0.049167) node[below] {$0.8$};
  \draw (0.870000, 0.049167) node[below] {$1.0$};
  \draw (0.057500, 0.110829) node[left] {$-2$};
  \draw (0.057500, 0.315415) node[left] {$-1$};
  \draw (0.057500, 0.520000) node[left] {$0$};
  \draw (0.057500, 0.724585) node[left] {$1$};
  \draw (0.057500, 0.929171) node[left] {$2$};
  \node[right] at (0.89, 0.07) {$\mu^*$};
  \node[left] at (0.026500, 0.96)  {$\lambda^*$};
\end{tikzoverlayabs}
 \hspace{-0.5cm}
 \pgfkeys{/matplotlibfigure, default, width=6.9cm}%
 \begin{tikzoverlayabs}[width=\matplotlibfigurewidth]{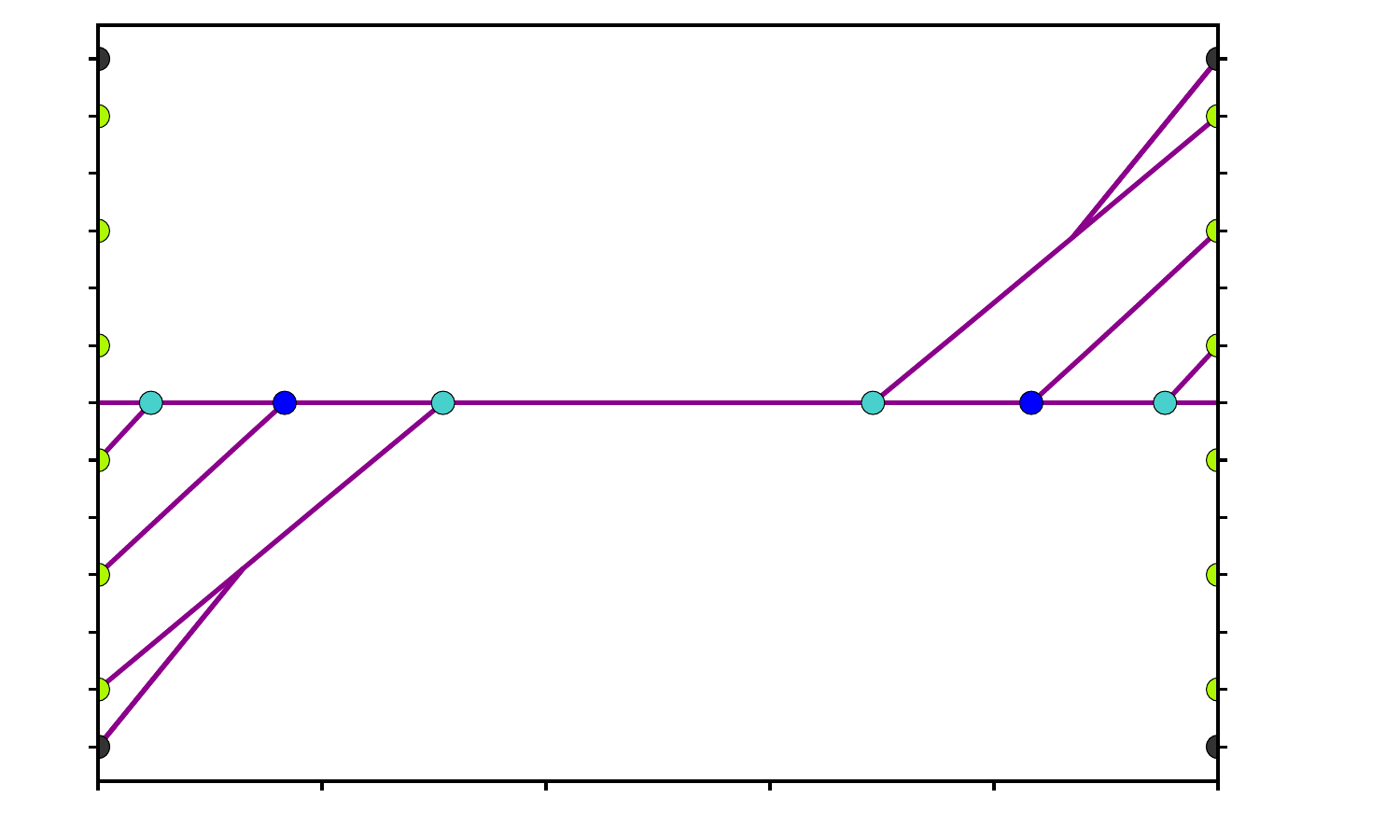}[\scriptsize]
  \draw (0.086000, 0.952000) node[below right] {\footnotesize $t12247$: genus = 5};
  \draw (0.070000, 0.049167) node[below] {$0.0$};
  \draw (0.230000, 0.049167) node[below] {$0.2$};
  \draw (0.390000, 0.049167) node[below] {$0.4$};
  \draw (0.550000, 0.049167) node[below] {$0.6$};
  \draw (0.710000, 0.049167) node[below] {$0.8$};
  \draw (0.870000, 0.049167) node[below] {$1.0$};
  \draw (0.057500, 0.110949) node[left] {$-6$};
  \draw (0.057500, 0.179198) node[left] {$-5$};
  \draw (0.057500, 0.247446) node[left] {$-4$};
  \draw (0.057500, 0.315694) node[left] {$-3$};
  \draw (0.057500, 0.383943) node[left] {$-2$};
  \draw (0.057500, 0.452191) node[left] {$-1$};
  \draw (0.057500, 0.520440) node[left] {$0$};
  \draw (0.057500, 0.588688) node[left] {$1$};
  \draw (0.057500, 0.656937) node[left] {$2$};
  \draw (0.057500, 0.725185) node[left] {$3$};
  \draw (0.057500, 0.793434) node[left] {$4$};
  \draw (0.057500, 0.861682) node[left] {$5$};
  \draw (0.057500, 0.929931) node[left] {$6$};
  \node[right] at (0.89, 0.07) {$\mu^*$};
  \node[left] at (0.026500, 0.96)  {$\lambda^*$};
\end{tikzoverlayabs}

    %
 \pgfkeys{/matplotlibfigure, default, width=11cm}%
 \begin{tikzoverlayabs}[width=\matplotlibfigurewidth]{plots/images/o9_30426.pdf}[\matplotlibfigurefont]
  \draw (0.086000, 0.952000) node[below right] {\small $o9_{30426}$: genus 2};
  \draw (0.070000, 0.049167) node[below] {$0.0$};
  \draw (0.230000, 0.049167) node[below] {$0.2$};
  \draw (0.390000, 0.049167) node[below] {$0.4$};
  \draw (0.550000, 0.049167) node[below] {$0.6$};
  \draw (0.710000, 0.049167) node[below] {$0.8$};
  \draw (0.870000, 0.049167) node[below] {$1.0$};
  \draw (0.057500, 0.110748) node[left] {$-1$};
  \draw (0.057500, 0.520000) node[left] {$0$};
  \draw (0.057500, 0.929252) node[left] {$1$};
  \node[right] at (0.89, 0.07) {\small $\mu^*$};
  \node[left] at (0.026500, 0.96)  {\small $\lambda^*$};
\end{tikzoverlayabs}

  \end{center}
  \caption{These three loci show some possible behaviors when the
    Alexander polynomial has a multiple root (for such a root, the
    corresponding Alexander point is dark blue rather than light
    blue).  The top left example is the knot exterior
    $v1971 = k7_{74}$ from \cite{ChampanerkarKofmanPatterson2004}.
    There, the arcs leaving the Alexander points are tangent to the
    horizontal axis, which is a common pattern for multiple roots.
    However, such tangencies are not required as the top right example
    of the knot exterior $t12247 = k8_{279} = 12n574$ from
    \cite{ChampanerkarKofmanMullen2014} shows.  The last example of
    $M = o9_{30426}$ is perhaps the most interesting: there are no
    nonhorizontal arcs of $\TEL{M}$ leaving the two Alexander points
    at all!  In fact, the corresponding reducible representations to
    $\GC$ are deformable to irreducible representations, but only into
    $\PSU_2$, not $\G$.  Here, $M(\mu)$ is the Seifert fibered space
    $S^2((2,1), (3,1), (11,-9))$, which is a $\Z$-homology \3-sphere, and
    there are three separate Galois conjugates of the holonomy
    representation at the points $(0, \pm 1)$ and $(1, \pm 1)$ in
    $\TEL{M}$.  The bottom example shows why we need the hypothesis
    that $\Delta_M$ has a simple root in the proof of
    Theorem~\ref{MainTheoremOne}, since the picture near the Alexander
    points does not match Figure~\ref{fig:arcstocone}.
    }
  \label{fig:mult}
\end{figure}

\begin{figure}
  \begin{center}
 \pgfkeys{/matplotlibfigure, default, width=11cm}%
 \begin{tikzoverlayabs}[width=\matplotlibfigurewidth]{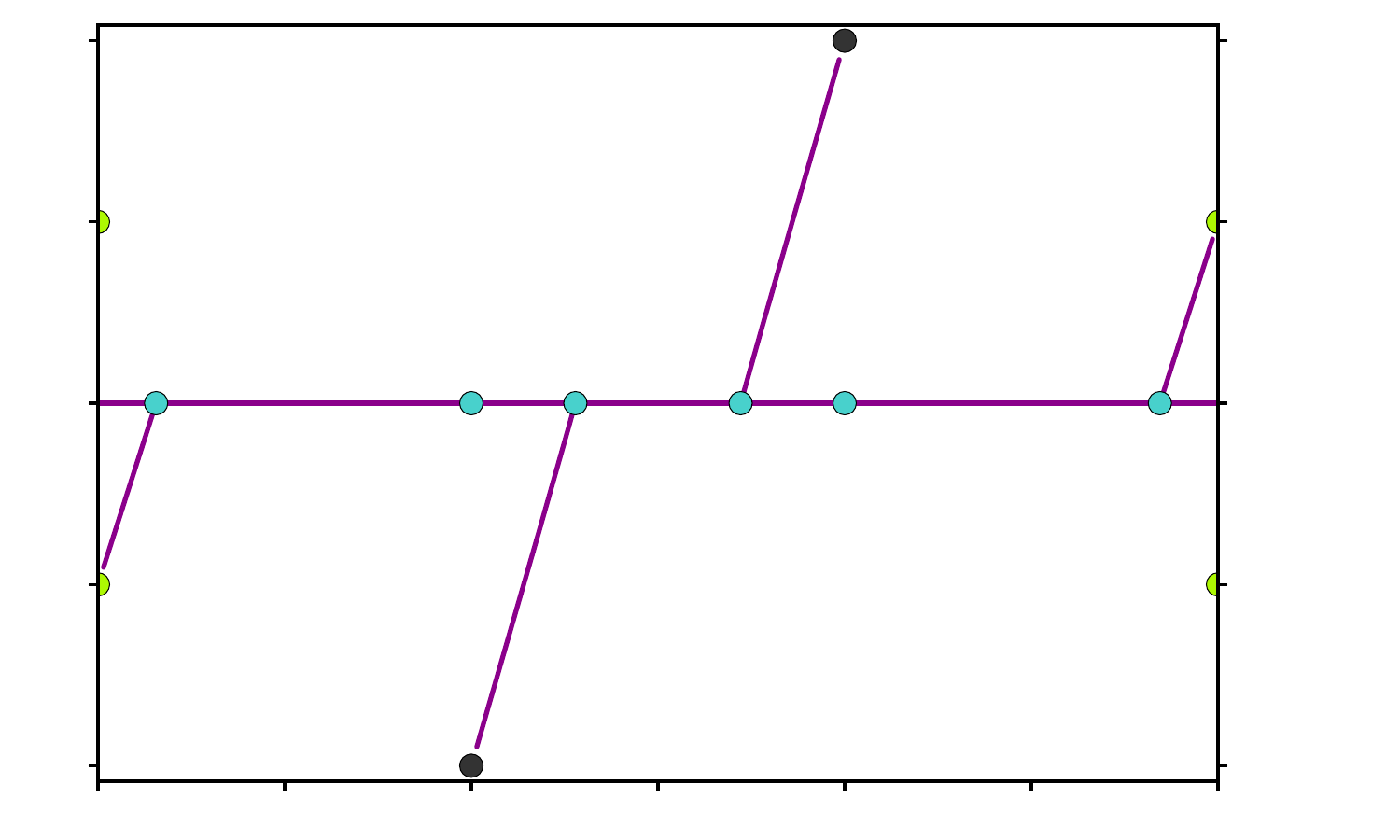}[\matplotlibfigurefont]
  \draw (0.086000, 0.952000) node[below right] {\small $v0170$};
  \draw (0.070000, 0.049167) node[below] {$0.0$};
  \draw (0.203333, 0.049167) node[below] {$0.5$};
  \draw (0.336667, 0.049167) node[below] {$1.0$};
  \draw (0.470000, 0.049167) node[below] {$1.5$};
  \draw (0.603333, 0.049167) node[below] {$2.0$};
  \draw (0.736667, 0.049167) node[below] {$2.5$};
  \draw (0.870000, 0.049167) node[below] {$3.0$};
  \draw (0.057500, 0.088447) node[left] {$-2$};
  \draw (0.057500, 0.304224) node[left] {$-1$};
  \draw (0.057500, 0.520000) node[left] {$0$};
  \draw (0.057500, 0.735776) node[left] {$1$};
  \draw (0.057500, 0.951553) node[left] {$2$};
  \node[right] at (0.89, 0.07) {\small $\mu^*$};
  \node[left] at (0.026500, 0.96)  {\small $\lambda^*$};
\end{tikzoverlayabs}

  \end{center}

  \caption{The manifold $M = v0170$ is our first example of something
    that is not a $\Z$-homology solid torus.  In particular, the
    homological longitude $\lambda$ has order $k=3$ in $H_1(M; \Z)$, which
    is why the shown fundamental domain for the action of
    $T \leq \SymTEL{M}$ has width 3.  The filling $M(\mu)$ is the lens
    space $L(9, 2)$ with the core of the added solid torus
    representing three times a generator of
    $H_1(L(9, 2); \Z) \cong \Z/9\Z$.  (In SnapPy's default framing
    $\mu = (1,0)$ and $\lambda = (-5, 1)$.)  The manifold $M$ fibers
    over the circle with fiber a genus 4 surface with 3 boundary
    components.  For a root $\xi$ of $\Delta_M$, the corresponding
    Alexander point is plotted as $3 \arg(\xi)/ 2\pi$ to account for
    the fact that $\mu$ maps to three times a generator in
    $\Honefree$.  The two Alexander points at $(1,0)$ and $(2,0)$
    demonstrate the necessity of the hypothesis that $\xi^k \neq 1$
    for the proof of Theorem~\ref{MainTheoremOne}, since the local
    picture there does not match Figure~\ref{fig:arcstocone}.  The
    trace field of $M$ has 6 real embeddings, but above there is only
    one parabolic point modulo $\SymTEL{M}$; this is because most of
    the Galois conjugates into $\G$ do not lift to $\Gtil$.
    }
  \label{fig:v0170}
\end{figure}

\begin{figure}
  \begin{center}
 \pgfkeys{/matplotlibfigure, default, width=11cm}%
 \begin{tikzoverlayabs}[width=\matplotlibfigurewidth]{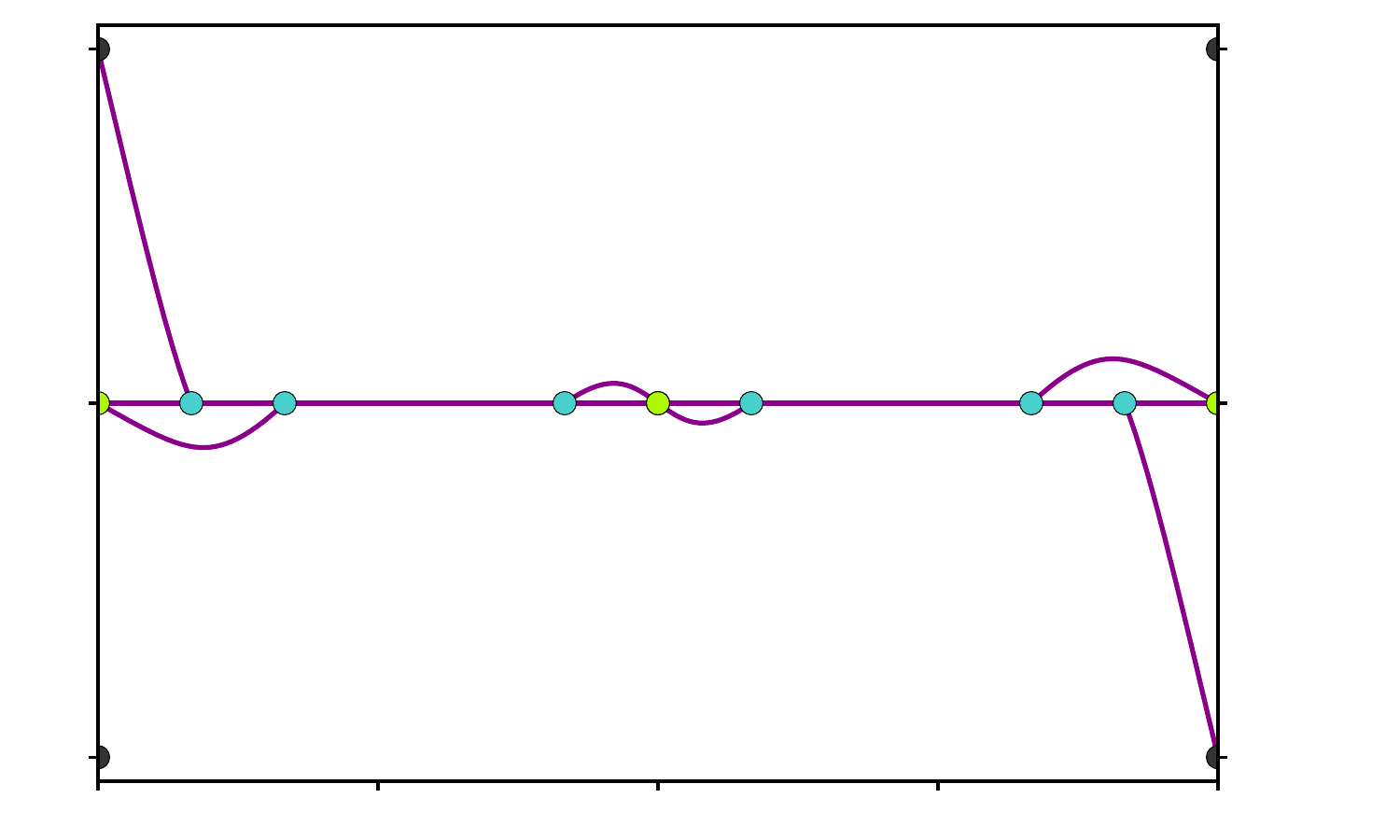}[\matplotlibfigurefont]
  \draw (0.086000, 0.952000) node[below right] {\small $v1108$};
  \draw (0.070000, 0.049167) node[below] {$0.0$};
  \draw (0.270000, 0.049167) node[below] {$0.5$};
  \draw (0.470000, 0.049167) node[below] {$1.0$};
  \draw (0.670000, 0.049167) node[below] {$1.5$};
  \draw (0.870000, 0.049167) node[below] {$2.0$};
  \draw (0.057500, 0.098563) node[left] {$-1$};
  \draw (0.057500, 0.520000) node[left] {$0$};
  \draw (0.057500, 0.941437) node[left] {$1$};
  \node[right] at (0.89, 0.07) {\small $\mu^*$};
  \node[left] at (0.026500, 0.96)  {\small $\lambda^*$};
\end{tikzoverlayabs}

  \end{center}

  \caption{The manifold $M = v1108$ is another example that is not a
    $\Z$-homology solid torus.  In particular, the homological
    longitude $\lambda$ has order $k=2$ in $H_1(M; \Z)$ and the
    filling $M(\mu)$ is the lens space $L(4, 1)$. (For once, the
    $(\mu, \lambda)$ framing is the same as SnapPy's default.)  The
    manifold $M$ fibers over the circle with fiber a genus 3 surface
    with 2 boundary components.  The parabolic points $(0, 0)$,
    $(1,0)$, and $(2, 0)$ are all double, that is, come from two
    distinct Galois conjugates of the holonomy representation.  In
    addition to being a parabolic point, the point (1,0) is also a
    simple Alexander point.  However, this Alexander point doesn't
    contribute an arc to $\TEL{M}$ because it corresponds to the root
    $\xi = -1$ and $\xi^k = 1$.  }
  \label{fig:v1108}
\end{figure}



We now give 12 examples of translation extension loci which will motivate
the various results in this paper.  Indeed, for us these examples form
the intellectual core of this paper, directly inspiring all of the
theorems here.  The reader should peruse these examples carefully
before continuing, as they illustrate both the ideas and the potential
pitfalls in the proofs of the main theorems.  The first pictures of
this type appeared in Figure 8 of \cite{Khoi2003}.

The 12 examples come from hyperbolic \3-manifolds that have ideal
triangulations with at most 9 tetrahedra, and the nomenclature follows
\cite{CallahanHildebrandWeeks1999, Burton2014, SnapPy}.  We selected
them from a sample of about 600 translation extension loci of
such manifolds to illustrate a range of behaviors.

We start with $M = m016$, which is homeomorphic to the exterior of the
$(-2, 3, 7)$--pretzel knot in $S^3$.  Its translation extension locus
is shown in Figure~\ref{fig:m016}, and we discuss it in detail to explain
how to read the plots here.  We use a homological framing
$(\mu, \lambda)$ where $M(\mu) = S^3$ and $M(-18)$ and $M(-19)$
are lens spaces.  (In SnapPy's default framing, $\mu = (1, 0)$ and
$\lambda = (18, 1)$.)  The figure shows the intersection of $\TEL{M}$
with the strip $0 \leq x \leq 1$ in our usual
$(\mu^*, \lambda^*)$-coordinates on $H^1(\partial M; \R)$.  This strip
is a fundamental domain for the action of $T \leq \SymTEL{M}$ which is
generated by translation by $\mu^*$.  The symmetry of $\TEL{M}$ under
the element of $\SymTEL{M}$ which is $\pi$-rotation about $(1/2, 0)$
is visually clear.

There are 16 parabolic points of $\TEL{M}$ in this picture, which are
marked by the dark and light half disks on the vertical sides of the
strip.  (As mentioned, parabolic points are necessarily integer
lattice points.)  When the sides of the strip are glued by $T$, these
16 half disks are paired up to form 8 full discs; down in the full quotient
$\TELquo{M} = \TEL{M}/\SymTEL{M}$, there are only 4 parabolic points.

The color of the half disks indicates when the corresponding
representation to $G$ is Galois conjugate to the holonomy
representation of the complete hyperbolic structure on $M$ (see
Section~\ref{sec:galois} for the definition), with the light green
being ``geometric'' in this limited sense and black indicating other
``random'' parabolic $G$-representations.

There are no ideal points in this $\TEL{M}$ or in any of our example
translation loci; all of the manifolds involved are small, and
Lemma~\ref{lem:smallnoideal} below rules out any ideal points in this
situation. (The smallness of these manifolds was checked using
Regina~\cite{Regina}.)

The disks on the $\mu^*$-axis $L_\lambda$ correspond to the roots of
the Alexander polynomial that lie on the unit circle.  Specifically,
for each such root $\xi$, we plot $\left(\frac{\arg(\xi)}{2\pi}, 0\right)$
and call this an \emph{Alexander point}.  Simple roots, such as all
the ones for this manifold, are shown as light turquoise disks; in
later examples, multiple roots will be shown in dark blue.  Notice
that there is a nonhorizonal arc of $\TEL{M}$ leaving each Alexander
point. Such arcs are used to prove Theorem~\ref{MainTheoremOne}
and come from deforming an abelian representation to irreducible
representations, which is only possible at Alexander points (see
Section~\ref{sec:alexordering} for a complete discussion).

Since the line $L_r$ has slope $-r$ in our picture, and $M$ has no
reducible Dehn fillings, we see that Lemma~\ref{lemma:key} applies to
show $M(r)$ is orderable for all $r \in (-6, \infty)$.  To compare
with Conjecture~\ref{BGWconjecture}, the interval of non-$L$-space
fillings for $M$ is precisely $(-9, \infty)$ for the following
reason. As $M$ has two lens space fillings, it is \emph{Floer simple}
in the sense of \cite{RasmussenRasmussen2015}, and hence the interval
of $L$-space fillings is $[-\infty, -(2g -1)]$ where $g$ is the
Seifert genus; the latter is $5$ as that is the genus of the fiber in
the fibration of $M$ over the circle.  In fact, both
Theorems~\ref{MainTheoremOne} and \ref{MainTheoremTwo} apply to $M$,
though we got much better results by applying Lemma~\ref{lemma:key}
directly.

A summary of the overall structure of this $\TEL{M}$ is that,
besides the horizontal line of abelian representations, it consists of
diagonal arcs with a parabolic point at one end and an Alexander point
at the other. Moreover, none of the arcs overlap.  This pattern was
quite common in our sample, and a much more complicated instance is
shown in Figure~\ref{fig:v0220}.  Overall, there are many different
behaviors that are relevant to us here; please see Figures
\ref{fig:o9_34801}--\ref{fig:v1108} and their captions for details. 

\subsection{Numerical methods and caveats}

To compute points in $X_\R(M)$ corresponding to representations which
send $\mu$ to an elliptic isometry, we worked with the \emph{gluing
  variety} $\cG(\cT)$, where $\cT$ is an ideal triangulation of
$M$. Each $\cG(\cT)$ is an affine algebraic set described in
coordinates which are the shape parameters for the tetrahedra in
$\cT$. There is one equation for each edge, specifying that the
tetrahedra match around that edge, and the variety determined by these
has dimension $1$ in our examples.  The holonomy $H_\mu$ is the
square of an eigenvalue of the image of $\mu$, and can be expressed in
these coordinates to give a polynomial map $H_\mu:\cG(\cT)\to\C$.
We randomly chose a complex number $z_0$ near the unit circle and used
homotopy continuation with a start system given by the mixed volume
method to find the $0$-dimensional algebraic set $H_\mu^{-1}(z_0)$.
This computation was done with PHCpack \cite{Verschelde1999,
  PHCpack}. Once the fiber over $z_0$ had been computed, we used the
Newton-Raphson method to do path-lifting to our branched cover of $\C$
by $\cG(\cT)$.  With some care to avoid singularities, this allowed us
to compute the fiber over all $N^{\mathit{th}}$ roots of unity, where
$N$ was typically $128$ to start with, but sometimes needed to be
increased.  Each point of one of these fibers determined a character
in $X(M)$ corresponding to a representation sending $\mu$ to an
elliptic with rotation angle $2k\pi/N$ for some $k$.  These
representations were computed to standard floating point accuracy (53
bits) and it was numerically decided which of them gave points of
$\XG{M}$.
 
Once we had constructed a representation $\rho \maps \pi_1(M) \to G$,
we used the Newton\hyp Raphson method to polish it to very high
precision (typically 1{,}000 bits).  The Euler cocycle of
Section~\ref{sec:euler} was then computed and used to lift $\rho$ to
$\rhotil \maps \pi_1(M) \to \Gtil$. The peripheral translations of
$\rhotil$ were computed and then normalized under the action of
$\SymTEL{M}$ to be plotted in the figure.  (For the examples in
Figures \ref{fig:v0170} and \ref{fig:v1108}, frequently there was no
lift $\rhotil$ as $\euler{\rho}$ was nonzero in $H^2(M; \Z)$.)  For
each figure, we sampled as many as $2{,}000$ different holonomy values
for $\mu$ in order to get the smooth curves you see.

While we believe our plots of these loci are accurate, they were not
rigorously computed.  Moreover, there are reasons beyond numerical
accuracy that sometimes cause computations using gluing varieties to
produce incomplete results, with some arcs missing from the diagram.
(On the other hand, using gluing varieties rather than character
varieties hugely simplifies the computation, making it feasible to
handle larger examples.)  The key issue is that the natural map
$\cG(\cT) \to X(M)$ is not always onto; while each irreducible
component of $\cG(\cT)$ corresponds to some irreducible component of
$X(M)$, there can be components of $X(M)$ that are not seen in
$\cG(\cT)$ \cite[\S 10.3]{Dunfield2003}.  As $\cG(\cT)$ depends
fundamentally on the triangulation $\cT$, such ``missing components''
can sometimes be dealt with by changing the triangulation.  In other
cases, especially when there are components corresponding to
representations that factor through a proper quotient of $\pi_1(M)$,
changing the triangulation did not help.  (In \cite{Segerman2012}
Segerman constructs an ``extended'' version of $\cG(\cT)$ and shows
that there always exists a triangulation such that all components of
the character variety can be parametrized in terms of the associated
extended gluing variety. However, his technique has not been
implemented in software.)

In some cases we were able to detect missing components from
inconsistencies in our picture of $\TEL{M}$.  In the case of
$M = m389$, we obtained a plot of $\TEL{M}$ with a simple Alexander
point from which no arcs emerged, violating the proof of
Theorem~\ref{MainTheoremOne}.  It turns out that for the Dehn filling
$Y = m389(\mu + \lambda)$ there is a surjection from $\pi_1(Y)$ onto
$\PSL{2}{\Z} \cong C_2 * C_3$, giving a component of $X(M)$ that could
not be seen by our $\cG(\cT)$.  Another fairly common situation that
leads to missing components is when a Dehn filling $Y$ contains an
essential torus: if $X(Y)$ is nonempty, then $\dim_\C X(Y) \geq 1$
because it is possible to ``bend'' representations using the structure
of $\pi_1(Y)$ as a free product with amalgamation along the
$\Z^2$-subgroup corresponding to the essential torus.  It seems to be
common that components of $X(M)$ obtained by bending do not appear in
the image of $\cG(\cT)$.

Another issue with gluing varieties is that points at infinity of
$\cG(\cT)$ can correspond to non-ideal points of the character variety
$X(M)$.  Geometrically, this means that the shapes of some tetrahedra
degenerate even though the associated characters converge.  We
call these \emph{Tillmann points} after \cite{Tillmann2012}.  These
points cause numerical difficulties and complicate determining which
points of $\TEL{M}$ are ideal. Such Tillmann points occur reasonably
frequently in our examples.  Specifically, we used Goerner's
database \cite{GoernerDatabase} of boundary parabolic
representations to $\GC$ to identify which of the parabolic points
correspond to Galois conjugates of the holonomy representation of the
hyperbolic structure, and as a check to our own computations.  While
Goerner used Ptolemy equations rather than gluing equations, his
method still depends on a choice of triangulation, and parabolic
representations can go missing for the same reason.  In our examples,
there were five cases where our plot of $\TEL{M}$ indicated a
parabolic or ideal point on the vertical sides of the diagram that
were not present in \cite{GoernerDatabase}. For example, this occurred
with the point $(0, -2)$ in Figure~\ref{fig:t11462}.  Using
Lemma~\ref{lem:smallnoideal}, we were able to conclude that these are
all Tillmann points missed by our preferred triangulation, rather than
ideal points.

\section{Proof of the structure theorem}
\label{sec:structureproofs}

This section is devoted to the proofs of Theorem~\ref{thm:structure},
Lemma~\ref{lemma:key}, and Lemma~\ref{lemma:branched}.  An impatient
and trusting reader can skip ahead as the rest of the paper only
relies on the statements of these three results.  We begin attacking
Theorem~\ref{thm:structure} by proving the following two lemmas.

\begin{lemma}\label{lem:telsym}
  The extension locus $\TEL{M}$ is invariant under $\SymTEL{M}$.
\end{lemma}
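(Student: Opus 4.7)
The plan is to verify invariance under the two types of generators of $\SymTEL{M}$ separately: translations by elements of $T = \inc^*(H^1(M;\Z))$, and the negation $x\mapsto -x$. In each case I would exhibit an explicit operation on $\RGtilPE{M}$ that induces the given affine symmetry on the image under $\trans \circ \inc^*$; since the symmetry is continuous on $H^1(\partial M;\R)$, invariance of the image implies invariance of its closure, which is $\TEL{M}$ by definition.

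For translation invariance, given $\rhotil \in \RGtilPE{M}$ and $\phi \in H^1(M;\Z) = \Hom(\pi_1(M),\Z)$, I would apply the construction of Section~\ref{sec:paramlifts}: define $\rhotil'(\gamma) = \rhotil(\gamma)\,s^{\phi(\gamma)}$. Centrality of $s$ makes $\rhotil'$ a homomorphism. Its restriction to $\pi_1(\partial M)$ agrees with $\rhotil|_{\pi_1(\partial M)}$ after projecting via $p$, so elliptic/parabolic/central type is preserved and thus $\rhotil' \in \RGtilPE{M}$. Using $\trans(\gtil\,s^k) = \trans(\gtil)+k$ from Section~\ref{sec:transnum}, the translation extension of $\rhotil'$ differs from that of $\rhotil$ by $\inc^*(\phi)\in T$, giving invariance under the subgroup of horizontal translations.

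For the reflection $x \mapsto -x$, the plan is to use conjugation by an orientation-reversing lift of an element of $\PGL{2}{\R}\setminus G$. Explicitly, using the identification $\projsp^1(\R) = \R/\Z$, choose $\tau\in\PGL{2}{\R}$ reversing orientation on $\projsp^1(\R)$ and let $J \maps \R\to\R$ be an orientation-reversing lift (e.g.\ $J(x)=-x$). Since $G$ is normal in $\PGL{2}{\R}$ and $J$ maps lifts to lifts, conjugation by $J$ defines a group automorphism $\Phi \maps \Gtil \to \Gtil$. Because $J$ reverses orientation on $\R$, a direct computation from the definition \eqref{eq:transdef} gives $\trans(\Phi(\gtil)) = -\trans(\gtil)$. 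Then $\rhotil' = \Phi\circ\rhotil$ is again in $\RGtilPE{M}$ (elliptic/parabolic/central type is preserved by $\Phi$), and its translation extension is the negation of that of $\rhotil$, establishing the second symmetry.

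The argument is straightforward once the correct operations are identified; the only mild subtlety I anticipate is verifying that the reflection-inducing conjugation actually preserves $\Gtil \subset \Homeo^+(\R)$, but this follows from the fact that conjugating an orientation-preserving homeomorphism by an orientation-reversing one remains orientation-preserving, together with normality of $G$ in $\PGL{2}{\R}$. Combining the two operations, every element of $\SymTEL{M}$ acts by pulling back representations through an automorphism of $\Gtil$ and applying a shift by a central character, so the image of $\trans\circ\inc^*$ on $\RGtilPE{M}$ is $\SymTEL{M}$-invariant, and hence so is its closure $\TEL{M}$.
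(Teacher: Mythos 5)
Your proposal is correct and matches the paper's argument essentially step for step: the translation invariance via $\gamma\mapsto\rhotil(\gamma)s^{\phi(\gamma)}$ and the property $\trans(\gtil\, s^k)=\trans(\gtil)+k$, and the reflection via conjugation by $y\mapsto -y$ on $\R$, justified by normality of $G$ in $\PGL{2}{\R}$, are exactly the operations used in the paper. No gaps.
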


\begin{lemma}\label{lem:finitecomp}
  The quotient space $\TELquo{M} = \TEL{M}/\SymTEL{M}$ has finitely
  many connected components.
\end{lemma}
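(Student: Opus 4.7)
The plan is to first prove that $\TEL{M}/T$ has finitely many connected components, where $T\subset\SymTEL{M}$ denotes the translation subgroup. Since $\SymTEL{M}$ is generated by $T$ together with the involution $x\mapsto -x$, the quotient $\TEL{M}/\SymTEL{M}$ is a further $\Z/2$\hyp quotient of $\TEL{M}/T$ and therefore inherits the finiteness conclusion.

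First I would observe that $\RG{M}=\Hom(\pi_1(M),G)$ is a real semialgebraic set---obtained by cutting $G^k$ out by the relations of a finite presentation of $\pi_1(M)$---and that the PEC condition on the boundary restriction is semialgebraic, since it amounts to inequalities on $\tr^2_\mu$ and $\tr^2_\lambda$. By Section~\ref{sec:euler} the Euler class is locally constant on $\RG{M}$, so the liftable subset $\RGlift{M}\subset\RGPE{M}$ of representations with vanishing Euler class is a union of connected components of $\RGPE{M}$, hence is itself semialgebraic. Consequently $\RGlift{M}$ has only finitely many connected components.

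Next I would introduce the map
\[
\Psi\maps\RGlift{M}\longrightarrow H^1(\partial M;\R)/T,\qquad
\Psi(\rho)=\bigl[\trans\circ\inc^*(\rhotil)\bigr],
\]
where $\rhotil\in\RGtilPE{M}$ is any lift of $\rho$ to $\Gtil$. By Section~\ref{sec:paramlifts}, two lifts of the same $\rho$ differ by some $\phi\in H^1(\pi_1(M);\Z)$, which alters the boundary translation class by $\inc^*(\phi)\in T$; hence $\Psi$ is well defined. To see continuity, one uses Section~\ref{sec:euler} to choose locally a continuous family of sections whose Euler cocycles vanish, giving continuous local lifts, and the result follows since $\trans\maps\Gtil\to\R$ is continuous. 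Therefore $\Psi(\RGlift{M})$ has at most as many path components as $\RGlift{M}$, and in particular only finitely many.

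Finally, the $T$\hyp invariant set $A:=(\trans\circ\inc^*)(\RGtilPE{M})$ projects onto $\Psi(\RGlift{M})$ under the covering map $\pi\maps H^1(\partial M;\R)\to H^1(\partial M;\R)/T$. A routine argument using that $\pi$ is an open covering and $A$ is $T$\hyp invariant gives $\pi(\overline{A})=\overline{\pi(A)}$---that is, $\TEL{M}/T$ equals the closure of $\Psi(\RGlift{M})$ in the cylinder $H^1(\partial M;\R)/T$. Passage to closure can only merge connected components, so $\TEL{M}/T$ has finitely many, and quotienting by the residual $\Z/2$ preserves this. I expect the main delicate point to be verifying continuity of $\Psi$: since $\Gtil$ is nonalgebraic and $\trans$ is not a polynomial function, one cannot appeal to pure algebraic geometry, but the continuous-family-of-lifts construction from Section~\ref{sec:euler} is exactly what makes this step work.
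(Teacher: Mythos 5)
Your proposal is correct and follows essentially the same route as the paper: both pass to the liftable locus $\RGlift{M}$, use local constancy of the Euler class to see it is a union of components of the semialgebraic set $\RGPE{M}$ (hence has finitely many components), descend $\trans\circ\inc^*$ to a continuous map on $\RGlift{M}$ via the $H^1(M;\Z)$-action on lifts, and conclude since taking closures cannot increase the number of components. The only cosmetic difference is that you quotient by $T$ first and then by the residual $\Z/2$, and you spell out the interchange of closure and quotient, which the paper leaves implicit.
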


\begin{proof}[Proof of Lemma~\ref{lem:telsym}]
Since invariance is preserved under taking closures, it suffices to
show that the image $I$ of $\RGtilPE{M}$ under $\trans \circ \inc^*$
is invariant under $\SymTEL{M}$.  Consider any
$\rhotil \in \RGtilPE{M}$ and let $t = \trans(\rhotil \circ \inc)$ be
the corresponding point in $I$.  If $\phi \in H^1(M; \Z)$, then, as
described in Section~\ref{sec:paramlifts}, one has
$\phi \cdot \rhotil$ in $\RGtilPE{M}$ which is also a lift of
$p \circ \rhotil$.  The image of $\phi \cdot \rhotil$ in $I$ differs
from $t$ via translation by $\inc^*(\phi)$ in $H^1(\partial M; \Z)$
since
\[
\trans\left(\phi \cdot \rhotil(\gamma)\right) = 
\trans\left(\rhotil(\gamma) s^{\phi(\gamma)} \right) =
\trans\left(\rhotil(\gamma)\right) + \phi(\gamma) 
\mtext{for all $\gamma \in \pi_1(M)$.}
\]
  In particular, this shows
that $I$ is invariant under translation by elements of
$T = \inc^*\left(H^1(M; \Z)\right) \subset H^1(\partial M; \R)$.

To complete the proof, it remains to show $I$ is invariant under
$x \mapsto -x$.  To this end, we will exhibit an automorphism
$\nu \maps \Gtil \to \Gtil$ where
$\trans\left(\nu(\gtil)\right) = -\trans(\gtil)$ for all
$\gtil \in \Gtil$.  Given such a $\nu$, the image of
$\nu \circ \rhotil$ in $I$ will be $-t$, proving invariance.  To
start, consider the element $r \in \Homeo(\R)$ which sends
$y \mapsto -y$.  Conjugation by $r$ preserves the subgroup $\Gtil$
because $r$ descends to the map of $\projsp^1(\R)$ induced by
$C = \mysmallmatrix{1}{0}{0}{-1} \in \PGL{2}{\R}$, and conjugation by
$C$ normalizes $\G \leq \PGL{2}{\R}$.  Let $\nu$ be conjugation of
$\Gtil$ by $r$.  Taking $x = 0$ in the definition (\ref{eq:transdef})
of translation number we get
\[
\trans\left(\nu(\gtil)\right) = \lim_{n \to \infty} \frac{
  \left(r \circ \gtil \circ r\right)^n(0)}{n} = \lim_{n \to \infty}
\frac{-\gtil^n(-0)}{n} = -\trans(\gtil)
\]
as required. 
\end{proof}

\begin{proof}[Proof of Lemma~\ref{lem:finitecomp}]
Consider the map $P \maps \RGtil{M} \to \RG{M}$ induced by
$p \maps \Gtil \to \G$.  Let $\RGPE{M}$ be the subset of $\RG{M}$
consisting of representations whose restrictions to
$\pi_1(\partial M)$ consist only of elliptic, parabolic, and trivial
elements.  Note that $\RGPE{M}$ is a real semialgebraic set.  Let
$\RGlift{M} \subset \RGPE{M}$ be the image of $\RGtilPE{M}$ under $P$.
By continuity of the Euler class (see Section~\ref{sec:euler}), the
subset $\RGlift{M}$ is a union of connected components of $\RGPE{M}$,
and hence also a real semialgebraic set. As described in
Section~\ref{sec:paramlifts}, the cohomology $H^1(M; \Z)$ acts freely
on $\RGtilPE{M}$ with quotient $\RGlift{M}$; consequently,
$P \maps \RGtilPE{M} \to \RGlift{M}$ is a (regular) covering map.
Because the action of $H^1(M; \Z)$ on $\RGtilPE{M}$ induces the action
of $T \leq \SymTEL{M}$ on $\TEL{M}$, the map $\trans \circ \inc^*$
below factors through $\psi$ as shown:
\[
\begin{tikzcd}
 \RGtilPE{M} \arrow{r}{\trans \circ \inc^*} \arrow{d}[left]{P} & \TELquo{M} \\ 
 \RGlift{M} \arrow[dashrightarrow]{ru}[below]{\psi} & 
\end{tikzcd}
\]
The map $\psi$ must be continuous as the vertical arrow $P$ is a
covering map.  As the set $\RGlift{M}$ has finitely many connected
components, it follows that
\[
\overline{\psi\left(\RGlift{M}\right)} = \TELquo{M}
\]
has finitely many components, proving the lemma.
\end{proof}

\subsection{Milnor-Wood bounds}  
\label{sec:milnor-wood} 

The remaining tool we need to prove Theorem~\ref{thm:structure} is:
\begin{lemma}\label{lem:compact}
  The space $\TELquo{M}$ is compact.
\end{lemma}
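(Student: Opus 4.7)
The plan is to reduce compactness of $\TELquo{M}$ to a uniform bound on the $\lambda^*$-coordinate of points in $\TEL{M}$, and to obtain that bound via a Milnor-Wood inequality applied to a Seifert-type surface in $M$.

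First, I would use that the translation subgroup $T\leq\SymTEL{M}$ acts on $H^1(\partial M;\R)\cong\R^2$ by horizontal shifts of width $k$; adjoining the order-two element of $\SymTEL{M}/T$ produced by $x\mapsto -x$, the full group $\SymTEL{M}$ acts cocompactly on every horizontal strip $\{|\lambda^*|\leq C\}$. Consequently, compactness of $\TELquo{M}$ will follow once I know $\TEL{M}$ is contained in such a strip. Since $\TEL{M}$ is defined as a closure and boundedness is preserved by closure, it suffices to bound $|\trans(\rhotil(\lambda))|$ uniformly for $\rhotil\in\RGtilPE{M}$.

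Next I would exploit the hypothesis that $M$ is a $\Q$-homology solid torus: since $\inc_*(\lambda)$ has finite order $k$ in $H_1(M;\Z)$, the class $k[\lambda]$ bounds in $M$, and standard arguments produce an essential properly embedded surface $F\subset M$ whose oriented boundary is a disjoint union of $n$ curves on $\partial M$, each parallel to $\lambda$, whose algebraic sum is $k[\lambda]$. Restricting $\rhotil$ to $\pi_1(F)$ and applying the homogeneous quasimorphism $\trans$ to the standard surface-group relation
\[
\prod_{j=1}^{g}[a_j,b_j]=\prod_{i=1}^{n}c_i
\]
then yields a Milnor-Wood inequality of the form $\bigl|\sum_i\trans(\rhotil(c_i))\bigr|\leq C(F)$, where $C(F)$ depends only on $\chi(F)$. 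This uses that $\trans$ vanishes on commutators in $\Gtil$ and has bounded defect, so the relation forces the boundary translation sum to be uniformly bounded. Since each $c_i$ is conjugate in $\pi_1(M)$ to $\lambda^{\pm1}$ and the signs sum to $k$, conjugation-invariance and homogeneity of $\trans$ collapse the left side to $|k\cdot\trans(\rhotil(\lambda))|$. This gives the desired bound $|\trans(\rhotil(\lambda))|\leq C(F)/k$, uniform in $\rhotil$.

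The main technical obstacle will be stating the Milnor-Wood bound cleanly in the $\Gtil$ setting for a surface with multiple boundary components, since $\trans$ is merely a quasimorphism rather than a homomorphism and one must carefully track defects in the presentation relation; this is standard but requires care. Once the height bound is in place, $\TEL{M}$ lies in a horizontal strip $\{|\lambda^*|\leq C(F)/k\}$, and intersecting with a vertical fundamental strip for $T$ gives a closed, bounded (hence compact) subset whose continuous image under the quotient map to $\TELquo{M}$ is all of $\TELquo{M}$, completing the proof.
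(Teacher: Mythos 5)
Your proposal is correct and follows essentially the same route as the paper: a Milnor--Wood bound on $\abstrans{\rhotil(\lambda)}$ coming from a surface bounding $k\lambda$, combined with cocompactness of the $\SymTEL{M}$-action on a horizontal strip. The only (harmless) difference is that you use an embedded Seifert-type surface with $n$ boundary components and the multi-boundary Milnor--Wood estimate $\left|\sum_i \trans(\rhotil(c_i))\right|\leq -\chi(F)$, whereas the paper sidesteps that bookkeeping by taking a \emph{mapped} surface with a single boundary component carrying $\lambda^k$ and applying its one-boundary Proposition~\ref{prop:milnor-wood} directly.
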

The proof of Lemma~\ref{lem:compact} hinges on knowing that $\TEL{M}$
is contained in a horizontal strip of bounded height; to show this, we
use the following result, which is closely related to the Milnor-Wood
inequality.

\begin{proposition}\label{prop:milnor-wood}
  Suppose $S$ is a compact orientable surface with one boundary
  component.  For all $\rhotil \maps \pi_1(S) \to \Gtil$ one has 
  \[
  \abstrans{\rhotil(\delta)}  \leq \max\left(-\chi(S), 0\right) \mtext{where
    $\delta$ is a generator of $\pi_1(\partial S)$.}
  \] 
\end{proposition}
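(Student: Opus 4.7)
The plan is to reduce the bound on $|\trans(\rhotil(\delta))|$ to the standard fact that the translation number $\trans\maps\Gtil\to\R$ is a homogeneous quasimorphism of defect $1$, meaning that $|\trans(xy)-\trans(x)-\trans(y)|\leq 1$ for all $x,y\in\Gtil$ and $\trans(g^n)=n\trans(g)$.

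First I would dispose of the trivial case. If $\chi(S)\geq 0$, then since $S$ is orientable with one boundary component, $S$ is a disk, so $\pi_1(S)$ is trivial, $\rhotil(\delta)=1$, and $\trans(\rhotil(\delta))=0$, matching the bound $\max(-\chi(S),0)=0$. From here on, assume $S$ has genus $g\geq 1$, so $\chi(S)=1-2g$ and the desired bound is $|\trans(\rhotil(\delta))|\leq 2g-1$. Fix the standard presentation $\pi_1(S)=\langle a_1,b_1,\ldots,a_g,b_g\rangle$ with the boundary word $\delta=[a_1,b_1]\cdots[a_g,b_g]$, and write $x_i=\rhotil(a_i)$, $y_i=\rhotil(b_i)$, $c_i=[x_i,y_i]\in\Gtil$.

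The main step is a uniform bound $|\trans(c_i)|\leq 1$ for each commutator. Here I would use that $\trans$, being a homogeneous quasimorphism, is invariant under conjugation: $\trans(xyx^{-1})=\trans(y)$ for all $x,y\in\Gtil$ (this follows from homogeneity and the quasimorphism inequality applied to $x^n yx^{-n}$ and letting $n\to\infty$). Hence
\[
|\trans([x_i,y_i])|=|\trans(x_iy_ix_i^{-1}\cdot y_i^{-1})-\trans(x_iy_ix_i^{-1})-\trans(y_i^{-1})|\leq 1,
\]
using homogeneity to get $\trans(x_iy_ix_i^{-1})+\trans(y_i^{-1})=\trans(y_i)-\trans(y_i)=0$.

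Finally, I would iterate the quasimorphism inequality $g-1$ times to combine the commutators: $|\trans(c_1c_2\cdots c_g)-\sum_{i=1}^g\trans(c_i)|\leq g-1$, so
\[
|\trans(\rhotil(\delta))|\leq\sum_{i=1}^g|\trans(c_i)|+(g-1)\leq g+(g-1)=2g-1=-\chi(S),
\]
completing the proof. The only genuine content is the defect-$1$ quasimorphism property of $\trans$ and its conjugation-invariance, both of which are recorded in Section~\ref{sec:transnum}; the rest is arithmetic. If anything is the ``hard part,'' it is remembering that a homogeneous quasimorphism automatically has the conjugation-invariance used in the commutator bound, but this is standard (see e.g.\ \cite[Prop.~2.65]{Calegari2009}).
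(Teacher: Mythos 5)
Your proof is correct and follows essentially the same route as the paper's: dispose of the genus-zero case, bound each commutator's translation number by $1$ using conjugation-invariance of the homogeneous quasimorphism $\trans$, and then iterate the defect-$1$ inequality over the product of $g$ commutators to get $g+(g-1)=-\chi(S)$. No gaps.
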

Before discussing Proposition~\ref{prop:milnor-wood}, let us derive
Lemma~\ref{lem:compact} from it.  

\begin{proof}[Proof of Lemma~\ref{lem:compact}]
Recall that $M$ is a $\Q$-homology solid torus and let $k$ be the
order of the homological longitude $\lambda \in \pi_1(\partial M)$ in
$H_1(M;\Z)$.  There is a proper map of an oriented surface
$f \maps S \to M$ where $S$ has one boundary component and where
$f_*(\delta) = \lambda^k$ in $\pi_1(M)$ for $\delta$ a generator of
$\pi_1(\partial S)$.  Because $\trans$ is a homomorphism on cyclic
subgroups of $\Gtil$, we have
\[
k \cdot  \abstrans{\rhotil(\lambda)}  = \abstrans{\rhotil(\lambda^k)} 
   = \abstrans{(\rhotil \circ f_*)(\delta)}
\] 
Applying Proposition~\ref{prop:milnor-wood} to $\rhotil \circ f_*$
bounds the rightmost term in the previous equation, giving
\[
\abstrans{\rhotil(\lambda)} \leq \frac{\max\left(-\chi(S), 0\right)}{k}
\]
In particular, in our usual $(\mu^*, \lambda^*)$-coordinates on
$H^1(\partial M; \R)$, the locus $\TEL{M}$ lies in a horizontal strip
whose height is bounded by something that only depends on topological
information about $M$.  Thus, since $\SymTEL{M}$ contains horizontal
translations of $\R^2$ by shifts in $k \Z$, the quotient $\TELquo{M}$
is compact.
\end{proof}

We now discuss Proposition~\ref{prop:milnor-wood} in detail.  Recall
that a real-valued function $\phi$ on a group $\Gamma$ is called a
\emph{quasimorphism} if there exists a number $D$ such that
\[
\abs{\phi(xy) - \phi(x) - \phi(y)} \leq D  
  \mtext{for all $x,y\in\Gamma$,}
\]
and that the infimum of all such $D$ is
called the \emph{defect} of $\phi$.  The standard references \cite[\S
5]{Ghys2001} and \cite[\S 2.3.3]{Calegari2009} contain proofs that
for any representation $\rhotil:\Gamma\to\Gtil$, the function given by
$\phi = \trans\circ\rho$ is a quasimorphism.  It is also well-known
that this quasimorphism has defect at most $1$, although it is harder
to extract this fact from the literature.  It is stated in
\cite[Proposition~3.7]{Thurston1997}, with a sketch of a proof that
uses a construction of a connection on a circle bundle over a surface
in terms of a harmonic measure on a foliation transverse to the
fibers.  It is also a consequence of the ``ab Theorem'' of
\cite[Theorem~3.9]{CalegariWalker2011}, which was conjectured and
almost proved by Jankins and Neumann \cite{JankinsNeumann1985}, the
proof having been completed by Naimi \cite{Naimi1994}.  The proof of
Calegari and Walker is simpler and effective (see also \cite{Mann}).
With these facts in hand, we turn to the proof of the proposition.

\begin{proof}[Proof of Proposition~\ref{prop:milnor-wood}]
Let $g$ be the genus of $S$. The case of $g = 0$ is immediate as then
$\rhotil$ must be trivial and so $\trans(\rhotil(\delta)) = 0$; thus
we will assume $g > 0$.  Choose standard generators 
$\alpha_1, \beta_1, \ldots, \alpha_g, \beta_g$ for $\pi_1(S)$
where 
\[
\delta = [\alpha_1, \beta_1]\cdots[\alpha_g, \beta_g].
\]
Because $\trans:\Gtil\to\R$ is a quasimorphism of defect at most
$1$, we have 
\[
|\trans(xy)| \le |\trans(x)| + |\trans(y)| + 1 \mtext{for all $x, y\in\Gtil$.}
\]
It follows by induction that
\[
|\trans(x_1\cdots x_n)| \le |\trans(x_1)| + \cdots +|\trans(x_n)| +
(n-1) \mtext{for all $x_1,\ldots, x_n\in\Gtil$.} 
\]
As $\trans$ is constant on
conjugacy classes and satisfies $\trans(x^{-1}) = -\trans(x)$, we 
have
\[
\abstrans{[x,y]} = \left|\trans\left([x,y]\right) 
    - \trans\left(xyx^{-1}\right) - \trans\left(y^{-1}\right)\right|
   \le 1 \mtext{for all $x, y \in \Gtil$.}
\]
Combining these properties, we have
\[
\abstrans{\delta} = \abstrans{[\rhotil(\alpha_1), \, 
\rhotil(\beta_1)]\cdots[\rhotil(\alpha_g), \, \rhotil(\beta_g)]} \le g + (g - 1) = -\chi(S)
\]
as required.
\end{proof}

\begin{proof}[Proof of Theorem~\ref{thm:structure}] 

Define $c \maps H^1(\partial M; \R) \to \PSLRcharvar{\partial M}$ by
sending $\phi \maps \pi_1(\partial M) \to \R$ to the character of
the elliptic representation $\rho$ given by 
\[
\rho(\mu) = \pm\twobytwomatrix{e^{2\pi i \phi(\mu)}}{0}{0}{e^{-2\pi i
    \phi(\mu)}}
\mtext{and}
\rho(\lambda) = \pm\twobytwomatrix{e^{2 \pi
    i\phi(\lambda)}}{0}{0}{e^{- 2\pi i\phi(\lambda)}}
\]
We may use the dual basis to $(\mu, \lambda)$ and the trace-squared
coordinates on $\PSLRcharvar{\partial M}$ to express the map $c$ in
coordinates as:
\[
c(x, y) = 4\left(\cos^2( 2 \pi x), \ \cos^2( 2 \pi y), \ \cos^2\left(2\pi(x + y)\right)\right)
\]
For integers $m$ and $n$ we have $c(x+m, y+n) = c(x, y)$, and also
$c(\pm x, \pm y) = c(x, y)$.  Thus the map $c$ is topologically an
orbifold covering map from $\R^2$ onto a pillowcase, i.e.~a Euclidean
orbifold with underlying manifold $S^2$ and four cone points of angle
$\pi$.  Moreover, the following commutes:
\[
\begin{tikzcd}
  \RGtilPE{M} \arrow{r}{\trans\circ\inc^*} \arrow{d}
    & H^1(\partial M; \R) \arrow{d}{c} \\
  \PSLRcharvar{M} \arrow{r}{\inc^*} & \PSLRcharvar{\partial M}
\end{tikzcd}
\]
Note that $c$ maps $\TELquo{M}$ into
$\overline{\inc^*\left(\XG{M}\right)}$.  Now by
Lemma~\ref{lem:1d-image}, the complex algebraic set
$\inc^*\left(X(M)\right) \subset X(\partial M)$ has complex dimension
at most 1; hence the real semialgebraic set
$\overline{\inc^*\left(\XG{M}\right)}$ has real dimension at most 1.
Moreover, the set $\overline{\inc^*\left(\XG{M}\right)}$ is compact
since the subset of $X(\partial M)$ corresponding to representations
that are parabolic, elliptic, or trivial is compact.  Hence by
Proposition~\ref{prop:real1}, the set
$\overline{\inc^*\left(\XG{M}\right)}$ is a finite graph.  Thus, its
preimage under $c$ is a locally finite graph with analytic edges that
is invariant under $\SymTEL{M}$ by Lemma~\ref{lem:telsym}.  As
$\TELquo{M}$ is compact by Lemma~\ref{lem:compact}, we can conclude
that it lives in some finite graph in $H^1(\partial M; \R)/\SymTEL{M}$
with analytic edges.  Now, since $\TELquo{M}$ has finitely many
connected components by Lemma~\ref{lem:finitecomp}, it follows that it
too must be a finite graph in $H^1(\partial M; \R)/\SymTEL{M}$ with
analytic edges.  This proves the hardest part of the theorem.

To see that there are only finitely many parabolic points, note that
these only occur at images of lattice points in $H^1(\partial M; \Z)$,
and there can only be finitely many such points in the compact set
$\TELquo{M}$.  Also, the space $\TELquo{M}$ is the closure in a finite
graph of a set with finitely many components, and thus there are only
finitely many ideal points.  Finally, consider the copy of $\R$ in
$\Gtil$ sitting above $\PSO_2 \leq G$.  As $\Honefree = \Honefreedef \cong \Z$, we
get a \1-parameter family of abelian representations
$\pi_1(M) \to \Gtil$ by sending the generator of $\Honefree$
to any chosen element of $\R$.  Since $\lambda$ is zero in
$\Honefree$ whereas $\mu$ is nonzero, we see that these abelian
representations give rise to the line $L_\lambda$ inside of $\TEL{M}$,
finishing the proof of the structure theorem. 
\end{proof}

\subsection{Constructing orderings}  We now turn to the proofs of
the lemmas that we use to construct orderings of \3-manifold groups.

\begin{proof}[Proof of Lemma~\ref{lemma:key}]
Let $\phi$ be such a point in $L_r \cap \TEL{M}$.  As it is neither
parabolic nor ideal, there is a $\rhotil \in \RGtil{M}$ which maps to
$\phi$ where the restriction of $\rhotil$ to $\pi_1(\partial M)$ is
either elliptic or central. Let $\gamma$ be an element of
$\pi_1(\partial M)$ realizing the slope $r$.  By the definition of
$L_r$, we have $\phi(\gamma) = (\trans \circ \rhotil)(\gamma) = 0$.
It follows from Lemma~\ref{lem:trans_on_elliptic} that $\gamma$ is in
the kernel of $\rhotil$, and hence we get an induced representation
$\rhobar \maps \pi_1\left( M(r) \right) \to \Gtil$.  As $\phi$ is not
the origin in $H^1(M;\R)$, the new representation $\rhobar$ is
nontrivial since some element of $\pi_1(\partial M)$ is mapped to an
element of $\Gtil$ with nonzero translation number. Thus we have found
a nontrivial homomorphism $\pi_1\left(M(r)\right) \to \Gtil$.
Regarding $\Gtil$ as subgroup of $\Homeo^+(\R)$ and using that $M(r)$
is irreducible, Theorem~1.1 of \cite{BoyerRolfsenWiest2005} applies to
promote this nontrivial homomorphism
$\pi_1\left( M(r)\right) \to \Homeo^+(\R)$ to a faithful one;
equivalently, the group $\pi_1\left( M(r) \right)$ is left-orderable
as claimed.
\end{proof}

\begin{figure}
  \begin{center}
    \newcommand{\yh}{1.32}
\newcommand{\telsubpath}{%
  \draw (0, 1) 
      .. controls (0.2, 0.95) and (0.43, 0.85) .. (0.43, 0.65) 
      .. controls (0.43, 0.47) and (0.2, 0.20) .. (0, 0);
}
\newcommand{\telpath}{%
  \telsubpath
  \begin{scope}[rotate=180]
    \telsubpath
  \end{scope}
}

\begin{tikzpicture}[scale=0.95, line cap=round, font=\small]
  \begin{scope}
    \draw[dashed, line width=1.5pt, color=blue!40] (0.3333334, \yh) -- (0.3333334, -1.8)
        node[below, color=black] {$\mu^* = \frac{1}{3}$}; 
    \begin{scope}[line width=1.5pt, color=axesgray]
      \draw[->] (0, -\yh) -- (0, 1.5) node[left, color=black] {$\lambda^*$};
      \draw[->] (-1.6, 0) -- (1.6, 0) node[right, color=black] {$\mu^*$}; 
      \draw[dashed] (1, \yh) -- (1, -\yh) node[below right, color=black] {$\mu^* = 1$}; 
      \draw[dashed] (-1, \yh) -- (-1, -\yh) node[below left, color=black] {$\mu^* = -1$}; 
    \end{scope}
    
    \begin{scope}[color=locus, line width=1.5pt]
      \foreach \x in {-1, 0, 1}{
        \begin{scope}[shift={(\x, 0)}]
          \telpath
        \end{scope}
      }
    \end{scope}

    \node at (0, -3) {$H^1(\partial M; \R)$};
  \end{scope}

  \begin{scope}[shift={(6.5, 0)}]
    
    \begin{scope}[line width=1.5pt, color=axesgray]
      \draw[->] (0, -\yh) -- (0, 1.5) node[left, color=black] {$\lambdatil^*$};
      \draw[->] (-3.5, 0) -- (3.9, 0) node[right, color=black] {$\mutil^*$}; 
      \foreach \x in {-1, 1}{
        \draw[dashed] (\x, \yh) -- (\x, -\yh) node[below, color=black] {$\mutil^* = \x$}; 
      }
      \foreach \x in {-3, -2, 2, 3}{
        \draw[dashed] (\x, \yh) -- (\x, -\yh);
      }
    \end{scope}

    \begin{scope}
      \clip (-3.5, -\yh) rectangle (3.5, \yh);

      \begin{scope}[color=locus, line width=1.5pt]
        \foreach \x in {-3, 0, 3}{
          \begin{scope}[shift={(\x, 0)}, xscale=3]
            \telpath
          \end{scope}
        }
      \end{scope}

      \begin{scope}[color=locus!30, line width=1.5pt]
        \foreach \x in {-5, -4, -2, -1, 1, 2, 4, 5}{
          \begin{scope}[shift={(\x, 0)}, xscale=3]
            \telpath
          \end{scope}
        }
      \end{scope}
    
    \end{scope} 

    \node at (0, -3) {$H^1(\partial \Mtil; \R)$};
  \end{scope} 

\end{tikzpicture}
  \end{center}
  \caption{This picture illustrates the proof of
    Lemma~\ref{lemma:branched} in a case where the covering map
    $\pi \maps \Mtil \to M$ has degree 3.  At left is $\TEL{M}$, where
    its intersections with the vertical axis are parabolic points and
    there are no ideal points.  Note that $\TEL{M}$ meets the
    vertical line $\mu^* = 1/3$ in two points. Its image under
    $\pi^* \maps H^1(\partial M; \R) \to H^1(\partial \Mtil; \R)$ is
    shown at right as the darker curves; the image is just a copy of
    $\TEL{M}$ stretched horizontally by a factor of 3.  In addition,
    $\TEL{\Mtil}$ contains the lighter curves shown, which are other
    translates of $\pi^*\left(\TEL{M}\right)$ under $\SymTEL{\Mtil}$.
    It is the lighter curves that contribute non-parabolic
    intersections of $\TEL{\Mtil}$ with the vertical axis $L_\mutil$,
    corresponding to the original intersections of $\TEL{M}$ with
    $\mu^* = 1/3$, and so allow us to order $\Ytil = \Mtil(\mutil)$
    via Lemma~\ref{lemma:key}.  }
  \label{fig:branched}
\end{figure}

\begin{proof}[Proof of Lemma~\ref{lemma:branched}]
Let $\pi \maps \Mtil \to M$ be the covering map corresponding to
$\Ytil \to Y$.  Restricting representations from $\pi_1(M)$ to
$\pi_1(\Mtil)$, we get a natural subset of $\TEL{\Mtil}$ from
$\TEL{M}$.  Specifically, the locus $\TEL{\Mtil}$ contains the image
of $\TEL{M}$ under
$\pi^* \maps H^1(\partial M; \R) \to H^1(\partial \Mtil; \R)$.  We use
$(\mutil, \lambdatil)$ as a basis for $H_1(\partial \Mtil; \Z)$, where
$\mutil$ maps to $n \mu$ in $H_1(\partial M;\Z)$ and $\lambdatil$ maps
to $\lambda$.  In the dual bases, we thus have that
$\pi^* \maps H^1(\partial M; \R) \to H^1(\partial \Mtil; \R)$ is given
by $\mu \mapsto n \mutil$ and $\lambda \mapsto \lambdatil$.  Hence
$\pi^*(\TEL{M})$ is basically $\TEL{M}$ stretched horizontally by
a factor of $n$.  If we act on $\pi^*(\TEL{M})$ by $\SymTEL{\Mtil}$,
we get additional copies of $\pi^*(\TEL{M})$ as shown in
Figure~\ref{fig:branched}.  (These additional translates still come
from representations $\pi_1(M) \to G$, but correspond to lifts
$\pi_1(\Mtil) \to \Gtil$ that do not extend to all of $\pi_1(M)$; the
point is that we can adjust a lift by any element in $H^1(\Mtil; \Z)$
and the image of $H^1(M; \Z)$ has index $n$.)  The key observation is
that as $\TEL{M}$ meets the line $\mu^* = 1/n$, the locus
$\TEL{\Mtil}$ meets the line $\mutil^* = 1$, and hence by the action of
$\SymTEL{\Mtil}$ the locus $\TEL{\Mtil}$ meets $L_\mu$ at a point
$t = (0, y)$.  The desired conclusion now follows from
Lemma~\ref{lemma:key} provided we can show that $t$ is neither ideal
nor parabolic.  The former is ruled out by the hypothesis that the
initial intersection of $\TEL{M}$ with $\mu^* = 1/n$ was not an ideal
point.  The latter is impossible since, when restricting a
representation $\Z^2 \to \Gtil$ to a finite index subgroup, the only
possible change of type (as defined in Section~\ref{sec:repsZZ}) is
from elliptic to trivial, and the initial intersection of $\TEL{M}$
with $\mu^* = 1/n$ is not parabolic as it is not in the lattice
$H^1(M; \Z)$.  Thus we can apply Lemma~\ref{lemma:key} to order
$\Ytil$ as required.
\end{proof}

\subsection{Ideal points}  

The following result was used in Section~\ref{sec:menagerie}, but is not
central to this paper and the proof can be safely skipped. 

\begin{lemma}
  \label{lem:smallnoideal}
  Suppose $M$ is a $\Q$-homology solid torus which is small, that is,
  contains no closed essential surfaces.  Then $\TEL{M}$ has no ideal
  points. 
\end{lemma}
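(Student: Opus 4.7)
The plan is to argue by contradiction. Suppose $\phi \in \TEL{M}$ is an ideal point, and choose a sequence $\rhotil_n \in \RGtilPE{M}$ with $\trans(\rhotil_n \circ \inc) \to \phi$. Project to $\GC$-characters by setting $\chi_n = [p \circ \rhotil_n] \in \XG{M} \subseteq X(M)$. There are two cases according to whether $\{\chi_n\}$ is bounded or unbounded in $X(M)$, and each must be ruled out.

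First I would dispose of the case where, after passing to a subsequence, $\chi_n \to \chi_\infty$ in $X(M)$. By Lemma~\ref{lem:closed} we have $\chi_\infty \in \XG{M}$, so there is a $\rho_\infty \in \RG{M}$ representing $\chi_\infty$. Lemma~\ref{lem:pathlift} (together with a direct diagonal argument in the reducible case) lets us replace the $\rho_n$ by $\PGL{2}{\R}$-conjugates converging to $\rho_\infty$, and then use continuity of the Euler class to lift consistently to $\Gtil$-representations $\rhotil_n'$ that converge, after correction by elements of $H^1(M;\Z)$, to some $\rhotil_\infty \in \RGtil{M}$. Continuity of $\trans$ then gives $\trans(\rhotil_\infty \circ \inc) = \phi$ with $\rhotil_\infty \in \RGtilPE{M}$, contradicting that $\phi$ is ideal rather than attained.

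So $\{\chi_n\}$ is unbounded. Passing to a subsequence, I may assume the $\chi_n$ all lie on a single irreducible component $C \subseteq X(M)$ and diverge to an ideal point of its smooth projective completion (passing to an algebraic curve through the sequence if $\dim_\C C > 1$). Here the plan is to invoke Culler-Shalen theory: such an ideal point furnishes a nontrivial action of $\pi_1(M)$ on a simplicial tree $T$ with no global fixed vertex, and a subgroup fixes a vertex iff every trace-squared function on that subgroup has non-negative valuation. Because $\rhotil_n \in \RGtilPE{M}$, for every $\gamma \in \pi_1(\partial M)$ the element $p \circ \rhotil_n(\gamma)$ is elliptic, parabolic, or trivial, so $\tr^2_\gamma(\chi_n) \in [0,4]$ is bounded along the sequence; consequently $\pi_1(\partial M)$ fixes a vertex of $T$.

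The standard Culler-Shalen dual-surface construction then produces an essential surface $S \subset M$ from a $\pi_1(M)$-equivariant map from the universal cover of $M$ to $T$. Since $\pi_1(\partial M)$ fixes a vertex, the map may be arranged to send $\partial M$ entirely to that vertex, so $S$ can be taken disjoint from $\partial M$; that is, $S$ is closed and essential. This contradicts the hypothesis that $M$ is small and completes the proof. The main obstacles will be the bookkeeping for lifts through $p \maps \Gtil \to G$ in the bounded case (particularly at reducible limits), and ensuring the Culler-Shalen machinery applies to the component $C$ even when $\dim_\C C > 1$; the latter is handled either by cutting $C$ down to a curve through the $\chi_n$ or by appealing to the Morgan-Shalen extension to higher-dimensional character varieties.
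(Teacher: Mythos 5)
Your proposal is correct and follows essentially the same route as the paper: the same dichotomy between a convergent subsequence of characters (ruled out by closedness of $\XG{M}$, path/covering-space lifting to $\RGtilPE{M}$, and continuity of $\trans$) and a divergent one (ruled out by the Culler--Shalen argument, using that $\tr^2_\gamma \in [0,4]$ on $\pi_1(\partial M)$ forces the dual surface to be closed, contradicting smallness). The only refinement worth noting is that the paper sidesteps your worry about components of dimension greater than one by using the smallness hypothesis a second time: by \cite[\S 2.4]{CCGLS} every relevant component of $X(M)$ is already a curve, so no reduction to a curve through the sequence (nor any appeal to Morgan--Shalen) is needed.
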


\begin{proof}
Suppose $t_0$ is an ideal point of $\TEL{M}$.  Pick a sequence
$\rhotil_i \in \RGtilPE{M}$ whose images in $\TEL{M}$ converge to $t$.
Consider the representations $\rho_i = p \circ \rhotil_i$ in $\RG{M}$
and the corresponding characters $[\rho_i]$ in $\XG{M}$. Passing to a
subsequence, we arrange that the $[\rho_i]$ lie in a single
irreducible component $X'$ of $X(M)$.  As $M$ is small, the variety
$X'$ must be a complex affine curve by \cite[\S 2.4]{CCGLS}.  As
$\XG{M}$ is closed in $X(M)$ by Lemma~\ref{lem:closed}, we have that
$X'_G = X' \cap \XG{M}$ is closed in $X'$.  Passing to a subsequence,
either the $[\rho_i]$ limit to a character in $\XG{M}$ or the
$[\rho_i]$ march off to infinity in the noncompact curve $X'$.  In the
latter case, since we have
$\big\{ \tr^2_\gamma \rho_i \big\} \in [0, 4]$ for all
$\gamma \in \pi_1(\partial M)$, the argument of \cite[\S 2.4]{CCGLS}
produces a closed essential surface associated to a certain ideal
point of $X'$, contradicting our hypothesis that $M$ is small.

Now consider the case when the $[\rho_i]$ limit to $\chi$ in $\XG{M}$.
By Proposition~\ref{prop:real1}, we pass to a subsequence where there
is an arc $\cbar$ in $\XG{M}$ starting at $[\rho_0]$, ending at $\chi$
and containing all the $[\rho_i]$.  Using Lemma~\ref{lem:pathlift},
lift $\cbar$ to a path $c$ in $\RG{M}$ starting at $\rho_0$ and ending
at some $\rho$ whose character is $\chi$.  In the notation of the
proof of Lemma~\ref{lem:finitecomp}, we have that the $\rho_i$ are in
$\RGPE{M}$.  Note that $\rho$ is also in $\RGPE{M}$ as it is in
$\RG{M}$ and $\tr^2_\gamma \rho$ must be in $[0, 4]$ by continuity for
all $\gamma \in \pi_1(\partial M)$.  As in the proof of
Lemma~\ref{lem:finitecomp}, we have that $c$ is in $\RGlift{M}$ and so
we can lift $c$ to a path $\ctil$ in $\RGtilPE{M}$ starting at
$\rhotil_0$.  After possibly changing $\ctil$ by a deck transformation
of $\RGtilPE{M} \to \RGlift{M}$, we can assume that the image of
$\ctil(1)$ in $\TEL{M}$ is exactly $t_0$.  Thus $t_0$ is not actually
an ideal point, proving the lemma.
\end{proof}

\section{Alexander polynomials and orderability}
\label{sec:alexordering}

In this section we prove our first main result,
Theorem~\ref{MainTheoremOneOne}, which implies
Theorem~\ref{MainTheoremOne} from the introduction.  To state the more
general result, we need a pair of definitions.  First, we say a
compact \3-manifold $Y$ has \emph{few characters} if each positive
dimensional component of $X(Y)$ consists entirely of characters of
reducible representations.  An irreducible $\Q$-homology solid torus
$M$ is called \emph{longitudinally rigid} when its Dehn filling along
the homological longitude $M(0)$ has few characters.  Here is the
statement of Theorem~\ref{MainTheoremOneOne}, where the manifold $M$
has a fixed homologically natural framing $(\mu, \lambda)$.

\begin{theorem}
  \label{MainTheoremOneOne}
  Suppose that $M$ is a longitudinally rigid irreducible $\Q$-homology
  solid torus and that the Alexander polynomial of $M$ has a simple root
  $\xi$ on the unit circle.  When $M$ is not a $\Z$-homology solid torus,
  further suppose that $\xi^k \neq 1$ where $k > 0$ is the order of
  the homological longitude $\lambda$ in $H_1(M; \Z)$.  Then there
  exists $a>0$ such that for every rational $r\in (-a,0) \cup (0, a)$
  the Dehn filling $M(r)$ is orderable.
\end{theorem}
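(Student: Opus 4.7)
The plan is to apply Lemma~\ref{lemma:key} after producing, in a neighborhood of an Alexander point, a short analytic arc of $\TEL{M}$ which crosses the horizontal axis $L_\lambda$ transversely. Each simple root $\xi$ of $\Delta_M$ on the unit circle gives rise, after conjugation, to a reducible representation $\rho_\xi \maps \pi_1(M)\to \G$ factoring through $\Honefree\cong\Z$ which sends a generator to an elliptic rotation of angle $\arg(\xi)$; since $\inc_*(\mu)$ equals $k$ times this generator in $\Honefree$, the character $[\rho_\xi]$ lifts to the Alexander point $P_\xi = \bigl(k\arg(\xi)/(2\pi),\,0\bigr)\in\TEL{M}$. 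The hypothesis $\xi^k\neq 1$ (automatic when $k=1$ since $\Delta_M(1)=\pm 1$ for knots in $\Z$-homology spheres) guarantees that $P_\xi$ is not an integer lattice point, and in particular not parabolic.

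Because $\xi$ is a \emph{simple} root, \cite{HeusenerPorti2005} produces a $1$-dimensional irreducible component $Y\subset X(M)$ through $[\rho_\xi]$ whose generic point is an irreducible character, with $Y$ meeting the reducible stratum transversely. Since $\xi$ lies on the unit circle, inspection of the relevant tangent cocycle in $H^1(\pi_1(M);\mathfrak{sl}_2(\C))$ shows that it lies in the real subspace, so $Y\cap\XG{M}$ contains a real analytic arc through $[\rho_\xi]$. Lifting continuously to $\RGtilPE{M}$ and composing with $\trans\circ\inc^*$ yields an analytic arc $\alpha\maps[0,\epsilon)\to\TEL{M}$ with $\alpha(0)=P_\xi$ whose values for $t>0$ come from irreducible characters. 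Longitudinal rigidity of $M$ forces $\alpha$ to leave $L_\lambda$: otherwise, Lemma~\ref{lem:trans_on_elliptic} would give $\rhotil_t(\lambda)=1$ for every $t$, so each $\rho_t$ would factor through $\pi_1(M(0))$, producing a positive-dimensional component of irreducible characters in $X(M(0))$ and contradicting the hypothesis. A closer inspection of the Heusener-Porti deformation at a simple root further shows that the $\lambda^*$-coordinate of $\alpha(t)$ grows linearly in $t$, so $\alpha$ in fact crosses $L_\lambda$ transversely at $P_\xi$.

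By Theorem~\ref{thm:structure}, the ideal and parabolic points of $\TEL{M}$ are discrete, so after shrinking $\epsilon$ we may assume $\alpha\bigl((0,\epsilon)\bigr)$ lies in a small neighborhood of $P_\xi$ bounded away from the origin and meets no ideal or parabolic points. By the transversality established above, for each sufficiently small nonzero rational $r$ the line $L_r$ of slope $-r$ through the origin meets $\alpha$ near $P_\xi$ at a nonzero, non-parabolic, non-ideal point of $\TEL{M}$. Choosing $a > 0$ small enough that additionally every $M(r)$ with rational $r\in(-a,0)\cup(0,a)$ is irreducible (possible because $M$ has only finitely many reducible filling slopes), Lemma~\ref{lemma:key} delivers orderability of each such $M(r)$. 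The subtlest step is obtaining the arc $\alpha$ inside $\XG{M}$ rather than only inside $X_K(M)$: as the locus of $o9_{30426}$ in Figure~\ref{fig:mult} shows, at a multiple root the Heusener-Porti deformation can take values only in $\PSU_2$ and contribute no arc of $\TEL{M}$, so the simplicity of $\xi$ together with its unit-circle location is exactly the input needed to force a real $\G$-valued deformation to exist and to be transverse to $L_\lambda$.
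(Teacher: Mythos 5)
Your overall strategy coincides with the paper's: deform the abelian representation at the Alexander point $P_\xi$ via Heusener--Porti, use longitudinal rigidity to show the resulting arc of $\TEL{M}$ is not contained in the horizontal axis, and then invoke Lemma~\ref{lemma:key}. However, there is a genuine gap at the step where you claim the arc \emph{crosses} $L_\lambda$ transversely at $P_\xi$. The Heusener--Porti deformation is a one-sided path $\rho_t$ with $t\in[0,\epsilon)$, so even if the $\lambda^*$-coordinate of $\alpha(t)$ were linear in $t$ (an assertion you do not justify, and which the paper never establishes or needs), the image arc would depart $L_\lambda$ to only \emph{one} side of the axis. Since $P_\xi=(x_0,0)$ with $x_0>0$ and $L_r$ is the line of slope $-r$ through the origin, an arc leaving $P_\xi$ into, say, the upper half-plane meets $L_r$ near $P_\xi$ only for $r$ of one sign; your argument therefore yields orderability only on $(-a,0)$ or only on $(0,a)$, not on the full punctured interval. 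Nothing you say rules out that the arc is tangent to the axis or lies entirely on one side of it---tangential departure at Alexander points is a real phenomenon, visible in Figure~\ref{fig:mult}. The paper closes this gap not by transversality but by symmetry: the $\pi$-rotation in $\SymTEL{M}$ carries the arc $A$ to a second arc $B$ emanating from the axis on the opposite side of $L_\lambda$, and the two arcs together meet every line in a cone $\cC$ about the horizontal axis (Figure~\ref{fig:arcstocone}). You need this symmetry step, or some substitute for it, to obtain both halves of the interval.

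A secondary issue: you use $\xi^k\neq 1$ only to conclude that $P_\xi$ is not a lattice point, but the hypothesis is also needed to guarantee $\tr^2_\mu(\rho_0)=\xi^k+2+\xi^{-k}<4$, so that $\rho_t(\mu)$ remains elliptic for small $t>0$ and the deformed representations actually belong to $\RGtilPE{M}$ and hence contribute points to $\TEL{M}$ at all. Your phrase ``lifting continuously to $\RGtilPE{M}$'' silently assumes this peripheral ellipticity, which is exactly what the trace estimate provides; relatedly, the existence of the lift itself rests on the constancy of the Euler class on components of $\RG{M}$ together with the fact that $\rho_0$ factors through $\Z$, which deserves at least a sentence.
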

Steven Boyer told us in a private communication that there is an
analog of Theorem~\ref{MainTheoremOne} when the simple root $\xi$ is
on the positive real axis.  Here is the argument that this
implies Theorem~\ref{MainTheoremOne}.

\begin{proof}[Proof of Theorem~\ref{MainTheoremOne}]
Comparing the statements, there are two things to do: show that $M$
being lean implies that $M$ is longitudinally rigid, and establish
that $M(0)$ is orderable.  The latter is immediate from Theorem 1.1 of
\cite{BoyerRolfsenWiest2005} since $H^1(M(0); \Z) \cong \Z$ and $M(0)$
is either irreducible or $S^2 \times S^1$.  The former is an immediate
consequence of
\begin{claim} \label{claim:fibertofew}
  Suppose $Y$ is an irreducible closed \3-manifold.  If
  the only essential surfaces in $Y$ are fibers in fibrations over the
  circle, then $Y$ has few characters.
\end{claim}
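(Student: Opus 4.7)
The plan is to argue by contradiction using the Culler--Shalen theory of ideal points. Assume that some positive-dimensional irreducible component $X'\subset X(Y)$ contains an irreducible character $\chi_0$. Choose an irreducible algebraic curve $C\subset X'$ through $\chi_0$ and a smooth projective completion $\overline{C}$, then pick an ideal point $\xi\in\overline{C}\setminus C$. The Culler--Shalen construction at $\xi$ produces a nontrivial action of $\pi_1(Y)$ on a simplicial tree $T$ with no global fixed point, and dually an essential surface $F\subset Y$ whose fundamental group stabilizes an edge of $T$.

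By hypothesis, $F$ must be a fiber of a fibration $Y\to S^1$, so $\pi_1(Y)\cong\pi_1(F)\rtimes_\phi\langle t\rangle$ with $\pi_1(F)$ normal. The key structural observation is that because $\pi_1(F)$ is normal and stabilizes an edge of $T$, its fixed-point set in $T$ is a nonempty $\pi_1(Y)$-invariant subtree, hence equals all of $T$. Therefore the $\pi_1(Y)$-action factors through $\pi_1(Y)/\pi_1(F)\cong\Z$; being nontrivial with no global fixed point, it is translation on a line. Consequently, as one approaches $\xi$ along $C$, the squared-trace functions $\tr^2_\gamma$ stay bounded for every $\gamma\in\pi_1(F)$ while $\tr^2_t\to\infty$, so the restriction map $r\colon X(Y)\to X(F)$ extends to a regular map $\overline{C}\to X(F)$.

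Now I would case-analyze the extended $r|_{\overline{C}}$. If $r|_C$ is constant at an irreducible character $\chi_F$, then after choosing a lift $\rho_F$ the twist relation $\rho(t)\rho_F(\gamma)\rho(t)^{-1}=\rho_F(\phi_*(\gamma))$ together with triviality of the centralizer of any irreducible $\PSL{2}{\C}$-representation pins $\rho(t)$ down to a single element, so $C$ would consist of a single character---contradicting that $C$ is a curve. Hence $r(C)$ consists of reducible characters of $\pi_1(F)$, and a representative $\rho\in C$ sends $\pi_1(F)$ into the stabilizer of either a single point $p\in\projsp^1(\C)$ or an unordered pair $\{p_1,p_2\}$. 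In the single-point case, normality of $\pi_1(F)$ forces $\rho(\pi_1(Y))$ to fix $p$: for any $h\in\pi_1(Y)$ the point $\rho(h)(p)$ is fixed by $\rho(h\pi_1(F)h^{-1})=\rho(\pi_1(F))$, and uniqueness gives $\rho(h)(p)=p$; so $\rho$ is reducible, contradicting irreducibility of $\chi_0$.

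The main obstacle is the remaining subcase: $\rho$ preserves an unordered pair $\{p_1,p_2\}$ with some $\rho(h)$ swapping them, producing a ``dihedral-type'' representation that is reducible on $\pi_1(F)$ but irreducible on $\pi_1(Y)$. To rule this out I would show that a positive-dimensional family of such characters requires an abelian character $\psi\colon H_1(F)\to\C^\times$ satisfying $\psi\circ\phi_*=\psi^{-1}$, forcing $-1$ to be an eigenvalue of $\phi_*$ on $H_1(F;\C)$, and then extract an essential surface in $Y$ distinct from any fiber---via the $\Z/2$-cover $\widetilde Y\to Y$ dual to the induced quotient $\pi_1(Y)\twoheadrightarrow\Z/2$, applied to a class in $H_2(\widetilde Y;\Z)$ not coming from the fibration---contradicting the hypothesis. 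The nonconstant-$r|_C$ case is handled by a parallel argument, using that the image $r(C)$ must lie in the fixed locus of the induced action of $\phi_*$ on $X(F)$.
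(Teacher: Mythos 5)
Your overall strategy matches the paper's---contradiction via an ideal point of a curve $X_0$ of irreducible characters, identification of the dual surface with the fiber $F$, and a case analysis on the restriction to $\pi_1(F)$---but two steps are genuinely incomplete. First, you never dispose of the case where $r|_C$ is non-constant; your closing remark that the image of $C$ ``must lie in the fixed locus of the induced action of $\phi_*$ on $X(F)$'' is true but yields no contradiction, since that fixed locus can easily be positive-dimensional. The correct move is to show this case cannot occur at all: because \emph{every} essential surface in $Y$ is a fiber, Thurston's argument gives $b_1(Y)=1$ and that $F$ is the unique connected essential surface up to isotopy, so the surface dual to \emph{every} ideal point of $X_0$ is a union of parallel copies of $F$; hence each $\tr^2_\gamma$ with $\gamma\in\pi_1(F)$ is finite at every ideal point and therefore constant on $X_0$. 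You only establish boundedness at the single ideal point $\xi$, which is why the non-constant case survives in your write-up. (Relatedly, your claim that the fixed subtree of $\pi_1(F)$ equals all of $T$ needs minimality of the action, and the ``translation on a line'' conclusion is not actually used; all you need is that $\pi_1(F)$ has a fixed point, so that its trace functions have no pole at $\xi$.)

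Second, the ``dihedral'' subcase---$\psi|_{\pi_1(F)}$ reducible, preserving a pair of points of $\Pone$ that $\psi(\tau)$ swaps---is left as a plan rather than an argument, and the plan (forcing $\psi\circ\phi_*=\psi^{-1}$ and extracting a non-fiber essential surface from a $\Z/2$-cover) is speculative: it is not clear such a cover produces an essential surface in $Y$ itself violating the hypothesis. The paper instead closes this case by rigidity: after conjugating so the fixed pair is $\{0,\infty\}$, one can normalize $\psi(\tau)=\pm\mysmallmatrix{0}{1}{-1}{0}$ by a diagonal conjugation that does not change $\psi|_{\pi_1(F)}$, so $\psi$ is determined up to conjugacy by $\psi|_{\pi_1(F)}$, which (being diagonal) is determined by its character. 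Hence $X_0$ contains a unique irreducible character, contradicting the fact that the irreducible characters form a Zariski-open subset of $X_0$. A small further slip: in the constant-irreducible case the stabilizer of an irreducible $\GC$-representation under conjugation is only finite, not necessarily trivial, but finiteness still makes $X_0$ finite and gives the contradiction.
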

Here is the proof of the claim. Suppose instead that $X(Y)$ has a
positive dimensional component $Z$ containing an irreducible
character $\chi_0$.  Recall from Section~\ref{sec:repcharvar}
that the functions $\tr_\alpha^2$ for a finite set of
$\alpha\in\pi_1(Y)$ give coordinates on the complex affine algebraic
set $X(Y)$.  Pick an irreducible curve $X_0 \subset Z$ that contains
$\chi_0$, which we can do by e.g.~Corollary 1.9 of
\cite{CharlesPoonen2016}. As affine algebraic curves over $\C$ are
noncompact, there is at least one ideal point of $X_0$ in the sense of
\cite[\S 4]{BoyerZhang1998}.  This gives an action of $\pi_1(Y)$ on a
simplicial tree, which in turn has an essential dual surface.  Let $F$
be a connected component of this dual surface.  By hypothesis, the
surface $F$ must be a fiber in a fibration of $Y$ over the
circle. In fact, since \emph{every} essential surface in $Y$ is a
fiber, it follows from \cite[Pages~113--115]{Thurston1986} that
$b_1(Y) = 1$ and that $F$ is the unique connected essential surface
in $Y$ up to isotopy.  Therefore, the surface associated to any other
ideal point of $X_0$ must also consist of parallel copies of $F$.
Hence for every $\gamma \in \pi_1(F)$, the function $\tr^2_\gamma$
takes a finite value at every ideal point of the curve $X_0$, which
forces the function $\tr^2_\gamma$ to actually be constant on $X_0$.
Thus every character in $X_0$ has the same restriction to
$\pi_1(F)$, which we denote by $\eta \in X(F)$.  There are two cases
depending on whether or not $\eta$ is reducible. 

Suppose $\eta$ is irreducible.  As per Section~\ref{sec:repcharvar},
all representations $\pi_1(F) \to \GC$ with character $\eta$ are
irreducible and conjugate, and let us fix one such representation
$\rho$.  If $f \maps F \to F$ is the monodromy of the fibration, we
have the usual presentation
\[
  \pi_1(M) = \spandef{\tau, \pi_1(F)}{
    \mbox{$ \tau \gamma \tau^{-1} = f_*(\gamma)$ for all
      $\gamma \in \pi_1(F)$}}
\]
Thus, a representation $\rhohat:\pi_1(M)\to\GC$ that restricts to $\rho$ on
$\pi_1(F)$ is determined by the element $T = \rhohat(\tau) \in \GC$; moreover,
$T$ must conjugate $\rho$ to $\rho \circ f_*$.  As $\rho$
is irreducible, its stabilizer under conjugation is finite
\cite[Proposition~3.16(i)]{HeusenerPorti2004}, and hence there are
only finitely many possibilities for $T$.  But then $X_0$ is finite, a
contradiction.

Suppose instead that $\eta$ is reducible. Let
$\psi \maps \pi_1(M) \to \GC$ be an irreducible representation with
character in $X_0$.  Note that $\psiF$ is nontrivial as otherwise
$\psi$ factors through $\pi_1(M)/\pi_1(F) \cong \Z$ making $\psi$
itself reducible.  As $\psiF$ has character $\eta$, it is reducible
and has either exactly one or exactly two fixed points on $\Pone$.  If
$\psiF$ had a unique fixed point $p_0 \in \Pone$, then, since
$\pi_1(F)$ is normal, it follows that $\psi$ itself fixes $p_0$,
making $\psi$ reducible.  So $\psiF$ has exactly two fixed points on
$\Pone$, and we conjugate $\psi$ so that these are $0 = [0:1]$ and
$\infty = [1:0]$.  After this conjugation, the image of $\psiF$
consists of diagonal matrices and its non-trivial elements are
hyperbolic or elliptic with axis the geodesic $L$ in $\H^3$ that joins
$0$ to $\infty$.  Now consider how $\psi(\tau)$ acts on the points $0$
and $\infty$ . It must not fix them individually, as then $\psi$ would
be reducible. Hence $\psi(\tau)$ is an elliptic element of order two
whose axis is orthogonal to $L$.  We can conjugate $\psi$ by a
diagonal matrix, which does not change $\psiF$, so that
$\psi(\tau) = \pm \mysmallmatrix{0}{1}{-1}{0}$.  In particular, up to
conjugacy, $\psi$ is completely determined by $\psiF$.  As a diagonal
representation such as $\psiF$ is determined up to conjugacy by its
character, we have shown that $X_0$ contains a unique irreducible
character.  But this contradicts the fact that the irreducible
characters in $X_0$ are Zariski open
\cite[Corollary~3.6]{HeusenerPorti2004}.  This completes the proof of
Claim~\ref{claim:fibertofew} and shows that
Theorem~\ref{MainTheoremOne} follows from
Theorem~\ref{MainTheoremOneOne}.
\end{proof}

We now sketch the proof of Theorem~\ref{MainTheoremOneOne}, which we
also illustrate in Figure~\ref{fig:arcstocone}.  Recall that to order
the Dehn filling $M(r)$ by applying Lemma~\ref{lemma:key}, we need an
intersection of the translation locus $\TEL{M}$ with the line $L_r$,
which is the line through the origin of slope $-r$.  So to prove the
theorem, we construct a cone $\cC$ of lines through the origin that
contains the horizontal axis $L_0$ and where every line in $\cC$ meets
$\TEL{M}$.  To this end, we use a result of Heusener and Porti
\cite{HeusenerPorti2005} to build an arc $A$ in $\TEL{M}$ which
starts at a point in $L_0$ but is otherwise disjoint from it.  The
symmetries of $\TEL{M}$ guarantee that if we have such an arc on one
side $L_0$ then we will have one on the other side as well, giving us
a big enough chunk of $\TEL{M}$ to have the desired cone $\cC$; see
Figure~\ref{fig:arcstocone} for more.  A key technical point is that
we must take care to ensure that the arc $A$ is not completely
contained in $L_0$, and this is where the hypothesis of longitudinally
rigid comes in.

A key component of the proof is the following result derived from
\cite{HeusenerPorti2005}.  

\begin{lemma}
  \label{lem:abeliandeforms}
  Suppose $M$ is an irreducible $\Q$-homology solid torus. If $\xi$ is a
  simple root of the Alexander polynomial that lies on the unit
  circle, then there exists an analytic path
  $\rho_t \maps [0, 1] \to \RG{M}$ where:
  \begin{enumerate}
  \item \label{item:character}
    The representation $\rho_0$ acts by rotations about a unique
    fixed point in $\H^2$, and factors through
    $H = \Honefree = \Honefreedef \cong \Z$.  A
    generator of $H$ acts via rotation by angle $\arg(\xi)$.

  \item \label{item:irreducible}
    The representations $\rho_t$ are irreducible over $\GC$ for
    $t > 0$.

  \item \label{item:charvaries}
    The corresponding path $[\rho_t]$ of characters in $\XG{M}$ is also
    a nonconstant analytic path.  
    
  \item \label{item:tracevaries} There exists
    $\gamma \in \pi_1(\partial M)$ where
    $\tr^2_\gamma\left(\rho_t\right)$ is nonconstant in $t$.
  \end{enumerate}
\end{lemma}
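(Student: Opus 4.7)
The plan is to apply the deformation theorem of Heusener and Porti \cite{HeusenerPorti2005}, which detects nonabelian representations from simple roots of the Alexander polynomial, and then to exploit the real structure on the character variety coming from the hypothesis $|\xi| = 1$.

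First I would construct $\rho_0$ explicitly. Since $H = \Honefree \cong \Z$, fix a generator $h$ and identify $\PSO_2 \leq G$ with the rotations about a chosen basepoint $x_0 \in \H^2$. Set $\rho_0$ to be the composition $\pi_1(M) \twoheadrightarrow H \to \PSO_2 \subset G$ that sends $h$ to the rotation by $\arg(\xi)$. This representation is abelian, lies in $\RG{M}$, and has $x_0$ as its unique fixed point in $\H^2$, so condition (\ref{item:character}) holds by construction. At the level of $\GC$, $\rho_0$ is conjugate to the diagonal representation sending $h$ to $\pm\,\mathrm{diag}(\xi^{1/2}, \xi^{-1/2})$, which is the reducible character to which the Heusener--Porti machinery applies.

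Next I would appeal to Heusener--Porti: because $\xi$ is a simple root of $\Delta_M$, there is an analytic germ of a smooth curve $C \subset \SLcharvar{M}$ through $[\rho_0]$ whose general point is irreducible over $\GC$, with tangent direction at $[\rho_0]$ an explicit cohomology class $u \in H^1(\pi_1(M); \slrhoplus)$ built from $\xi$. The hypothesis $|\xi|=1$ means $[\rho_0]$ is fixed by the complex conjugation involution on $\SLcharvar{M}$, and the cocycle construction of \cite{HeusenerPorti2005} produces a real tangent class in this situation; a local sign analysis of the real structure distinguishes the $G$-real form from the $K$-real form and places $C$ inside $\XG{M}$ rather than $X_K(M)$. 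Lemma~\ref{lem:pathlift} then lifts the analytic arc of characters to an analytic path $\rho_t \maps [0,1] \to \RG{M}$ starting at $\rho_0$ with $\rho_t$ irreducible for $t > 0$, establishing (\ref{item:irreducible}).

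Finally I would verify (\ref{item:charvaries}) and (\ref{item:tracevaries}). Property (\ref{item:charvaries}) is immediate: the arc $C$ is nonconstant analytic, and its general point is irreducible, so $[\rho_t]$ cannot coincide with the reducible $[\rho_0]$ for $t > 0$. For (\ref{item:tracevaries}), I would use the Heusener--Porti description of the tangent cocycle $u$, which restricts nontrivially to $\pi_1(\partial M)$; geometrically, the deformation changes the rotation angle of the peripheral elliptic image to first order. Consequently, for a suitable $\gamma \in \pi_1(\partial M)$, the analytic function $t \mapsto \tr^2_\gamma(\rho_t)$ has nonzero derivative at $t=0$ and is nonconstant.

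The main obstacle is the realness step: showing that the Heusener--Porti deformation, which exists a priori only in $R(M) = \Hom(\pi_1(M), \GC)$, can be chosen to lie inside $\RG{M}$ rather than in $R_K(M)$ or in nonreal representations. This is precisely where $|\xi| = 1$ (as opposed to $\xi$ merely real) plays its essential role, and pinning it down requires a careful local analysis of the real structure on the Zariski tangent space of $\SLcharvar{M}$ at the reducible character $[\rho_0]$.
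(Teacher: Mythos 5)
Your overall strategy for parts (\ref{item:character})--(\ref{item:charvaries}) matches the paper's: construct the diagonal/rotation representation $\rho_\alpha$ attached to $\xi$ and invoke Proposition~10.3 of \cite{HeusenerPorti2005}. But the step you yourself flag as the main obstacle --- getting the deformation into $\RG{M}$ rather than $R_K(M)$ --- is not actually carried out, and the mechanism you propose (``a local sign analysis of the real structure distinguishes the $G$-real form from the $K$-real form'') cannot work as stated. At a reducible character with $|\xi|=1$ the two real forms are not mutually exclusive alternatives: $[\rho_\alpha]$ lies in $X_K(M)\cap\XG{M}=X_S(M)$, and nearby real points of the irreducible component can lie in either real form, or both (for simple unit-circle roots one in fact expects deformations into \emph{both} $\PSU_2$ and $\PSU(1,1)$, and Figure~\ref{fig:mult} shows a multiple-root example where only the $\PSU_2$ deformation exists). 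So realness of the tangent class does not by itself ``place $C$ inside $\XG{M}$.'' What actually does the work is that Heusener--Porti's cocycle $d_++d_-$ is valued in $\mathfrak{su}(1,1)_{\rho_\alpha}$ and their proof integrates it to an analytic path in $R_{\PSU(1,1)}(M)$, with $\PSU(1,1)$ a conjugate of $G$; this also hands you the analytic path in the representation variety directly, sidestepping the fact that Lemma~\ref{lem:pathlift} only provides continuous (not analytic) lifts of paths of characters.

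Part (\ref{item:tracevaries}) has a genuine gap: you assert that the tangent cocycle ``restricts nontrivially to $\pi_1(\partial M)$,'' but that assertion \emph{is} the content of (\ref{item:tracevaries}), not a fact you can read off from \cite{HeusenerPorti2005}. The paper proves it by moving from $\rho_\alpha$ (which is not a smooth point, lying on two components) to the nonabelian reducible representation $\rho^+$ of \cite[\S 5]{HeusenerPorti2005}, identifying the tangent space of the irreducible component with $\twisted{H}{1}{M}\cong\C$ and that of $X(\partial M)$ with $\twisted{H}{1}{\partial M}\cong\C^2$, and then showing the restriction $\twisted{H}{1}{M}\to\twisted{H}{1}{\partial M}$ is injective via the computation $\twisted{H}{0}{M}=0$, Poincar\'e duality, and the long exact sequence of the pair $(M,\partial M)$. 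Without some such ``half lives, half dies'' argument, the first-order variation of every $\tr^2_\gamma$ for $\gamma\in\pi_1(\partial M)$ could a priori vanish, and your geometric claim that the peripheral rotation angle changes to first order is unsupported.
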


\begin{figure}
  \begin{center}
    \begin{tikzpicture}[line cap=round, font=\small]
  \coordinate (o) at (0,0);
  \coordinate (a) at (10:5);
  \coordinate (b) at (-10:5);
  \filldraw[color=blue!10] (o) -- (a) 
      arc [start angle=10, delta angle=-20, radius=5]
      -- cycle; 
  \begin{scope}[line width=1pt, color=blue!30]
    \draw (o) -- (a);
    \draw (o) -- (b);
  \end{scope}
  \node at (4.5, 0.4) {$\cC$};

  \begin{scope}[line width=1.5pt, color=axesgray]
    \draw[->] (0, -1.5) -- (0, 2.0) node[left, color=black] {$\lambda^*$};
    \draw[->] (-1.0, 0) -- (6, 0) node[right, color=black] {$\mu^*$}; 
    \draw[dashed] (4, -1.52) -- (4, 2.0) 
        node[right, color=black] {$\mu^* = 1$}; 
  \end{scope}
  
  \draw[color=locus, line width=2pt] 
      (1, 0) .. controls (0.92, 0.4) and (0.8, 0.57) .. (0.7, 0.7);          
  \draw[color=locus, line width=2pt, shift={(4,0)}, rotate=180] 
      (1, 0) .. controls (0.92, 0.4) and (0.8, 0.57) .. (0.7, 0.7);    
  \node at (3.45, -0.95) {$B$};
  \node at (0.6, 0.92) {$A$};

  \begin{scope}[color=black, fill=galoisgeom, radius=2.2pt]
    \filldraw (1, 0)  circle;
    \filldraw (3, 0)  circle;
  \end{scope}
\end{tikzpicture}
  \end{center}

  \vspace{-0.2cm}

  \caption{Here is an outline of the proof of
    Theorem~\ref{MainTheoremOneOne}.  From the simple root $\xi$ of
    $\Delta_M$, we use \cite{HeusenerPorti2005} to produce an arc
    $A$ in $\TEL{M}$ leaving the horizontal axis at a
    corresponding Alexander point. Using the action of $\SymTEL{M}$,
    we can assume the arc $A$ lies in the strip
    $0 \leq x \leq 1$ as shown.  The element of $\SymTEL{M}$ which
    is $\pi$-rotation about $(1/2, 0)$ means there will be a second
    arc $B$ in this strip on the opposite side of the
    horizontal axis from $A$.  This allows us to find a cone $\cC$
    whose lines through the origin meet $\TEL{M}$ in a
    point which is neither parabolic or ideal.  The theorem will then
    follow from Lemma~\ref{lemma:key}.} \label{fig:arcstocone}
\end{figure}
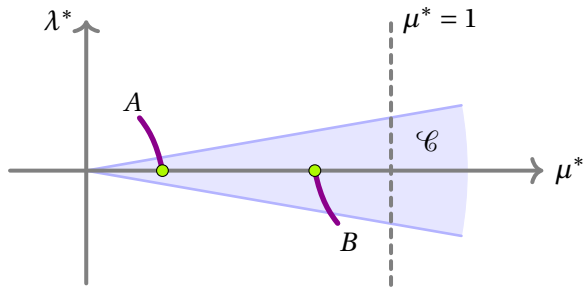

\begin{proof}
Except for part (\ref{item:tracevaries}), the lemma follows
straightforwardly from the statement of Proposition 10.3 of
\cite{HeusenerPorti2005} and Lemma~\ref{lem:pathlift} of our paper.
However, it is even easier to derive claims
(\ref{item:character}--\ref{item:charvaries}) directly from the
discussion in Section 10 of \cite{HeusenerPorti2005} and we take that
approach.  Throughout, we will follow the notation of
\cite{HeusenerPorti2005} closely.  Fix a generator $h$ of $H$, and let
$\alpha \maps \pi_1(M) \to \C^\times$ be the homomorphism 
which factors through the homomorphism $H \to \C^\times$ that sends $h$ to $\xi$.
Consider the associated diagonal
representation $\rho_\alpha \maps \pi_1(M) \to \GC$ given by
\[
\rho_\alpha(\gamma) = 
    \pm
    \twobytwomatrix{\alpha^{1/2}(\gamma)}{0}{0}{\left(\alpha^{1/2}(\gamma)\right)^{-1}}
    \mtext{where $\alpha^{1/2}(\gamma)$ is either square root of $\alpha(\gamma)$.}
\]
Now the image of $\rho_\alpha$ is contained in the following subgroup
of $\GC$
\[
\PSU(1,1) = \setdef{\twobytwomatrix{a}{b}{\bbar}{\abar}}{\mbox{$a, b \in \C$
  with $\abs{a}^2 - \abs{b}^2 = 1$}}
\]
which is a conjugate of $\G$ in $\GC$ as it corresponds to the
M\"obius transformations that stabilize the unit disc $D$ in
$\C \subset \projsp^1(\C)$.  

The proof of Proposition~10.3 in \cite{HeusenerPorti2005} shows that
the cocycle defined there
\[
d_+ + d_- \in H^1\left(\pi_1(M); \  \su(1,1)_{\rho_\alpha}\right)
\]
can be integrated to an analytic path
$\rho_t \maps [0, 1] \to R_{\PSU(1,1)}(M)$ with $\rho_0 = \rho_\alpha$
and $\rho_t$ irreducible over $\GC$ for $t > 0$, which gives
(\ref{item:irreducible}).  Note that $\rho_\alpha$ stabilizes the
center of $D$ and acts on the tangent space there via $\alpha$, which
gives (\ref{item:character}).  Next, claim (\ref{item:charvaries})
that $[\rho_t]$ is nonconstant follows from (\ref{item:irreducible}),
since, over $\GC$, a reducible representation cannot have the same
character as an irreducible representation.

Finally, we tackle claim (\ref{item:tracevaries}), whose proof is more
involved; please note that claim (\ref{item:tracevaries}) is not
actually used in this paper and so you can safely skip it.  By
Theorem~1.3 of \cite{HeusenerPorti2005}, the character
$\chi_\alpha = [\rho_\alpha]$ is contained in precisely two
irreducible components of $X(M)$, both of which are (complex) curves:
one consisting solely of characters of abelian representations and the
other, which we will call $X$, whose characters generically come from
representations that are irreducible over $\GC$.  Of course, our path
$[\rho_t]$ lies in $X$.  To study $X$ near $\chi_\alpha$, we move away
from $\rho_\alpha$ to the representation $\rho^+ \in R(M)$ constructed
in \cite[\S 5]{HeusenerPorti2005}.  The representation $\rho^+$ is
also reducible with character $\chi_\alpha$ but has nonabelian image.
Proposition 7.6 of \cite{HeusenerPorti2005} gives that $\rho^+$ is a
smooth point of $R(M)$ of local dimension 4.  Let $\slrhoplus$ denote
the Lie algebra of $\GC$ as a $\pi_1(M)$-module via the action
$\Ad \circ \rho^+$.  The proof of Proposition 7.6 of
\cite{HeusenerPorti2005} shows that the Zariski tangent space of
$R(M)$ at $\rho^+$ can be identified with the space of cocycles
$\twisted{Z}{1}{M}$. (Unlike \cite{HeusenerPorti2005}, we are assuming
that $M$ is irreducible and consequently aspherical, and so do not
distinguish between cohomology of $M$ and of $\pi_1(M)$.)  As the
tangent space to the orbit of $\rho^+$ is the space of coboundaries
$\twisted{B}{1}{M}$, we can identify the Zariski tangent space of $X$
at $\chi_\alpha$ with $\twisted{H}{1}{M}$, which is $\C$ by Corollary
5.4 of \cite{HeusenerPorti2005}.  As the restriction
$\rho^+ \circ \inc$ in $R(\partial M)$ is nontrivial, the proof of
Lemma~7.4 of \cite{HeusenerPorti2005} gives that $\rho^+ \circ \inc$
is a smooth point of $R(\partial M)$, and so again we can identify
the Zariski tangent space of $X(\partial M)$ at $[\rho^+ \circ \inc]$
with $\twisted{H}{1}{\partial M} \cong \C^2$.  The claim
(\ref{item:tracevaries}) boils down to showing that
\begin{equation}\label{eq:cohomap}
\inc^* \maps  \twisted{H}{1}{M} \to \twisted{H}{1}{\partial M} 
\end{equation}
is injective, since coordinates on $X(\partial M)$ are precisely the
functions $\tr^2_\gamma$ for $\gamma \in \pi_1(M)$.

To understand the map in (\ref{eq:cohomap}), start by calculating that
the $0$\hyp cohomologies, or equivalently the $\pi_1(M)$-invariant
subspaces of $\slrhoplus$, are
\[
\twisted{H}{0}{\partial M} \cong \C \mtext{and} \twisted{H}{0}{M}
\cong 0
\]  
As $M$ has Euler characteristic $0$, this forces $\twisted{H}{2}{M}
\cong \C$, and so by duality we have $\twisted{H}{1}{M, \partial M}
\cong \C$ as well.  Suppressing the coefficients, the long exact
sequence of the pair includes
\[
\begin{tikzcd}
  H^1(\partial M) & \arrow{l}[above]{\inc^*} H^1(M)
  & \arrow{l} H^1(M, \partial M) & 
  \arrow{l}[above]{\delta}  H^0(\partial M) & \arrow{l} H^0(M) \\[-4.5ex]
  \C^2 & \C & \C & \C & 0
\end{tikzcd}
\]
which forces $\inc^*$ at left to be injective, as claimed.  This
establishes (\ref{item:tracevaries}) and hence the lemma. 
\end{proof}

\begin{proof}[Proof of Theorem~\ref{MainTheoremOneOne}]
We will use the coordinate system described in Section
\ref{subsection: coordinates} to identify $H^1(\partial M; \R)$ with
$\R^2$.  We will show that there exists a cone $\cC$ in $\R^2$
containing the positive part of the horizontal axis in its interior
such that every line contained in $\cC$ meets the subset $\TEL{M}$ in a
point which is neither ideal nor parabolic.  The theorem then follows
directly from Lemma \ref{lemma:key} once we invoke \cite[Theorem
1.2]{GordonLuecke1996} to know that all but at most three Dehn
fillings on $M$ are irreducible.

We claim it suffices to produce a path $A$ in $\TEL{M}$ which begins
at a point on the horizontal axis, and not at the origin, such that
the image of $A$ is not completely contained in the horizontal
axis. If the image of $A$ contains points of either the upper or lower
open half-plane, then the symmetries imply that there also exists a
path whose image contains points of the other half-plane; compare
Figure~\ref{fig:arcstocone}.  Thus the images of the two paths will
meet every line in some cone $\cC$.  By Theorem~\ref{thm:structure},
after shrinking these paths if necessary, we may assume that they
contain no ideal points.  Since parabolic points occur only at
integer lattice points, we may also assume that these paths contain no
parabolic points in their interior.

Let $\xi$ be a simple root of $\Delta_M$ that lies on the unit
circle.  Note that $\xi$ is different from $1$ since, as $M$ is a
$\Q$-homology solid torus, the value $|\Delta_M(1)|$ is the order of the
torsion subgroup in $H_1(M; \Z)$ and hence positive. Let $\rho_t$ be
the associated path in $\RG{M}$ given by
Lemma~\ref{lem:abeliandeforms}.  Now $\rho_0$ factors through the
free abelianization $H$ of $\pi_1(M)$, which is just $\Z$, and so it
is trivial to lift $\rho_0$ to $\rhotil_0 \maps \pi_1(M) \to \Gtil$
that still factors through $H$.  As $\lambda$ is $0$ in $H$, we have
$\trans\left(\rhotil_0\left(\lambda\right)\right) = 0$.  As $\xi$ is
not $1$, we have $\trans(\rhotil_0(\mu)) \neq 0$.  As noted in
Section~\ref{subsection: coordinates}, the index of $\pair{\inc_*(\mu)}$
in $H$ is the order $k$ of $\inc_*(\lambda)$ in $H_1(M; \Z)$.  Thus,
using Section~\ref{sec:paramlifts}, we adjust $\rhotil_0$ so that
$\trans(\rhotil_0(\mu))$ is in $(0, k]$.  In particular, $\rhotil_0$ gives
a point $(x, 0) \in \TEL{M}$ with $x > 0$ in our coordinates on
$H^1(\partial M; \R)$.

As discussed in Section~\ref{sec:euler}, the Euler class is the
complete obstruction to lifting a representation to $\Gtil$ and is
constant on connected components of $\RG{M}$.  Hence, as $\rho_0$
lifts to $\rhotil_0$, we can extend this to a continuous path
$\rhotil_t \maps [0, 1] \to \RGtil{M}$ lifting the original $\rho_t$.
Because $\xi^k \neq 1$, we have
$\tr^2_\mu(\rhotil_0) = \xi^k + 2 + \xi^{-k} < 4$, so there exists
$\epsilon > 0$ such that $\tr^2_\mu(\rho_t) < 4$ for
$t \in [0,\epsilon]$.  This means that the representation $\rho_t$
sends $\mu$ to an elliptic element and, since $\lambda$ commutes with
$\mu$, it must also send $\lambda$ to an elliptic or trivial element.
By replacing $\rho_t$ by its restriction to a subinterval of positive
length, we have that $\rho_t$ is a path in $\RGPE{M}$ and that
$\rhotil_t$ is a path in $\RGtilPE{M}$.

We now build our path $A$ by composing $\rhotil_t$ with
$\trans \circ \inc^* \maps \RGtilPE{M} \to \TEL{M}$.  By
Lemma~\ref{lem:abeliandeforms}(\ref{item:charvaries}), we know
$[\rho_t]$ is a nonconstant path in $X(M)$ and hence $\rhotil_t$ is a
nonconstant path in $\RGtilPE{M}$. However, we must still prove that
$A$ is not contained in the horizontal axis, i.e.~that
$\trans(\rhotil_t(\lambda))$ is not the zero function in $t$.  If it
were, then since $\rho_t(\lambda)$ is always elliptic or trivial, we
would have that $\rho_t(\lambda) = 1$ for all $t$; in particular, all
the $\rho_t$ factor through $\pi_1\left(M(0)\right)$ and so the path
$[\rho_t]$ lies in $X\!\left(M(0)\right) \subset X(M)$.  Thus the
$[\rho_t]$ are in an irreducible component $Z$ of
$X\!\left(M(0)\right)$ of complex dimension at least 1.  By
Lemma~\ref{lem:abeliandeforms}(\ref{item:irreducible}), the $\rho_t$
are irreducible for $t > 0$, and thus $Z$ is a component of
$X\!\left(M(0)\right)$ of positive dimension which contains an
irreducible character.  This contradicts our hypothesis that $M$ is
longitudinally rigid, and completes the proof of the theorem.
\end{proof}

\begin{remark}
  For general $\Q$-homology solid tori, there can be reducible
  representations that deform to irreducible representations but
  \emph{do not} come from roots of the Alexander polynomial; rather,
  they correspond to roots of certain \emph{twisted} Alexander
  invariants as described in \cite{HeusenerPorti2005}.  However, it
  would not help to consider such representations in the context of
  Theorem~\ref{MainTheoremOneOne}: as we now explain, the additional
  representations never lift to $\Gtil$ and hence are of no interest
  to us here.  Specifically, consider a representation
  $\rho \maps \pi_1(M) \to S \leq G$ where $S = \PSO_2 \cong S^1$; in
  the proof of Theorem~\ref{MainTheoremOneOne}, we considered such
  $\rho$ that factor through $\Honefree$ and deform to irreducible
  representations in $\RG{M}$.  More generally, we could consider any
  deformable $\rho \maps \pi_1(M) \to S \leq G$.  However, the
  preimage of $S$ in $\Gtil$ is $\R$, which is abelian and
  torsion-free; thus if $\rho$ lifts to
  $\rhotil \maps \pi_1(M) \to \Gtil$, the lift $\rhotil$ must factor
  through $\Honefree$, and so we are back in the case considered in
  Theorem~\ref{MainTheoremOneOne}.
\end{remark}

\section{Real embeddings of trace fields and orderability}
\label{sec:realtraces}

This section gives the proof of Theorem~\ref{MainTheoremTwo}, whose
statement we recall below after giving some needed background.

\subsection{Trace fields and Galois conjugate representations}  
\label{sec:galois}

Let $M$ be a compact orientable \3-manifold whose boundary is a torus.
The trace field of a representation $\rho \maps \pi_1(M) \to \GC$ is
the subfield of $\C$ generated over $\Q$ by the traces of all
$\rho(\gamma)$ for $\gamma \in \pi_1(M)$; this is well-defined even
though the trace of each $\rho(\gamma)$ only makes sense up to sign.
Of course, the trace field depends only on the conjugacy class of
$\rho$.  If $\rhohyp$ is a holonomy representation of a finite-volume
hyperbolic structure on the interior of $M$, by local rigidity its
trace field $F$ is a number field, that is, a finite extension of $\Q$
\cite[Theorem 3.1.2]{MaclachlanReid2003}.  In particular, $F$ is
contained in the subfield $\QQbar \subset \C$ of all algebraic
numbers.

As the hyperbolic structure has a cusp, we can conjugate $\rhohyp$ so
that its image lies in $\PSL{2}{F}$
\cite[Theorem~3.3.8]{MaclachlanReid2003}.  Given an embedding
$\sigma \maps F \to \C$, which must have image contained in $\QQbar$,
we get a \emph{Galois conjugate representation}
$\rho \maps \pi_1(M) \to \GC$ by composing $\rhohyp$ with the induced
map $\PSL{2}{F} \to \PSL{2}{\left(\sigma(F)\right)}$.  As irreducible
representations into $\GC$ are determined by their characters, up to
conjugacy in $\GC$ this $\rho$ depends only on $\sigma$ and not on how
we conjugated $\rhohyp$ to lie in $\PSL{2}{F}$.

Here is the statement that this section is devoted to proving:
\maintheoremtwo*
\noindent
The proof relies on the following three lemmas, the third of which was
suggested to us by Ian Agol and David Futer. 

\begin{lemma}\label{lem:transodd}
  Suppose $M$ is a hyperbolic $\Z$-homology solid torus, with
  homological longitude $\lambda \in \pi_1(\partial M)$.  Suppose
  the trace field $F$ of $M$ has a real embedding
  $\sigma \maps F \to \R$, and let $\rho \maps \pi_1(M) \to \G$ be the
  corresponding Galois conjugate of the holonomy representation.  If
  $\rhotil \maps \pi_1(M) \to \Gtil$ is any lift of $\rho$, then
  $\trans\big(\rhotil(\lambda)\big)$ is an odd integer.
\end{lemma}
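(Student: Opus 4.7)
The plan is to exploit the tower $\Gtil \xrightarrow{q} \SL{2}{\R} \xrightarrow{\pi} G$, constructing a lift $\tilde\rho \maps \pi_1(M) \to \SL{2}{\R}$ of $\rho$ with $\tr(\tilde\rho(\lambda)) = -2$ and then invoking a parity correspondence relating integer translation numbers in $\Gtil$ to traces $\pm 2$ in $\SL{2}{\R}$.

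To set up the parity correspondence, I will use that $\Gtil$ is simultaneously the universal cover of $G$ and of $\SL{2}{\R}$, so $q$ is a central extension with kernel $\langle s^2 \rangle$ and the central generator $s$ (which satisfies $\trans(s) = 1$) projects to $-I \in \SL{2}{\R}$. Therefore, for any $\gtil \in \Gtil$ lying over a parabolic or trivial element of $G$, the translation number $\trans(\gtil)$ is an integer, and replacing $\gtil$ by $\gtil s$ shifts $\trans$ by $1$ while negating $q(\gtil)$. A direct check, lifting the continuous path of unipotents $t \mapsto \bigl(\begin{smallmatrix}1 & t \\ 0 & 1\end{smallmatrix}\bigr)$ to $\Gtil$ starting at $1$, will show that the endpoint has translation number $0$ and $q$-image of trace $+2$; combined with the shift-by-$s$ observation, this will give that $\trans(\gtil)$ is even precisely when $\tr(q(\gtil)) = +2$, and odd precisely when $\tr(q(\gtil)) = -2$.

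To construct $\tilde\rho$, I will start from \cite[Cor.~2.4]{Calegari2006}, which lifts $\rhohyp$ to some $\tilde\rho_0 \maps \pi_1(M) \to \SL{2}{\C}$ with $\tr(\tilde\rho_0(\lambda)) = -2$; after conjugation in $\SL{2}{\C}$, its image lies in $\SL{2}{F'}$ for some number field $F' \supseteq F$. I extend $\sigma$ to an embedding $\tilde\sigma \maps F' \hookrightarrow \C$ (possibly not real) and set $\tilde\rho' = \tilde\sigma_* \tilde\rho_0$, so that $\tr(\tilde\rho'(\lambda)) = \tilde\sigma(-2) = -2$. The trace-squared functions evaluated on $\rhohyp$ take values in $F$, where $\tilde\sigma$ agrees with $\sigma$, so the $\PSL{2}{\C}$-projection of $\tilde\rho'$ and $\rho$ share the same character in $X(M)$. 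Irreducibility of $\rho$ then lets me conjugate $\tilde\rho'$ in $\SL{2}{\C}$ so that $\pi \circ \tilde\rho' = \rho$ on the nose; since $\pi^{-1}(\PSL{2}{\R}) = \SL{2}{\R}$ inside $\SL{2}{\C}$, the conjugated representation has image in $\SL{2}{\R}$ and, by conjugation-invariance of the trace, still sends $\lambda$ to an element of trace $-2$. This is my $\tilde\rho$.

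Finally, I lift $\tilde\rho$ to $\rhotil \maps \pi_1(M) \to \Gtil$ through $q$; the obstruction lies in $H^2(\pi_1(M); \Z) = H^2(M; \Z)$, which vanishes because $M$ is a $\Z$-homology solid torus. Then $q(\rhotil(\lambda)) = \tilde\rho(\lambda)$ has trace $-2$, and the parity correspondence yields that $\trans(\rhotil(\lambda))$ is odd. For an arbitrary lift of $\rho$ to $\Gtil$, any two lifts differ by $\phi \in H^1(M; \Z)$ which shifts translation numbers by $\phi$; since $\lambda$ is null-homologous in $M$, $\phi(\lambda) = 0$, and the parity is independent of the choice of lift. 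The main obstacle is the field-theoretic subtlety in the middle step: Calegari's lift lives over an extension $F'/F$ that need not carry a real extension of $\sigma$, so one cannot directly obtain an $\SL{2}{\R}$-valued representation by Galois-conjugating a real trace. The workaround uses a possibly complex extension of $\sigma$ and then recovers real-valuedness through the identification $\pi^{-1}(\PSL{2}{\R}) = \SL{2}{\R}$.
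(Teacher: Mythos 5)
Your proposal is correct and follows essentially the same route as the paper: Calegari's result that a lift of $\rhohyp$ to $\SL{2}{\C}$ has $\tr(\lambda)=-2$, transported by the Galois embedding to an $\SL{2}{\R}$-valued lift of $\rho$, combined with the observation that for elements of $\Gtil$ lying over parabolic or trivial elements of $G$ the parity of $\trans$ matches the sign of the trace of the image in $\SL{2}{\R}$ (the paper proves this by noting $\trans$ and $\tr\circ q$ are continuous integer-valued functions on the parabolic-or-central locus, each of whose path components contains a central element---the same continuity argument as your lifted path of unipotents, which you should run for all $t\in\R$ so as to reach both conjugacy classes of parabolics). Your extra care with the field of definition of the $\SL{2}{\C}$-lift (passing to $F'\supseteq F$, extending $\sigma$, and recovering real-valuedness from the character) is a harmless refinement of the step the paper performs by applying $\sigma$ directly to an $\SL{2}{F}$-valued lift.
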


\begin{lemma}\label{lem:galois}
  Suppose $M$ is an orientable 1-cusped hyperbolic \3-manifold whose
  trace field has a real embedding.  Then there exists an arc $c$ in
  $\RG{M}$ and a representation $\rho$ in its interior such
  that
  \begin{enumerate}
    \item The representation $\rho$ is a Galois conjugate of
      a holonomy representation of the hyperbolic structure on $M$.  
    \item For any slope $\gamma \in \pi_1(\partial M)$, the arc $c$ is
      parameterized near $\rho$ by $\tr^2_\gamma$.
  \end{enumerate}
\end{lemma}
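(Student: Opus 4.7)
The plan is to construct $c$ by producing a transverse real arc on a Galois conjugate of the canonical component of the character variety and then lifting it to the representation variety. After conjugating as in Section~\ref{sec:galois}, we may take the hyperbolic holonomy in the form $\rhohyp \maps \pi_1(M) \to \PSL{2}{F}$; applying the real embedding $\sigma \maps F \to \R$ coordinate-wise then yields the Galois conjugate representation $\rho \maps \pi_1(M) \to \PSL{2}{\R} = G$, which is irreducible because Galois conjugation preserves irreducibility. Since $\rho$ sends hyperbolic elements of $\pi_1(M)$ to elements with $\tr^2 > 4$, Lemma~\ref{lem:real_red} gives $[\rho] \notin X_K(M)$.

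Next, I would invoke Thurston's hyperbolic Dehn surgery theorem: the canonical component $X_0 \subset X(M)$ containing $[\rhohyp]$ is a complex affine curve, smooth at $[\rhohyp]$. Parameterizing a two-fold cover of $X_0$ locally by $u$, the log-eigenvalue of the meridian, the longitude has log-eigenvalue $v = \tau u + O(u^3)$, where $\tau$ is the complex cusp shape. For any primitive class $\gamma = p\mu + q\lambda$ in $\pi_1(\partial M)$, a direct computation gives
\[
\tr^2_\gamma - 4 \;=\; 4\sinh^2(pu + qv) \;=\; 4(p + q\tau)^2\, u^2 + O(u^4).
\]
Since $\tau$ has positive imaginary part, $p + q\tau \neq 0$, so $\tr^2_\gamma - 4$ vanishes to order exactly $2$ in $u$ and is therefore a local parameter on $X_0$ itself (whose intrinsic parameter is $u^2$) at $[\rhohyp]$. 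Galois conjugation by $\sigma$ transports this picture to an irreducible component $X_\sigma \subset X(M)$, again a complex affine curve smooth at $[\rho]$, on which $\tr^2_\gamma$ remains a local parameter at $[\rho]$ for every slope $\gamma$.

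Finally, $[\rho]$ is a smooth real point of $X(M)$ lying only on $X_\sigma$, so complex conjugation on $X(M)$ preserves $X_\sigma$ and it is defined over $\R$. Proposition~\ref{prop:real2} then yields a classical neighborhood $\bar c$ of $[\rho]$ in $(X_\sigma)_\R$ that is a smooth arc on which $\tr^2_\gamma$ remains a local diffeomorphism at $[\rho]$ (the nonzero complex differential restricts nontrivially to the real tangent line since everything is defined over $\R$). The closedness of $X_K(M)$ in $X_\R(M)$ (Lemma~\ref{lem:closed}), together with $[\rho] \notin X_K(M)$, lets us shrink $\bar c$ so that it lies in $X_G(M)$; Lemma~\ref{lem:pathlift} then lifts $\bar c$ to an arc $c \subset R_G(M)$ passing through $\rho$, with $\rho$ interior to $c$ after one final shrinking. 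The main obstacle is confirming that $\tr^2_\gamma$ parameterizes $c$ near $\rho$ for \emph{every} slope; this reduces to the non-vanishing of $p + q\tau_\sigma$, where $\tau_\sigma = \sigma(\tau)$ is the Galois-conjugate cusp shape, and follows from the irrationality of $\tau_\sigma$ (it is algebraic but not rational).
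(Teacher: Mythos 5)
Your proposal follows the same skeleton as the paper's proof: conjugate $\rhohyp$ into $\PSL{2}{F}$, apply the real embedding to get $\rho\in\RG{M}$, establish that $\tr^2_\gamma$ is a local parameter on the conjugated canonical curve at the smooth point $[\rho]$, extract the real arc via Proposition~\ref{prop:real2}, and lift with Lemma~\ref{lem:pathlift}. The one genuinely different ingredient is how the local-parameter property is transported from $[\rhohyp]$ to $[\rho]$. The paper cites Porti for the statement at $[\rhohyp]$ and then observes that ``$\tr^2_\gamma$ is a local parameter at a smooth point of a curve'' is a purely algebraic condition, hence $\Gal(\QQbar/\Q)$-invariant; no cusp shapes appear. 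You instead Galois-conjugate the Neumann--Zagier expansion and reduce to $p+q\tau_\sigma\neq 0$. That is a legitimate alternative and makes the leading coefficient explicit, but to be airtight you must justify transporting an \emph{analytic} expansion (a branch of $\log$, an $O(u^3)$ error term) under a field automorphism: phrase it in the completed local ring of the curve at the algebraic point $[\rhohyp]$, using the eigenvalue minus one as the algebraic parameter on the double cover, so that $\sigma$ acts on honest Taylor coefficients. Your irrationality claim for $\tau_\sigma$ is correct, but the reason is not ``algebraic but not rational''---it is that $\sigma$ fixes $\Q$ pointwise, so $\tau_\sigma\in\Q$ would force the cusp shape $\tau$ itself to be rational, contradicting $\tau\notin\R$. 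Your argument that the single component $X_\sigma$ is invariant under complex conjugation (because it is the unique component through the real smooth point $[\rho]$) is a valid substitute for the paper's device of working with the $\Q$-irreducible component.

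One sentence is simply false: Galois conjugation does \emph{not} send hyperbolic elements to elements with $\tr^2>4$. The type of an isometry is not a Galois-invariant; indeed the entire point of Theorem~\ref{MainTheoremTwo} is that peripheral parabolics become elliptic under $\sigma$, and non-peripheral hyperbolics can become elliptic as well. The conclusion you need, $[\rho]\notin X_K(M)$, is nevertheless true, but for a different reason: $\rho$ is irreducible (irreducibility is an algebraic condition defined over $\Q$, hence Galois-invariant) and $[\rho]\in\XG{M}$ by construction, so Lemma~\ref{lem:real_red}, which says $X_K(M)\cap\XG{M}=X_S(M)$ consists of reducible characters, excludes $[\rho]$ from $X_K(M)$. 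With that repair---and granting the step, which the paper's own proof also leaves implicit, that near an irreducible $\G$-character the nearby irreducible real characters realized in $\PGL{2}{\R}$ are in fact realized in $\PSL{2}{\R}$ (this follows from the path-lifting argument in the last paragraph of the proof of Lemma~\ref{lem:closed})---your argument goes through.
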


\begin{lemma}[(Agol and Futer)]\label{lem:iandave}
  Suppose $\TEL{M}$ contains an arc $A$ that is not horizontal,
  i.e.~that has points with different vertical coordinates.  Then
  there exists an $a > 0$ so that the line $L_r$ meets $\TEL{M}$ for
  all $r$ in $(-a, a)$.
\end{lemma}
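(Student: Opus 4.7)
My plan is to push the arc $A$ far from the origin using the horizontal translation subgroup of $\SymTEL{M}$: when $|n|$ is large, the translate $A + (nk, 0)$ sits in a tiny angular wedge around the positive or negative $x$-axis, and the wedges spanned by consecutive translates overlap to cover a whole punctured neighborhood of slope zero. The horizontal axis $L_0 \subset \TEL{M}$ furnished by Theorem~\ref{thm:structure} then handles $r = 0$.

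More concretely, parameterize $A$ continuously as $t \mapsto (x(t), y(t))$ for $t \in [0,1]$. Since $A$ is non-horizontal and connected, the continuous function $y(t)$ attains all values in some interval $[y_1, y_2]$ with $y_1 < y_2$. By invariance of $\TEL{M}$ under $\SymTEL{M}$, the translate $A_n := A + (nk, 0)$ lies in $\TEL{M}$ for each $n \in \Z$. For each $|n|$ large enough that $x(t) + nk$ is nonzero throughout $[0,1]$, define the continuous function
\[
\phi_n(t) = -\frac{y(t)}{x(t) + nk};
\]
the point $(x(t) + nk, y(t)) \in A_n$ lies on $L_r$ exactly when $r = \phi_n(t)$. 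So $I_n := \phi_n([0,1])$ is a closed interval in $\R$, and for each $r \in I_n$ the line $L_r$ meets $A_n \subset \TEL{M}$.

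Next I would estimate $I_n$ as $|n| \to \infty$. Since $x(t)$ and $y(t)$ are bounded on $[0,1]$, one has $\phi_n(t) = -y(t)/(nk) + O(1/n^2)$ uniformly in $t$, so $I_n$ is asymptotic to $[-y_2/(nk), -y_1/(nk)]$, an interval of length of order $(y_2 - y_1)/|n|k$. A short explicit computation then shows that for all sufficiently large positive $n$ the intervals $I_n$ and $I_{n+1}$ overlap; hence $\bigcup_{n \geq N_0} I_n$ is a single interval whose closure contains $0$, and symmetrically $\bigcup_{n \leq -N_0} I_n$ is an interval on the opposite side of $0$. Together they cover some $(-a, a) \setminus \{0\}$, and combining this with the fact that $L_0$ is the horizontal axis, hence contained in $\TEL{M}$, yields that $L_r \cap \TEL{M} \neq \emptyset$ for every $r \in (-a, a)$.

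The main obstacle is verifying the overlap of consecutive $I_n$: one must check that the $O(1/n^2)$ error in the linearization of $\phi_n$ does not swamp the $O(1/n)$ length or spacing of the intervals, and that the intervals accumulate on $0$ from the correct side (negative $r$ for $n \to +\infty$ and positive $r$ for $n \to -\infty$, if $A$ lies in an open half-plane). A subcase that short-circuits the overlap argument entirely is when $A$ already crosses the horizontal axis, since then $I_n$ contains $0$ in its interior for all sufficiently large $|n|$ and a single translate of $A$ already meets every small-slope line.
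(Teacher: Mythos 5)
Your argument is correct and is essentially the paper's own proof viewed from the dual perspective: the paper fixes a small slope $r$ and locates a horizontal translate $A+(nk,0)$ whose endpoints straddle $L_r$, while you fix the translate and compute the interval $I_n$ of slopes it subtends, then shingle consecutive $I_n$'s together; both rest on the same ingredients (periodicity of $\TEL{M}$ under translation by $k$, connectedness of the arc, and the horizontal axis $L_0\subset\TEL{M}$ for $r=0$). The overlap estimate you flag as the main obstacle does go through exactly as you sketch, since the gap between consecutive intervals is $-(y_2-y_1)/(nk)+O(1/n^2)<0$ for large $n$.
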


\begin{proof}[Proof of Lemma~\ref{lem:transodd}]
Let $\rhohyp \maps \pi_1(M) \to \PSL{2}{F}$ be a holonomy
representation for the hyperbolic structure on $M$. Let
$\rhohyp' \maps \pi_1(M) \to \SL{2}{F}$ be any lifted representation,
which exists by \cite[Proposition 3.1.1]{CullerShalen1983}.
By Corollary~2.4 of \cite{Calegari2006}, we know
$\tr\big(\rhohyp'(\lambda)\big) = -2$ since $\lambda$ is the
boundary of an orientable spanning surface.  The Galois conjugate
$\rho' = \sigma \circ \rhohyp'$ also has
$\tr\left(\rho'(\lambda)\right)= -2$, and note that $\rho'$ is a lift
of $\rho$ to $\SL{2}{\R}$.  Consider the successive quotients
\[
\begin{tikzcd}
\Gtil \arrow{r}[below]{q}
\arrow[bend left=15, start anchor=30, end anchor=150]{rr}[near end]{p}
 & \SL{2}{\R} \arrow{r} & G
\end{tikzcd}
\]
The lemma will follow immediately from the fact that
$\tr\left(\rho'(\lambda)\right) = -2$ once we show:

\begin{claim}
  Suppose $\gtil$ is a parabolic or central element of $\Gtil$ and
  $\gbar$ is its image in $\SL{2}{\R}$.  Then the parity of
  $\trans(\gtil)$ is odd precisely when $\tr(\gbar) = -2$ rather than
  $+2$.
\end{claim}
To see this, consider the subset $P$ of all parabolic or central
elements of $\Gtil$.  (Figure 1 of \cite{Khoi2003} has a detailed
picture of $P$ as well as the subsets of elliptic and hyperbolic
elements; this picture informs our approach here but is not directly 
used.)  Note that every path component of $P$ contains a central
element; this is because downstairs in $G$ any parabolic element can
be connected to the trivial element by a path all of whose interior
points are parabolic, and paths lift to covering spaces.  The
functions $\trans$ and $\tr \circ q$ are both continuous on $\Gtil$
and are integer valued on $P$.  Hence they are constant on each path
component of $P$, and it suffices to prove the claim for central
elements.  There, note that the center $\big\{s^k\big\}$ of $\Gtil$ maps to
the center $\{\pm I\}$ of $\SL{2}{\R}$ via the unique epimorphism
$\Z \to \Z/2$; hence the $s^k$ which map to $-I$ are exactly those
with $k$ odd.  This proves the claim and hence the lemma.
\end{proof}

\begin{proof}[Proof of Lemma~\ref{lem:galois}]
Let $F$ be the trace field of $M$ and
$\rhohyp \maps \pi_1(M) \to \PSL{2}{F}$ be a holonomy representation.
As $F$ has a real embedding, choose $\sigma \in \Gal(\QQbar/\Q)$ such
that $\sigma(F) \subset \R$, and define $\rho \in \RG{M}$ as
$\sigma \circ \rhohyp$.

Now both $R(M)$ and $X(M)$ are defined over $\Q$, that is, they can be
cut out by polynomials with rational coefficients.  Hence
$\Gal(\QQbar/\Q)$ acts coordinate-wise on their $\QQbar$-points.
Since $[\rhohyp]$ comes from the complete hyperbolic structure on $M$,
it is a smooth point of $\SLcharvar{M}$ where the local dimension is
1, see \cite[Corollaire~3.28]{Porti1997}; in particular, the Zariski
tangent space to $X(M)$ at $[\rhohyp]$ is 1-dimensional.  Let $X$ be
the unique $\Q$-irreducible component $X$ of $\SLcharvar{M}$ that
contains $[\rhohyp]$.  (You can construct $X$ by taking the
$\C$-irreducible component $X_0$ of $\SLcharvar{M}$ containing
$[\rhohyp]$, which must be defined over some number field, and then
taking the union of the $\Gal(\QQbar/\Q)$-orbit of $X_0$.)  Since $X$
is invariant under the $\Gal(\QQbar/\Q)$-action, it contains $[\rho]$
as well as $[\rhohyp]$. Finally, the dimension of $X$ (thought of as
an algebraic set over either $\Q$ or $\C$) is 1.

Again by \cite[Corollaire~3.28]{Porti1997}, for any slope
$\gamma \in \pi_1(\partial M)$, the trace function $\tr^2_\gamma$ is a
local parameter for $X$ on a small classical neighborhood of
$[\rhohyp]$ (the reference \cite{Porti1997} works with $\SL{2}{\C}$
rather than $\GC$ character varieties, but this makes no difference
since near both $2$ and $-2$ in $\C$ the map $z \mapsto z^2$ is
injective).  Since $\sigma$ acts on the $\QQbar$-points of $X$ taking
$[\rhohyp]$ to $[\rho]$, it follow that $[\rho]$ is also a smooth
point of $X$ where again any $\tr^2_\gamma$ is a local parameter for
the nearby $\C$ points; this is because whether a regular function is
a local parameter at a smooth point on the curve $X$ can be expressed
purely algebraically and hence is $\Gal(\QQbar/\Q)$-invariant.

Let $\tau$ denote the action of complex conjugation on $X(M)$ as in
Section~\ref{sec:realpoints}.  As $[\rho]$ is a smooth point of a
$1$-dimensional irreducible component of $X(M)$, by
Proposition~\ref{prop:real2} there is a smooth arc $\cbar$ of real
points in $X_\R(M)$ containing $[\rho]$ in its interior.  Since
$\tr^2_\gamma$ gives a local parameter for $X$ near $\rho$, the arc
$\cbar$ must be locally defined simply by the requirement that
$\tr^2_\gamma$ is real. Thus $\cbar$ is parameterized near $[\rho]$ by
the value of $\tr^2_\gamma$ in the interval
$[4 - \epsilon, 4 + \epsilon]$.  Moreover, by restricting $\epsilon$
we can assume every character in $\cbar$ comes from a
$\GC$-irreducible representation; by Lemma~\ref{lem:real_red}, this
means $C \subset \XG{M}$ since $[\rho] \in \XG{M}$.  By
Lemma~\ref{lem:pathlift}, we can lift $\cbar$ to an arc in $c$ in
$\RG{M}$.  As the function $\tr_\gamma^2$ must also be a local
parameter for $c$, we have proved the lemma.
\end{proof}

\begin{figure}
  \begin{center}
    \begin{tikzpicture}[scale=0.95, line cap=round, font=\small, radius=2.5pt]

  \coordinate (A) at (0, -1);
  \coordinate (B) at (11, -1);
  \coordinate (C) at (0, -2.5);
  \coordinate (D) at (11, -2.5);
  \coordinate (E) at (0, -0.5);
  \coordinate (F) at (11, -3.0);
  \coordinate (G) at (3.9, -1);
  \coordinate (H) at (6, -2.5);

  \node[left] at (A) {$y=y_1$};
  \node[left] at (C) {$y=y_0$};
  \node[right] at (F) {$L_r$};

  \draw[name path=hor0, line width=0.9pt, color=black, loosely dashed] (A) -- (B); 
  \draw[name path=hor1, line width=0.9pt, color=black, loosely dashed] (C) -- (D); 
  \draw[color=blue, line width=1.5pt, name path=L] (E) -- (F);
  \path[name intersections={of=hor0 and L}];
  \coordinate (X) at (intersection-1);
  \path[name intersections={of=hor1 and L}];
  \coordinate (Y) at (intersection-1);

  \draw[color=locus, line width=2pt] (G) .. controls (3, -2.3) and (5, -1.7) .. (H);
  \filldraw (X)  circle node[above=4pt] {$(x_2, y_1)$};
  \filldraw (Y)  circle node[below=4pt] {$(x_3, y_0)$};
  \filldraw (G) circle node[above=4pt] {$(x_1', y_1)$};
  \filldraw (H) circle node[below=4pt] {$(x_0', y_0)$};
  \node at (3.47, -2.0) {$A'$};
\end{tikzpicture}
  \end{center}
  \caption{When the slope of $L_r$ is small enough, it must meet a
    translate $A'$ of $A$.}
  \label{fig:Aprime}
\end{figure}
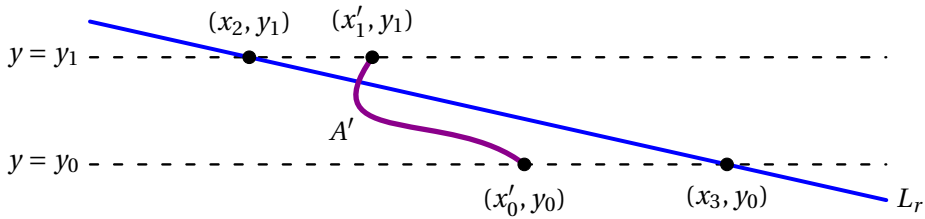

\begin{proof}[Proof of Lemma~\ref{lem:iandave}]
By shortening the arc if necessary, we first arrange that $A$ lies to
one side of the horizontal axis.  As $\SymTEL{M}$ preserves $\TEL{M}$
and contains $\pi$-rotation about the origin, we may assume that $A$
lies below this axis.  We will show that there exists an $a_1 > 0$ so
that $L_r$ meets $\TEL{M}$ for all $r$ in $(0, a_1)$.  Applying the
symmetric argument to the $\pi$-rotation of $A$ about the origin will
give an $a_2 > 0$ so that $L_r$ meets $\TEL{M}$ for all $r$ in
$(-a_2, 0)$; taking $a = \min(a_1, a_2)$ will then give the promised
interval, since the horizontal axis itself is always part of $\TEL{M}$.

As usual, let $k$ be the order of $\iota_*(\lambda)$ in $H_1(M; \Z)$,
so that $\SymTEL{M}$ contains the subgroup of horizontal
translations by multiples of $k$. By shortening $A$ if necessary, we
can label its endpoints as $(x_0, y_0)$ and $(x_1, y_1)$ where
$y_0 < y_1 < 0$ and $\abs{x_1 - x_0} < k$.

We claim that $L_r$ meets $\TEL{M}$ for all $r$ where
\begin{equation}\label{eq:slopebound}
0 < r < \frac{y_1 - y_0}{2k}
\end{equation}
To see this, let $(x_2, y_1)$ be the point where $L_r$ meets the horizontal line
$y = y_1$, and let $(x_3, y_0)$ be the point where $L_r$ meets $y =
y_0$.  Consider the largest integer $n$ so that $x_0 + n k \leq x_3$, and 
let $A' \subset \TEL{M}$ be $A$ translated to the right by $n k$, so
the endpoints of $A'$ are $(x_0 + nk, y_0)$ and $(x_1 + nk, y_1)$.
Set $x'_0 = x_0 + nk$ and $x'_1 = x_1 + nk$.

We now argue that $L_r$ meets $A'$, using Figure~\ref{fig:Aprime} as a
guide.  Since the slope of $L_r$ is $-r$, and since $(y_1 - y_0)/r > 2 k$ by
(\ref{eq:slopebound}), we have $$x_3 - x_2 = (y_1 - y_0)/r > 2 k.$$
Our choice of $n$ guarantees that $x'_0 < x_3$ and $\abs{x_3 - x'_0}  < k$.
We also have $\abs{x_1' - x_0'} = \abs{x_1 - x_0} < k$.  Thus
\[
\abs{x_3 - x_1'} \leq \abs{x_3 - x_0'} + \abs{x_0' - x_1'} < 2 k.
\]
Combining, we conclude that $x_2 < x_1'$.  We also have $x'_0 < x_3$,
so we have shown that the endpoints of $A'$ lie on opposite sides of
$L_r$, as in Figure~\ref{fig:Aprime}.  This implies that $L_r$ must
meet $A'$, completing the proof of the lemma.
\end{proof}

\begin{proof}[Proof of Theorem~\ref{MainTheoremTwo}]
Let $c$ be the arc in $\RG{M}$ given by Lemma~\ref{lem:galois}, and
$\rho$ the Galois conjugate of the holonomy representation which is in
$c$.  As $H^2(M; \Z) = 0$, the Euler class of any representation in
$c$ vanishes, and hence we can lift $c$ to an arc $\ctil$ in
$\RGtil{M}$.  We fix a particular lift by requiring that $\rho$ lifts
to $\rhotil$ with $\trans(\rhotil(\mu)) = 0$.  By
Lemma~\ref{lem:transodd}, we have that $\trans(\rhotil(\lambda)) = k$
is an odd integer, and so $\rhotil$ gives rise to the point $(0, k)$
in $\TEL{M}$.

Since this is true downstairs for $c$, the function $\tr^2_\mu$ is a
local parameter for $\ctil$ where the parameter takes values in
$[4 - \epsilon, 4 + \epsilon]$.  For the subinterval
$[4 - \epsilon, 4]$, the representations on $\ctil$ must lie in
$\RGtilPE{M}$ since they each send $\mu$ to a parabolic or elliptic
element of $\Gtil$. In particular, the translation number of $\mu$ is
a local parameter for this portion of $\ctil$.

Thus we get an arc $A$ in $\TEL{M}$ which starts from $(0, k)$, where
$k$ is the aforementioned odd integer, and is locally parameterized by
the $\mu^*$-coordinate on some small interval $[0, \delta]$.
Moreover, by construction no point on $A$ is an ideal point, and the
only parabolic point on $A$ is $(0, k)$ itself.  Depending on the
sign of $k$, we get one of the two pictures in Figure~\ref{fig:thm2}.

\begin{figure}
  \begin{center}
    \begin{tikzpicture}[line cap=round, font=\small]
  \begin{scope}[line width=1.5pt, color=axesgray]
    \draw[->] (0, -2.2) -- (0, 2.4) node[left, color=black] {$\lambda^*$};
    \draw[->] (-1.5, 0) -- (1.5, 0) node[right, color=black] {$\mu^*$}; 
  \end{scope}
  \draw[color=locus, line width=2pt] 
      (0, 1.7) .. controls (0.25, 1.65) and (0.7, 1.4) .. (0.9, 1.1);
  \node[above, color=black] at (0.7, 1.5) {$A$};
  \begin{scope}[color=black, fill=galoisgeom, radius=3pt]
    \filldraw (0, 1.7)  circle node[below left] {$(0, k)$};
  \end{scope}
  \node at (0, -2.8) {(i) When $k > 0$.};


  \begin{scope}[shift={(7, 0)}]
    \begin{scope}[line width=1.5pt, color=axesgray]
      \draw[->] (0, -2.2) -- (0, 2.4) node[left, color=black] {$\lambda^*$};
      \draw[->] (-1.5, 0) -- (1.5, 0) node[right, color=black] {$\mu^*$}; 
    \end{scope}
    \draw[color=locus, line width=2pt] 
      (0, -1.7) .. controls (0.25, -1.65) and (0.7, -1.4) .. (0.9, -1.1);
      \node[above, color=black] at (0.9, -1.8) {$A$};
      \begin{scope}[color=black, fill=galoisgeom, radius=3pt]
        \filldraw (0, -1.7)  circle node[left=3pt] {$(0, k)$};
      \end{scope}
      \node at (0, -2.8) {(ii) When $k < 0$.};
  \end{scope}
\end{tikzpicture}
  \end{center}
  \caption{The two possibilities for the arc $A$ in the locus $\TEL{M}$
  originating from the image $(0, k)$ of a Galois conjugate of the
  holonomy representation.}
  \label{fig:thm2}
\end{figure}
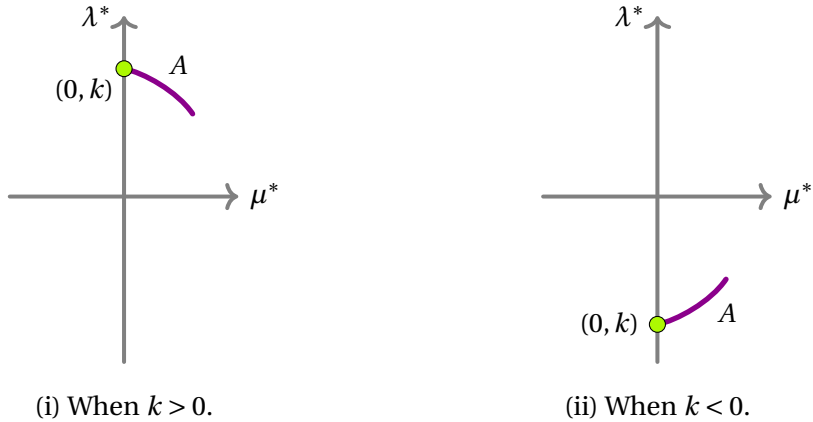
 
To prove conclusion (\ref{item:branched}), consider an $n$-fold
cyclic cover $\Ytil$ of $Y$ branched over $K$.  First, by
the Hyperbolic Dehn Surgery Theorem, the manifold $\Ytil$ is hyperbolic
and hence irreducible for all large $n$.  Moreover, from
Figure~\ref{fig:thm2} it is clear that for all large $n$ the arc $A$
meets the line $\mu^* = 1/n$, so we now get $(a)$ directly from
Lemma~\ref{lemma:branched}. 

For (\ref{item:fill}), for concreteness let us focus on possibility
(ii) in Figure~\ref{fig:thm2}.  Since there are at most three Dehn
fillings on $M$ that are reducible \cite[Theorem
1.2]{GordonLuecke1996}, we can construct an interval $I = (a, \infty)$
where $M(r)$ is irreducible and $L_r$ meets $A$ for all $r \in I$.
The claim now follows immediately from Lemma~\ref{lemma:key}.

Finally, for part (\ref{item:newfill}), by Lemma~\ref{lem:galois} the
arc $c$ in $\RG{M}$ is parameterized near $\rho$ by $\tr_\lambda^2$;
thus the corresponding arc $A$ in $\TEL{M}$ is not horizontal.  Hence
by Lemma~\ref{lem:iandave}, the line $L_r$ meets $\TEL{M}$ for all $r$
in some open interval $(-b, b)$.  Shrinking $b$, we can ensure that
$M(r)$ is irreducible for all $r$ in $(-b, 0) \cup (0, b)$.  Again, 
claim (\ref{item:newfill}) now follows immediately from
Lemma~\ref{lemma:key}, completing the proof of the theorem. 
\end{proof}

\begin{remark}
  \label{rem:allL}
  The hypothesis that $Y$ is a $\Z$-homology \3-sphere is certainly
  necessary for the proof of Theorem~\ref{MainTheoremTwo} to work, and
  it is likely that the conclusion of Theorem~\ref{MainTheoremTwo}
  does not hold in general if one drops this hypothesis.
  Specifically, consider the 1-cusped hyperbolic \3-manifold
  $M = v2503$ which has $H_1(M; \Z) = \Z + \Z/10$ and
  $H^2(M; \Z) = \Z/10$. The trace field here is $\Q$ adjoin a root of
  \[
  x^{10} - 4 x^{8} + 9 x^{6} - 15 x^{4} + 12 x^{2} - 2
  \]
  which has six real embeddings.  However, none of the resulting
  representations $\pi_1(M) \to G$ lift to $\Gtil$, completely
  stymying our technique for constructing orders.

  This $M$ is interesting from the point of view of Floer theory;
  specifically, Lidman and Watson recently gave infinitely many
  $\Q$-homology solid tori which were not fibered and where every
  non-longitudinal Dehn filling is an $L$-space
  \cite{LidmanWatson2014}.  As their examples all have essential
  annuli, they asked \cite[Question 6]{LidmanWatson2014} whether there
  are hyperbolic examples with these same properties; the manifold
  $v2503$ answers that question affirmatively, as we now explain.  We
  will use the homological framing $(\mu, \lambda)$ which corresponds
  to $(0, 1)$ and $(-1, 0)$ in SnapPy's default conventions.  Then
  $M(\mu)$ is the lens space $L(50, 19)$ and $M(\lambda)$ is
  $S^2 \times S^1 \#\ \RP^{3}$.  Using \cite{RasmussenRasmussen2015},
  it is possible to show that \emph{every} non-longitudinal Dehn
  filling on $M$ is an $L$-space, even though it is not a fibered
  \3-manifold as $\Delta_M = 2(t^{4} + t^{3} + t^{2} + t + 1)$.

  Of course, if Conjecture~\ref{BGWconjecture} is true, then every
  Dehn filling on $M$ is \emph{not} orderable (the filling
  $M(\lambda)$ is not orderable as its fundamental group has torsion).
  We checked the 16 examples where the Dehn filling coefficients are at
  most 3, and in each case we were able to show that the corresponding
  Dehn filling was not orderable.  It would be interesting to show
  that this is the case for all Dehn fillings.  
\end{remark}

\section{Open questions}
\label{sec:open}

Our results in this paper and especially the examples in
Section~\ref{sec:menagerie} suggest many interesting questions and
possible avenues for future research; here are some of them:

\begin{enumerate}[label={(\arabic*)}]
\item Find topological hypotheses on a $\Z$-homology solid torus
  which imply that all Dehn surgeries in $(-1, 1)$ are orderable.

\item Find topological hypotheses which give rise to the behavior
  shown in Figure~\ref{fig:o9_04139} where one can use $\TEL{M}$ to
  order all but one Dehn filling on $M$.

\item Do all Berge knots have $\TEL{M}$ of the simple form shown in
  Figure~\ref{fig:m016} and Figure~\ref{fig:v0220}?  What about
  twisted torus knots?  In the latter case, perhaps one can view
  $\TEL{M}$ as some kind of ``perturbation'' of the very simple
  $\TEL{M}$ of the underlying torus knot.

\item In Lemma~\ref{lem:compact} we show that $\TEL{M}$ lives in a
  horizontal strip whose size is bounded.  When $M$ is a $\Z$-homology
  solid torus, our proof shows that the maximum $y$ coordinate of a point
  in $\TEL{M}$ is $2 g - 1$, where $g$ is the Seifert genus of $M$.
  In our examples, this bound is never sharp.  Is this always the
  case, and regardless, is there some way to understand this gap?  

\item Does every polynomial satisfying the conclusion of 
  \cite[Corollary 1.3]{OSLensSpace2005} have a \emph{simple} root on
  the unit circle?  Note that by \cite{KonvalinaMatache2004} such a
  polynomial always has a root on the unit circle.  Experimental
  evidence says yes.

\item Can the longitudinally rigid hypothesis in
  Theorem~\ref{MainTheoremOneOne} be eliminated by placing additional
  conditions on $\Delta_M$?  In the known examples where longitudinal
  rigidity comes into play, the ``bad'' roots of $\Delta_M$ are
  all roots of unity.  

\item Also motived by Theorem~\ref{MainTheoremOneOne}, are there
  closed atoroidal 3-manifolds with $\dim H_1(M; \Q) \leq 1$ which do
  not have few characters?  What if one restricts to $0$-surgery on a
  knot in $S^3$?  

\item There is a Chern-Simons invariant/Seifert volume/Godbillon-Vey
  invariant associated to each representation in $\RG{M}$, see
  \cite{Khoi2003}.  In our usual coordinates on $\TEL{M}$, the
  derivative is really simple, basically
  $x \mathit{dy} - y \mathit{dx}$.  Can this invariant be used to prove
  something interesting about $\TEL{M}$?

\item How can one explore the space of actions of $\pi_1(M)$ on $\R$
  so as to include some which do not arise from $\Gtil$
  representations?  It is natural to try to use some analog of the
  character variety to do this.  What is the appropriate setting for
  this?  Is it possible to draw pictures like those in
  Section~\ref{sec:menagerie} that are built from some larger class of
  maps to $\widetilde{\Homeo^+(\R)}$?

\item Motivated by Remark~\ref{rem:allL}, prove that every Dehn
  filling on $v2503$ is not orderable.

\end{enumerate}

{\RaggedRight 
  \small
  \bibliographystyle{nmd/math}
  \bibliography{\jobname} }
\end{document}